\documentclass[11pt]{article}
\usepackage{caption}
\usepackage{amsthm}
\usepackage{subcaption}
\usepackage{cite}
\usepackage{graphics}
\usepackage{graphicx}
\graphicspath{{Photo/}}
\usepackage{authblk}
\usepackage{amsmath,amssymb,amsfonts}
\usepackage[english]{babel}
\usepackage{hyperref}
\usepackage{array}
\usepackage[rightcaption]{sidecap}
\usepackage{longtable}
\usepackage{enumitem}
\usepackage{scalerel}

\usepackage{graphicx}
\usepackage{color}
\usepackage{algorithm}
\usepackage{algorithmic}
\usepackage{comment}
\numberwithin{equation}{section}
\hsize=2in
\tolerance=10000
\marginparwidth 0pt
\oddsidemargin 0pt
\evensidemargin 0pt
\marginparsep 1pt
\textwidth 6.6in
\textheight 10in
\topmargin -50pt
\voffset -0.5in
\definecolor{darkgreen}{rgb}{0,0.7,0}

\newcommand{\RR}{\mathbb{R}}

\newcommand{\dimEspace}{2}

\newcommand{\norm}[1]{\Vert #1 \Vert}
\newcommand{{\rpetit}}{\tilde{r}}
\newcommand{\rgrand}{r}

\newcommand{\inputall}[1]{}

\usepackage{makeidx}
\makeindex
\newcolumntype{M}[1]{>{\centering\arraybackslash}m{#1}}

\usepackage[english]{babel}

\numberwithin{equation}{section}
\usepackage{color}
\everymath{\displaystyle}

\usepackage{tikz}
\usetikzlibrary{positioning}
\usetikzlibrary{arrows.meta,shadows,positioning}
\usetikzlibrary{shapes.multipart}
\usetikzlibrary{backgrounds}
\usepackage{array}       
\usepackage{booktabs}  
\usepackage{graphicx}

	\newtheorem{remark}{Remark}[section]

	\newtheorem{definition}{Definition}[section]
	\newtheorem{theorem}{Theorem}[section]

	\newtheorem{lemma}{Lemma}[section]
	\newtheorem{proposition}{Proposition}[section]

\title{\textbf{Feedback stabilization for a spatial-dependent  Sterile Insect Technique model with Allee Effect }}
\author{Kala AGBO BIDI\thanks{\textsc{Sorbonne Universit\'{e}, Universit\'{e} Paris Cit\'{e}, CNRS, INRIA, Laboratoire Jacques-Louis Lions, LJLL, EPC, CAGE, F-75005 Paris, France}, \texttt{kala.agbo\_bidi@sorbonne-universite.fr}}\,,\, Lu\'\i s ALMEIDA\thanks{\textsc{Sorbonne Universit\'{e}, Universit\'{e} Paris Cit\'{e}, CNRS, Laboratoire de Probabilités, Statistique et Modélisation, LPSM, F-75005 Paris, France}, \texttt{luis.almeida@cnrs.fr}}\,,\, Jean-Michel CORON\thanks{\textsc{Sorbonne Universit\'{e}, Universit\'{e} Paris Cit\'{e}, CNRS, INRIA, Laboratoire Jacques-Louis Lions, LJLL, EPC, CAGE, F-75005 Paris, France}, \texttt{jean-michel.coron@sorbonne-universite.fr}}}

\date{\empty}

\begin{document}
\maketitle

	\textbf{Abstract.} This work focuses on feedback control strategies for applying the sterile insect technique (SIT) to eliminate pest populations. The presentation is centered on the case of mosquito populations, but most of the results can be extended to other species by adapting the model and selecting appropriate parameter values to describe the reproduction and movement dynamics of the species under consideration.
In our study, we address the spatial distribution of the population in  a two dimensional bounded domain by extending the temporal SIT model analyzed in \cite{bidi_global_2025}, thereby obtaining a reaction-diffusion SIT model. After the analysis of the existence and the uniqueness of the solution of this problem, we construct a feedback law that globally asymptotically stabilizes the extinction equilibrium thus yielding a robust strategy to keep the pest population at very low levels in the long term.
\section{Introduction}
The Sterile Insect Technique (SIT) involves
releasing sterile males of the
pest population to disrupt its reproduction
\cite{barclay1980sterile, gato2021sterile,
vreysen2006sterile}. Therefore, this type of
intervention affects only the target insect
species, offering a significant environmental
advantage compared to the use of pesticides,
which are often harmful to many species, including
humans, and against which pests can quickly develop
resistance. The presentation is centered in the case of mosquito populations, but most of the results can be extended to other species by adapting the model and selecting appropriate parameter values to describe the reproduction and movement dynamics of the species under consideration.

In this paper, we consider a non-empty regular open subset $\Omega$ of $\RR^2$ and Neumann boundary conditions (which correspond to having a zero flux of mosquitoes through the boundary of the domain). To describe the spatial dispersion of the adult mosquitoes, we employ a system of reaction-diffusion equations to model the dynamics of the mosquito population. Biologically, this mathematical model is suitable for  representing, for instance, a population  of mosquitoes established within an island ecosystem or any other domain that is isolated from the external mosquito populations (in the zero flux sense). It is also possible to consider more complex boundary conditions (see for instance, \cite{ABNV2024Robin}).

The primary focus of this work is the well-posedness of such system in $L^1$ and  its control using the release of sterile males in the SIT setting.

The SIT model as an ODE system has been extensively studied from various mathematical perspectives, including the construction of feedback laws (discrete, periodic, and continuous release) relying on optimal control \cite{almeida2022optimal, bliman2019implementation}, monotone systems \cite{2022-Bliman-Dumont-MB, 2024-Bliman-hal}, Lyapunov functions \cite{bidi_global_2025, bidi2024feedback}, and Deep Reinforcement Learning (DRL) \cite{bidi2023reinforcement, 2024-Agbo-Bidi-et-al}. To propose alternative strategies for releasing sterile males, spatial dynamics have also been investigated using PDE models to describe the spatial invasion of mosquitoes. Notably, various control strategies have been developed to address this problem, such as the ``wave blocking'' control strategies presented in \cite{almeidaBarrier2022}, the ``corridor/barrier strategy'' introduced in \cite{anguelov2020use}, and the ``rolling carpet'' control strategies described in \cite{almeidaRolling2023, leculier2023control} (see also the very  recent work \cite{almeidaRolling2025} for the two-dimensional case).
     In  the latter,  the techniques are designed to block or push back the invasion of disease vectors or pests.

     In the present contribution, we also cover the situation where the mosquitoes are already  well established in the ecosystem.  In this setting, we construct a feedback control law to stabilize the spatio-temporal SIT model around the extinction equilibrium. This work improves and extends our previous results on the ODE model \cite{bidi_global_2025, bidi2024feedback}. 
    The global asymptotic stabilization of nonlinear control systems is an important field in control theory, particularly for mathematical models of population dynamics. Over the last century, many control tools have been developed to design stabilizing feedback laws for control systems. Examples include control Lyapunov functions, damping, homogeneity, averaging, backstepping, forwarding, and transverse functions (see \cite[Chapter 12]{2007-Coron-book}).  In the context of the Sterile Insect Technique, the feedback control laws we propose  have the main advantage of reducing the release cost when the population is close to zero. However, in a practical application setting, an important issue arises: the availability of measurements required for the implementation of the feedback laws.
We have addressed the question in our previous works, where we proposed several approaches, including:
the design of observers to estimate the most difficult-to-access data based on easily measurable quantities (see  \cite{bidi2024feedback});
the development of control laws that rely solely on easily available measurements \cite{2024-Agbo-Bidi-et-al,bidi_global_2025}.

The outline of the paper is as follows. In Section \ref{NewODE}, we study the ODE SIT model with an Allee effect term of the form $\eta M / (1 + \eta M)$ which allows us to take into account the probability that an emerging female mosquito finds a male to mate. In Section \ref{Mosquitoes-RD}, we consider the well-posedness of a reaction-diffusion model for the mosquito population life cycle on smooth, bounded, open sets. In Section \ref{see: SIT-RD}, we construct a feedback control law for the SIT reaction-diffusion model and analyze the well-posedness of the resulting closed-loop system.

\section{ODE Model for Mosquito Population with Allee Effect}
\label{NewODE}

We denote by $\beta_E > 0$ the oviposition rate,
$\delta_E, \delta_F, \delta_M > 0$ the death
rates for eggs, females, and wild adult males, respectively,
$\nu_E > 0$ the hatching rate for eggs,
$\nu \in (0,1)$ the probability that a pupa gives
rise to a female ( $(1-\nu)$ is
the probability of giving rise to a male).
Here, $E$ represents the mosquito density in the aquatic phase (which, in this work, with a slight abuse of language, we will often designate by eggs to simplify),
$F$ is the density of adult females,
$M$ is the wild adult male density,
and $K$ is the carrying capacity for the aquatic phase. To take into account the difficulty of the females to find a male with whom to mate (in order to produce fertilized eggs) when the density of the population is low, we introduce an Allee effect given by a Michaelis-Menten type factor $\displaystyle \frac{\eta M}{1+\eta M}$ where $\eta$ is the search efficiency parameter (see \cite{Dennis1989Allee} where the notation is $\theta =1/\eta$). As described in \cite{Dennis1989Allee} (where the previous form is called the rectangular hyperbola function), other forms can be used for representing this Allee effect but, for simplicity, we chose to use this basic one in this work. We will discuss this issue and the probabilistic motivations of the different forms in a future work. Most of the results of this work will be valid for similar expressions like, for instance, the slightly more complicated exponential form described in \cite{Dennis1989Allee} as could be expected from their very similar behavior (as seen is figures 1 and 2 of \cite{Dennis1989Allee}).

The native population dynamics with Allee effect will be given by

\begin{align}
    &\dot{E} = \beta_E F \left(1-\frac{E}{K}\right)\frac{\eta M}{1+\eta M}
 - \big( \nu_E + \delta_E \big) E, \label{eq:S11E1pde} \\
    &\dot{F} = \nu\nu_E E -  \delta_F F, \label{eq:S11E2pde} \\
    &\dot{M} = (1-\nu)\nu_E E - \delta_M M. \label{eq:S11E3pde}
\end{align}

Let us define:
\begin{align}
     & {\rgrand} := 1 + \frac{2\delta_M}{\eta K (1-\nu)\nu_E}
    \bigg(1+ \sqrt{1+\frac{\eta K(1-\nu)\nu_E}{\delta_M}}\bigg), \label{defrgrand}\\
&{\rpetit} := 1 + \frac{2\delta_M}{\eta K (1-\nu)\nu_E}\bigg(1 - \sqrt{1+\frac{\eta K(1-\nu)\nu_E}{\delta_M}}\Bigg),
\label{defrpetit}\\&
 R := \frac{\beta_E \nu \nu_E}{\delta_F (\nu_E + \delta_E)}, \, \text{ (the basic offspring number - see, for instance, \cite{almeida2022optimal})} \label{defR}
\end{align}
\begin{align}
&
E_0:= \frac{K}{2}(1-\frac{1}{R}),\label{defE0}
\\
&
E_1 = \frac{K}{2R}\left( R-1- \sqrt{(R-{\rpetit})(R-{\rgrand})}\right) \text{ if } R > {\rgrand},\label{defE1}
\\
&E_2 = \frac{K}{2R}\left( R-1+ \sqrt{(R-{\rpetit})(R-{\rgrand})}\right) \text{ if } R > {\rgrand},\label{defE2}
\end{align}
and, for $E\in\RR$,
\begin{equation}
X_E:=\left(E,\frac{\nu\nu_E }{\delta_F}E,\frac{(1-\nu)\nu_E}{\delta_M}E\right)^T.
\label{defXE}
\end{equation}
The next theorem, whose proof is given in Appendix~\ref{see:AppendixODE}, gives the steady states of \eqref{eq:S11E1pde}-\eqref{eq:S11E2pde}-\eqref{eq:S11E3pde}.

\begin{theorem}\label{Steadystates}$\phantom{Blabla}$
\begin{enumerate}
    \item[(i)] If $R \in (0, {\rgrand})$,
the system \eqref{eq:S11E1pde}-\eqref{eq:S11E2pde}-\eqref{eq:S11E3pde} has a unique steady state, called the extinction equilibrium, $\mathbf{0} = (0,0,0)^T$.
    \item[(ii)] If $R={\rgrand}$, besides the extinction equilibrium, the system \eqref{eq:S11E1pde}-\eqref{eq:S11E2pde}-\eqref{eq:S11E3pde} has a unique steady state $X_{E_0}$.
    \item[(iii)] If $R \in [{\rgrand}, +\infty)$, besides the extinction equilibrium, the system \eqref{eq:S11E1pde}-\eqref{eq:S11E2pde}-\eqref{eq:S11E3pde} has two equilibria $X_{E_1}$ and $X_{E_2}$.
\end{enumerate}
\end{theorem}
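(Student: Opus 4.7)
The strategy is to reduce the steady-state problem to a single scalar equation in $E$ and then analyze it as a quadratic parametrized by $R$. Setting $\dot F = 0$ and $\dot M = 0$ gives immediately $F=(\nu\nu_E/\delta_F)E$ and $M=((1-\nu)\nu_E/\delta_M)E$, so every equilibrium is necessarily of the form $X_E$ for some $E\in\RR$, and it only remains to characterize the admissible values of $E$. Substituting into $\dot E=0$ and introducing $\alpha := \eta(1-\nu)\nu_E/\delta_M$ (so that $\eta M = \alpha E$), one factors out $E$ and clears the denominator $1+\alpha E$ to obtain
\[
E\bigl[R\alpha E^2 - K\alpha(R-1)E + K\bigr]=0,
\]
which gives either the extinction equilibrium $E=0$ or a quadratic equation in $E$.

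The heart of the argument is the analysis of that quadratic. Its discriminant is $K^2\alpha^2\bigl[(R-1)^2 - 4\gamma R\bigr]$ with $\gamma := \delta_M/(\eta K(1-\nu)\nu_E) = 1/(K\alpha)$, and the essential algebraic step is to recognize that $\rpetit$ and $\rgrand$ from \eqref{defrpetit}--\eqref{defrgrand} are exactly the two roots of the polynomial $X\mapsto(X-1)^2-4\gamma X$. Equivalently one checks that $\rpetit+\rgrand=2+4\gamma$ and $\rpetit\rgrand=1$, the latter following from the elementary identity $(1+2\gamma)^2-4\gamma(1+\gamma)=1$. This yields both the factorization $(R-1)^2 - 4\gamma R = (R-\rpetit)(R-\rgrand)$ and, via $\rpetit\rgrand=1$ with $\rpetit<\rgrand$, the ordering $0<\rpetit<1<\rgrand$.

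It then remains to select the biologically meaningful (i.e.\ positive) roots. The product of the roots of the quadratic is $K/(R\alpha)>0$ and their sum is $K(R-1)/R$, so the two roots have the same sign, and that sign is positive if and only if $R>1$. Combined with the factorization above, this yields the three regimes: for $R\in(0,\rpetit]$ the discriminant is nonnegative but the roots are nonpositive because $R<1$, while for $R\in(\rpetit,\rgrand)$ the discriminant is negative; in either case only $E=0$ survives, giving (i). At $R=\rgrand$ the discriminant vanishes and the unique positive double root is $K(\rgrand-1)/(2\rgrand)=E_0$, giving (ii). For $R>\rgrand$ the quadratic has two distinct positive roots, precisely $E_1$ and $E_2$ of \eqref{defE1}--\eqref{defE2}, giving (iii). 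The main obstacle in this plan is the algebraic identification of $\rpetit$ and $\rgrand$ with the roots of the discriminant polynomial; once that identity is in hand, the rest is a routine sign analysis for a quadratic.
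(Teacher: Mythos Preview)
Your proposal is correct and follows essentially the same route as the paper: reduce to a single quadratic in $E$, study the sign of its discriminant as a quadratic in $R$ with roots $\rpetit,\rgrand$, and then use the sum and product of the roots together with $\rpetit\rgrand=1$ (hence $0<\rpetit<1<\rgrand$) to sort out which regimes give positive solutions. The only cosmetic difference is that you verify $\rpetit,\rgrand$ are the roots of the discriminant polynomial via Vieta's formulas rather than by computing a second discriminant~$\Delta'$; note, however, that your auxiliary symbol $\gamma$ clashes with the paper's later use of $\gamma$ for the female preference parameter, so you should rename it.
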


Our next theorem, also proved in Appendix~\ref{see:AppendixODE}, is dealing with the asymptotic stability of the equilibria given in Theorem~\ref{Steadystates} for the system \eqref{eq:S11E1pde}-\eqref{eq:S11E2pde}-\eqref{eq:S11E3pde}.

\begin{theorem}\label{stablityofstates}
  For the system \eqref{eq:S11E1pde}-\eqref{eq:S11E2pde}-\eqref{eq:S11E3pde} on $[0,+\infty)^3$,
\begin{enumerate}[label=(\roman*)]
    \item \label{0LASforallR} Thanks to the Allee effect, for every value of $R$, $\textbf{0}$  is locally asymptotically stable;
    \item \label{0stabRpetit}  If $R<{\rgrand}$, $\textbf{0}$  is globally  asymptotically stable on $[0,+\infty)^3 $;
     \item \label{R=rgrand-stab} If $R={\rgrand}$, then
\begin{multline}
X_{E_0}=:(E_0,F_0,M_0)^T \text{ is global asymptotically stable on }
\\
\left\{(E,F,M)^T\in [0,+\infty)^3:\; E_0\leq E\leq K,\; F_0\leq F \text {and } M_0\leq M \right\},
\label{stabXE0-critical}
\end{multline}
\begin{multline}
\textbf{0} \text{ is globally asymptotically stable on}
\\
\left\{(E,F,M)^T\in [0,+\infty)^3:\; E<E_0,\; F<F_0 \text { and } M<M_0 \right\},
\label{basin-attraction-0-critical}
\end{multline}
\begin{equation}
X_{E_0}\text{ is unstable.}
\label{XE0unstable}
\end{equation}
    \item \label{R>rgrand-stab} If $R > {\rgrand}$, $X_{E_2}$ is locally asymptotically stable, while $X_{E_1}$ is unstable.
   \end{enumerate}
\end{theorem}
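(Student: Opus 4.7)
The plan is to prove the local statements \ref{0LASforallR} and \ref{R>rgrand-stab} by linearization, and the global attraction statements \ref{0stabRpetit} and \ref{R=rgrand-stab} via the cooperative structure of the system together with a scalar reduction on the center manifold of $X_{E_0}$.

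For \ref{0LASforallR}, I would observe that the Allee factor $\eta M/(1+\eta M)$ vanishes to first order at $M=0$, so $\partial_F \dot E$ and $\partial_M \dot E$ both vanish at $\mathbf{0}$; the Jacobian there is lower triangular with strictly negative diagonal $(-(\nu_E+\delta_E),-\delta_F,-\delta_M)$, yielding LAS of $\mathbf{0}$ for every $R$. For \ref{R>rgrand-stab}, linearization at $X_{E_1}$ and $X_{E_2}$ produces a characteristic polynomial whose indicator of a positive eigenvalue has the sign of $-P'(E_i)$, where $P$ is the downward-opening quadratic whose positive roots are $E_1<E_2$ (obtained after eliminating $F$ and $M$ using the steady-state relations $F=\nu\nu_E E/\delta_F$, $M=(1-\nu)\nu_E E/\delta_M$). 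Since $P'(E_1)>0$ and $P'(E_2)<0$, $X_{E_1}$ admits a positive eigenvalue (saddle-type instability), while $X_{E_2}$ passes a standard Routh--Hurwitz check, giving LAS.

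For the global items, the key structural observation is that the system is \emph{cooperative} on the slab $\{0\leq E\leq K,\,F\geq 0,\,M\geq 0\}$, and strongly cooperative in its interior (the interaction graph between the three variables is irreducible). The box
\begin{equation*}
\Omega_0 := [0,K]\times \left[0,\,\tfrac{K\nu\nu_E}{\delta_F}\right]\times \left[0,\,\tfrac{K(1-\nu)\nu_E}{\delta_M}\right]
\end{equation*}
is forward invariant and is entered in finite time by every non-negative trajectory (for $E>K$ one has $\dot E<0$, after which the linear $F,M$-equations drive $F$, $M$ below their bounds). At the upper corner $X_K$ of $\Omega_0$ one computes $\dot E|_{X_K}<0$ and $\dot F|_{X_K}=\dot M|_{X_K}=0$, so by the monotone comparison principle the trajectory $X_K(t)$ is non-increasing componentwise and dominates from above every trajectory in $\Omega_0$. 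If $R<\rgrand$, Theorem~\ref{Steadystates} gives $\mathbf{0}$ as the only equilibrium, so $X_K(t)\to\mathbf{0}$; sandwiching then yields global attraction, which combined with \ref{0LASforallR} proves \ref{0stabRpetit}. If $R=\rgrand$, the equilibria are $\mathbf{0}$ and $X_{E_0}\leq X_K$: the bounded non-increasing trajectory $X_K(t)$ is also bounded below by $X_{E_0}$ and thus converges to $X_{E_0}$, yielding \eqref{stabXE0-critical} by sandwiching. Symmetrically, any trajectory starting strictly below $X_{E_0}$ is bounded above by $X_{E_0}$ and, by monotone-systems convergence, has $\omega$-limit in $\{\mathbf{0},X_{E_0}\}$; the latter is excluded by the scalar-reduction argument below, giving \eqref{basin-attraction-0-critical} and the instability of $X_{E_0}$.

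The main obstacle is the non-hyperbolic character of $X_{E_0}$ at $R=\rgrand$: its cooperative, irreducible Jacobian has a zero Perron eigenvalue with a strictly positive eigenvector (this is exactly the saddle-node bifurcation), so linearization alone is inconclusive. The resolution is a scalar reduction on the one-dimensional center manifold: the $(F,M)$-subsystem is linear in $(F,M)$ with $E$ as input and asymptotically tracks the quasi-steady-state $F=\nu\nu_E E/\delta_F$, $M=(1-\nu)\nu_E E/\delta_M$, so the long-term dynamics of $E$ are governed by $\dot E = (\nu_E+\delta_E)\,E\,P(E)/(1+\eta b E)$, with $b:=(1-\nu)\nu_E/\delta_M$ and $P\leq 0$ having a double zero exactly at $E_0$. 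This simultaneously yields attractivity of $X_{E_0}$ from above and repulsion from below, hence its (topological) instability and the basin structure claimed in \eqref{stabXE0-critical}--\eqref{basin-attraction-0-critical}.
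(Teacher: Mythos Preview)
Your treatment of \ref{0LASforallR}, \ref{0stabRpetit}, and \ref{R>rgrand-stab} coincides with the paper's: linearization at $\textbf{0}$ and at $X_{E_i}$ (with the constant term of the characteristic polynomial playing the role of your $-P'(E_i)$), cooperative structure on the slab $\{E\le K\}$, and monotone sandwiching from an upper supersolution. The paper phrases \ref{0stabRpetit} via a cited order-interval theorem for cooperative systems applied to $[\textbf{0},(K,\lambda,\lambda)^T]$ rather than to your box $\Omega_0$, but this is cosmetic.

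For \ref{R=rgrand-stab} the routes genuinely diverge. The paper avoids any center-manifold reduction. It notes that along the ray $E\mapsto X_E=(E,\nu\nu_E E/\delta_F,(1-\nu)\nu_E E/\delta_M)^T$ the components $f_2,f_3$ vanish identically, and computes directly that
\[
f_1(X_{E_0-\varepsilon})=-\rho^2\varepsilon^2\,\frac{\delta_M(\nu_E+\delta_E)(E_0-\varepsilon)}{\delta_M+\eta(1-\nu)\nu_E(E_0-\varepsilon)}<0,
\]
the factor $\varepsilon^2$ being precisely the double root at $E_0$. Hence $b^\varepsilon:=X_{E_0-\varepsilon}$ is a strict supersolution, and the \emph{same} order-interval theorem applied to $[\textbf{0},b^\varepsilon]$ (which contains $\textbf{0}$ as its unique equilibrium) yields \eqref{basin-attraction-0-critical} at once; \eqref{XE0unstable} follows since $X_{E_0}$ sits on the closure of this basin, and \eqref{stabXE0-critical} is obtained symmetrically with $a=X_{E_0}$.

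This is both shorter and closes a gap in your sketch. Your step ``by monotone-systems convergence, $\omega$-limit in $\{\textbf{0},X_{E_0}\}$'' is not automatic for \emph{every} initial condition in the open order interval: Hirsch-type results for strongly monotone flows give convergence to equilibria only generically, and the center-manifold analysis you outline yields only \emph{local} repulsion from below, which by itself does not rule out that some trajectory strictly below $X_{E_0}$ accumulates on it. The paper's explicit supersolution $b^\varepsilon$ turns the quadratic degeneracy at $E_0$ directly into the differential inequality $f(b^\varepsilon)\le 0$ needed for the monotone-interval argument, so no invariant-manifold machinery is required.
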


The proof of \ref{0stabRpetit}, which is given in Appendix \ref{see:AppendixODE}, relies on the monotonicity property of the dynamical system \eqref{eq:S11E1pde}-\eqref{eq:S11E2pde}-\eqref{eq:S11E3pde}. However, when $R < 1$, the global asymptotic stability of $\textbf{0}$ can also be proven using Lyapunov's second theorem. To this end, we define a candidate Lyapunov function $V: [0,+\infty)^3  \to [0,+\infty)$, $p =(E,F,M)^T\mapsto V(p)$, as in the proof of \cite[Theorem 2.2]{bidi_global_2025}. It is given by:
\begin{equation}
\label{defV-u=0pde}
    V(p) := \frac{1+R}{1-R} E + \frac{2 \beta_E}{\delta_F(1-R)} F + M.
\end{equation}
Since $R < 1$, it follows that:
\begin{gather}
\label{V>0-sec2pde}
V(p) > V(\textbf{0}) = 0, \quad \forall p \in [0,+\infty)^3  \setminus \{\textbf{0}\}, \\
\label{Vinfty-sec2pde}
V(p) \to +\infty \quad \text{as } |p| \to +\infty \quad \text{with } p \in [0,+\infty)^3 .
\end{gather}
Moreover, along the trajectories of \eqref{eq:S11E1pde}-\eqref{eq:S11E2pde}-\eqref{eq:S11E3pde}, we have:
\begin{gather}
\label{dotV-sec2pde-eq}
    \dot{V}(p) = -(\nu \nu_E + \delta_E)E
    - \frac{\beta_E}{K} \frac{1+R}{1-R} F E \frac{\eta M}{1+\eta M}
    - \delta_M M - \beta_E F
    - \frac{1+R}{1-R} \frac{1}{1+\eta M} \beta_E F,
\end{gather}
which leads to:
\begin{gather}
\label{dotV-sec2pde}
\dot{V}(p) \leq -(\nu \nu_E + \delta_E)E - \delta_M M - \beta_E F .
\end{gather}
 From \eqref{defV-u=0pde} and \eqref{dotV-sec2pde}, we deduce:
\begin{equation}
\label{dotV<-sec2cpde}
    \dot{V}(p) \leq -c_0 V(p),
\end{equation}
where
\begin{equation}
\label{decaypde}
    c_0 := \min \left\{ \frac{(\nu \nu_E + \delta_E)(1-R)}{1+R}, \frac{\delta_F (1-R)}{2}, \delta_M \right\} > 0.
\end{equation}
\begin{remark}
The above proof shows the global exponential stability and gives an estimate of the exponential decay rate. It would be interesting to provide a Lyapunov function for the  case  $R\in [1,{\rgrand})$.
\end{remark}

Let us denote by $M_s(t)\geq 0$  the sterile adult male density, $\delta_s>0$ the death rate of sterile adults, $u\geq 0$  the control which is the density of sterile males released at time $t$, and $0<\gamma\leq1$ accounts for the fact that females may have a preference for fertile males. Conversely, $\gamma > 1$ would correspond to females showing a preference for sterile males. This situation can occur in certain species for instance when sterile males are enhanced with pheromones or a high-protein diet. Such a scenario is even more favorable for the success of SIT and thus we will concentrate on the harder setting of having $0<\gamma\leq1$.

 When we release sterile male mosquitoes with a release function $u$, we obtain the following dynamics
\begin{align}
		&\dot{E} = \beta_E F \left(1-\frac{E}{K}\right)\frac{\eta M}{1+\eta (M+\gamma M_s)} - \big( \nu_E + \delta_E \big) E,\label{eq:S1Epde1}  \\
		&\dot{F} =\nu\nu_E E  - \delta_F F, \label{eq:S1Epde3} \\
	&\dot{M} = (1-\nu)\nu_E E - \delta_M M, \label{eq:S1Epde2} \\
		&\dot{M}_s = u - \delta_s M_s. \label{eq:S1Epde4}
\end{align}
	
The fact that we had not considered the Allee effect in our previous work \cite{bidi_global_2025} made that the model had a singularity when $M$ and $M_s$ converged to zero, which is no longer the case in the present model. Removing this singularity by taking into account the mating difficulties in small populations also has the advantage of simplifying many of the proofs thanks to being able to apply more directly the classical results in control theory in smooth settings.

If $u=0$, the eigenvalues of the Jacobian of system  \eqref{eq:S1Epde1}-\eqref{eq:S1Epde2}-\eqref{eq:S1Epde3}-\eqref{eq:S1Epde4} at $\textbf{0}\in \RR^4$ are $-(\nu_E+\delta_E)$, $-\delta_F$, $ -\delta_M$, and $-\delta_s$. They are  all  real and negative, which implies that  $\textbf{0}$ is locally asymptotically stable for system  \eqref{eq:S1Epde1}-\eqref{eq:S1Epde2}-\eqref{eq:S1Epde3}-\eqref{eq:S1Epde4}.
If $u=0$, the steady states are $(E,F,M,0)^T$, where $(E,F,M)^T$ are steady states of \eqref{eq:S11E1pde}-\eqref{eq:S11E2pde}-\eqref{eq:S11E3pde} and one has the following theorem, whose proof is  given in  Appendix~\ref{see:AppendixODE}.

\begin{theorem}\label{stablityofsitode}
If $R<{\rgrand}$, then   $\textbf{0}$  is globally  asymptotically stable on $[0,+\infty)^4$ for system \eqref{eq:S1Epde1}-\eqref{eq:S1Epde3}-\eqref{eq:S1Epde2}-\eqref{eq:S1Epde4} with $u=0$.
\end{theorem}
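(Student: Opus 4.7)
The plan is to exploit the cascade structure of the closed-loop system \eqref{eq:S1Epde1}--\eqref{eq:S1Epde4} with $u=0$. The equation for $M_s$ decouples and integrates to $M_s(t)=M_s(0)e^{-\delta_s t}\to 0$, while the remaining $(E,F,M)$-block is a monotone downward perturbation of the autonomous three-dimensional system \eqref{eq:S11E1pde}--\eqref{eq:S11E2pde}--\eqref{eq:S11E3pde}, whose global asymptotic stability on $[0,+\infty)^3$ for $R<{\rgrand}$ is provided by Theorem~\ref{stablityofstates}\,\ref{0stabRpetit}. Combining this cascade with a comparison argument will yield global attractivity of $\textbf{0}$; together with the local asymptotic stability at $\textbf{0}$ already obtained from the Jacobian (whose eigenvalues $-(\nu_E+\delta_E)$, $-\delta_F$, $-\delta_M$, $-\delta_s$ are real and strictly negative), this produces global asymptotic stability on $[0,+\infty)^4$.

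More concretely, I would proceed in four steps. First, check that $[0,+\infty)^4$ is forward invariant by inspecting the vector field on each coordinate hyperplane; for instance on $\{E=0\}$ one has $\dot E\ge 0$ since $F,M,M_s\ge 0$, and similarly on the other three faces, which also yields global-in-time existence. Second, exhibit the absorbing set $\{E\le K\}$: whenever $E\ge K$ the first term in \eqref{eq:S1Epde1} is nonpositive, so $\dot E\le -(\nu_E+\delta_E)E$ and $E$ enters and remains in $[0,K]$ in finite time. Third, perform the comparison: in this absorbing region, $\eta M/(1+\eta(M+\gamma M_s))\le\eta M/(1+\eta M)$ because $M_s\ge 0$, and $(1-E/K)\ge 0$, so the right-hand side of \eqref{eq:S1Epde1} is pointwise dominated by that of \eqref{eq:S11E1pde}; moreover the partial derivatives of the right-hand side of \eqref{eq:S11E1pde} with respect to $F$ and $M$ are nonnegative on $\{E\le K\}$, so the auxiliary three-dimensional system is cooperative there. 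Kamke's comparison theorem then yields $E(t)\le\tilde E(t)$, $F(t)\le\tilde F(t)$, $M(t)\le\tilde M(t)$, where $(\tilde E,\tilde F,\tilde M)$ is the solution of \eqref{eq:S11E1pde}--\eqref{eq:S11E2pde}--\eqref{eq:S11E3pde} issued from the same data as soon as $E\le K$. Fourth, conclude by invoking Theorem~\ref{stablityofstates}\,\ref{0stabRpetit}: since $R<{\rgrand}$ we have $(\tilde E,\tilde F,\tilde M)\to\textbf{0}$, and by the sandwich $(E,F,M)\to\textbf{0}$, which together with $M_s\to 0$ gives global attractivity.

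The step I expect to require most care is the comparison in Step~3. One must verify cooperativity precisely (it fails outside the absorbing set $\{E\le K\}$, which is why the absorption Step~2 has to come first) and justify the initial-time shift in the application of Kamke's theorem. An alternative route, avoiding the monotone-systems machinery altogether, would be to regard the $(E,F,M)$-subsystem as a vanishing non-autonomous perturbation of \eqref{eq:S11E1pde}--\eqref{eq:S11E2pde}--\eqref{eq:S11E3pde} driven by the exponentially decaying input $M_s$, and invoke an input-to-state stability argument; the monotone comparison route is however more elementary, and is in the same spirit as the monotonicity-based proof of Theorem~\ref{stablityofstates}\,\ref{0stabRpetit} announced in the text.
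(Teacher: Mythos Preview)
Your proposal is correct and follows essentially the same approach as the paper: integrate $M_s(t)=M_s(0)e^{-\delta_s t}$, reduce to the absorbing set $\{E\le K\}$ where the three-dimensional block is cooperative, apply the Kamke comparison principle against the autonomous system \eqref{eq:S11E1pde}--\eqref{eq:S11E3pde}, and conclude from Theorem~\ref{stablityofstates}\,\ref{0stabRpetit}. You have even anticipated the correct reason why the absorption step must precede the comparison.
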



Let us recall here the backstepping method used for the design of the control laws for SIT models in our previous works. We take advantage of the fact that the system considered  \eqref{eq:S1Epde1}-\eqref{eq:S1Epde4} can be divided into two subsystems: the interdependent subsystem formed by $z = (E, M, F)$, and the $M_s$ subsystem, which evolves independently of the other variables. 
\begin{equation*}\label{eq:systemBackstep}
\left\{
\begin{aligned}
	&\dot{z}= f(z,M_s),\\
	&\dot{M}_s  = u-\delta_s M_s,
\end{aligned}\right.
\end{equation*}
This particular structure motivates the use of the backstepping technique to construct the control law. In the general context of applications, this technique is better suited when the control is not subject to constraints. However, in our case, the control law must remain positive, as we are working in a biological setting where the control represents the density of sterile mosquitoes to be released.
By considering the dynamics of the equations for $E$, $M$, and $F$ in system \eqref{eq:systemBackstep}  with $M_s$ as the control input, in accordance with the backstepping method presented in \cite[Theorem 12.24]{2007-Coron-book}, we observe that setting $M_s = \theta M$ leads to $\frac{\eta M}{1+\eta (M + \gamma M_s}\leq \frac{1}{1+\gamma \theta}$. Moreover, this choice has a biologically meaningful interpretation: it implies that the ratio of released sterile male mosquitoes to wild male mosquitoes is equal to $\theta$.
The asymptotic stability of this subsystem
\begin{align*}
    \dot z = f(z,\theta M) = \left( \begin{array}{ccc} \beta_E F \left(1-\frac{E}{K}\right)\frac{\eta M}{1+\eta (1+\gamma \theta )M} - \big( \nu_E + \delta_E \big) E\\  (1-\nu)\nu_E E - \delta_M M\\    \nu\nu_EE - \delta_F F
\end{array}
\right).
\end{align*}
can be fully analyzed. We establish that stability holds if and only if $\theta >\theta^*$, where $\theta^*$ is an explicitly determined constant. When this condition is satisfied, i.e., $\theta >\theta^*$, we construct an explicit Lyapunov function for the three-dimensional closed-loop system $\dot z = f(z,\theta M)$. This construction is a crucial step for applying the backstepping method. Our Lyapunov function is homogeneous of degree one, which is physically more meaningful than a conventional quadratic Lyapunov function.

To design a control law $u$ that both satisfies the positivity constraint and stabilizes the four-dimensional system \eqref{eq:systemBackstep}, we penalize deviations from the relation $M_s\not = \theta M$. Rather than employing the classical penalty term  $(M_s-\theta M)^2/2$  to be added to the Lyapunov function, we introduce a modified penalty of the form 
 $\alpha (M_s-\theta M)^2/(\theta M+M_s)$, which has the advantage of also being homogeneous of degree one. This adjustment enables the derivation of a control $u$ that remains non-negative and ensures global stabilization of system \eqref{eq:systemBackstep} \cite{bidi_global_2025, bidi2024feedback}. This  methodology will be applied in the case of SIT PDEs in Section \ref{see:GlobalStabilitysection}.

\section{Reaction diffusion model for mosquito populations}\label{Mosquitoes-RD}

Here, we study the effect of releasing sterile males in a limited region in space among a mosquito population already established in a non-empty open subset $\Omega$ of $\RR^2$. For the sake of simplicity, we will assume that $\Omega$ is regular enough and that all biological parameters remain constant over time, thus disregarding the effects of field heterogeneity and seasonal variations, except for the carrying capacity $K$.

Few studies explicitly model the spatial component due to the lack of sufficient knowledge about vectors in the field. Moreover, from a mathematical perspective, the study of spatial-temporal models is more sophisticated. A reaction-diffusion equation was used in \cite{manoranjan1986diffusion} to model the spread of a pest in a SIT model. A significant amount of research has been conducted in the simplified setting of traveling wave equations, leading to elaborate control strategies (see, for instance, \cite{almeidaBarrier2022, almeidaRolling2023, leculier2023control,almeidaRolling2025}).

In this work, we do not consider the one-dimensional traveling wave simplification. Instead, we study the life cycle model of a mosquito population on a domain $\Omega$, which is a non-empty regular open subset of $\RR^2$.

We choose the space $L^1(\Omega)$ as our working space. In the context of mathematical biology, the space $L^1$ (the space of absolutely integrable functions) offers several advantages that make it particularly useful and often more relevant than $L^2$ and $H^1$, which are typically chosen due to their convenience for mathematical analysis. Indeed, the $L^1$-norm measures quantities such as total biomass, total population, or total resource consumption in biological models. This direct interpretability makes $L^1$ spaces especially relevant in applications where the integral of a function over its domain $\Omega$ has a clear biological meaning like for a density function of a  population in which case its $L^1$-norm yields the total population of the corresponding species in the domain. In this section, we will study the existence and uniqueness of the solution in such a working space.

\subsection{Life cycle model for a mosquito population in a bounded domain}

  We assume that the carrying capacity function $K\in \mathcal{C}^0(\bar\Omega; (0,+\infty))$. Let $(E^0,F^0, M^0)^T: \Omega\rightarrow [0,+\infty)^3$ be the mosquito population at the initial time $t=0$.
The mosquito population density $$(E(t,x),F(t,x),M(t,x))^T$$ at time $t\geq 0$ and position $x\in \Omega$ is the solution of the Cauchy problem
\begin{align}\label{eq:MosquitoLifemodelpde}
    \;\;\left\{\begin{aligned}
        &\frac{\partial{E}}{\partial t}    =  \beta_EF\bigg(1-\frac{E}{K(x)}\bigg)\frac{\eta M}{1+\eta M} - (\nu_E+\delta_E)E, \; t\geq 0,\; x\in \Omega,\\
    &\frac{\partial{F}}{\partial t} - d_1\Delta F = \nu\nu_E E - \delta_FF, \; t\geq 0,\; x\in \Omega,\\
    &\frac{\partial{M}}{\partial t} - d_2\Delta M =  (1-\nu)\nu_E E - \delta_MM,\; t\geq 0,\; x\in \Omega, \\
    & \frac{\partial{F}}{\partial n} =
    \frac{\partial{M}}{\partial n}=0,\; t\geq 0,\; x\in \partial \Omega,
   \\& (E(0,x),F(0,x),M(0,x))^T = (E^0(x),F^0(x), M^0(x))^T,  \;x\in\Omega,
    \end{aligned}
    \right.
 \end{align}
 where $n$ is the outward unit normal to $\Omega$.

 A first natural mathematical question is the well-posedness of this Cauchy problem, i.e. the existence and the uniqueness of solution for \eqref{eq:MosquitoLifemodelpde} in a meaningful class. We study this question in the next section, i.e. in Section~\ref{sec-well-posedness}. In Section~\ref{subsec-as-life-cycle} we study the global asymptotic stability of the origin for \eqref{eq:MosquitoLifemodelpde}.

\subsection{Well-posedness of the Cauchy problem}
\label{sec-well-posedness}
Let us start by recalling the motivation of the definition of weak solutions of  the Cauchy problem \eqref{eq:MosquitoLifemodelpde}. Let $T>0$, $(E,F,M)^T:[0,T]\times \bar \Omega\rightarrow [0,+\infty)^3$ be a smooth solution of  \eqref{eq:MosquitoLifemodelpde} on $[0,T]\times \bar \Omega$, and let $\varphi : [0,T]\times \bar \Omega\rightarrow \RR$ be of class $\mathcal{C}^1$. Let $t\in [0,T]$. Multiplying the second and third equations of \eqref{eq:MosquitoLifemodelpde}, integrating over $[0,t]\times\Omega$, performing integration by parts, and using the last two equations of \eqref{eq:MosquitoLifemodelpde}, one gets

\begin{multline}
\label{eqintF}
\int_\Omega F(t,x)\varphi(t,x)\;dx-\int_\Omega F^0(x)\varphi(0,x)\;dx
+d_1\int_{Q_t}\nabla F \cdot \nabla \varphi -
\int_{Q_t} F\frac{\partial \varphi}{\partial t}=
\\
\int_{Q_t}\left(\nu\nu_E E - \delta_FF\right)\varphi,
\end{multline}
\begin{multline}
\label{eqintM}
\int_\Omega M(t,x)\varphi(t,x)\;dx-\int_\Omega M^0(x)\varphi(0,x)\;dx
+d_2\int_{Q_t}\nabla M \cdot \nabla \varphi -
\int_{Q_t} M\frac{\partial \varphi}{\partial t}=
\\
\int_{Q_t}\left((1-\nu)\nu_EE - \delta_MM\right)\varphi,
\end{multline}
with
\begin{gather}
\label{defQtau}
Q_t:=(0,t)\times\Omega.
\end{gather}
Conversely, if $(E,F,M)^T:[0,T]\times \bar \Omega\rightarrow [0,+\infty)^3$ is smooth enough (for example of class $\mathcal{C}^2$) and is such that
\begin{multline}
\label{eqE-classic}
\frac{\partial{E}}{\partial t}(t,x)    =  \beta_EF(t,x)(1-\frac{E(t,x)}{K(x)})\frac{\eta M(t,x)}{1+\eta M(t,x)}
\\ - (\nu_E+\delta_E)E(t,x), \; \forall t\in [0,T], \text{ for almost every } x \in \Omega,
\end{multline}
\begin{gather}
\label{initE}
E(0,x)=E^0(x) \text{ for almost every } x \in \Omega,
\end{gather}
and such that \eqref{eqintF} and \eqref{eqintM} hold for every $t \in [0,T]$ and for every $\varphi : [0,T]\times \bar \Omega\rightarrow \RR$ of class $\mathcal{C}^1$, then
$(E,F,M)^T$ is a solution of the Cauchy problem \eqref{eq:MosquitoLifemodelpde} on $[0,T]$. Hence  it is reasonable to define the notion of weak solution so that \eqref{eqintF} and \eqref{eqintM} hold.
 It is then natural to adopt the following definition.

\begin{definition}
\label{def-sol-EFM}
Let $T>0$. The application $(E,F,M)^T:[0,T]\times \bar \Omega\rightarrow \RR^3$  is a weak solution of the Cauchy problem \eqref{eq:MosquitoLifemodelpde} on $[0,T]$ if
\begin{gather}
\label{EFMgeq0}
E, F, \text{ and } M \text{ take values in  } [0,+\infty),
\\
\label{EFMC0L1}
E, F, \text{ and } M \text{ are in } C^0([0,T];L^1(\Omega)),
\\
\label{EMW11}
F \text{ and } M \text{ are in } L^1((0,T);W^{1,1}(\Omega)),
\end{gather}
and \eqref{eqE-classic}, \eqref{initE} hold, and \eqref{eqintF} and \eqref{eqintM} hold for every $t \in [0,T]$ and for every $\varphi :  [0,T]\times \bar \Omega\rightarrow \RR$ of class $\mathcal{C}^1$.
Moreover $(E,F,M)^T:[0,+\infty)\times  \Omega\rightarrow \RR^3$  is a weak solution of the Cauchy problem \eqref{eq:MosquitoLifemodelpde} on $[0,+\infty)$ if, for every $T>0$, its restriction to $[0,T]\times \Omega$ is a weak solution of  \eqref{eq:MosquitoLifemodelpde} on $[0,T]$.
\end{definition}
\begin{remark}
\label{rem-sens-ode} Let us comment about the meaning of \eqref{eqE-classic}. Let $x\in \Omega$ be such that
\begin{gather}
\label{F-M-L^1}
F(\cdot,x) \text{ and } M(\cdot,x) \text{ are in }L^1(0,T).
\end{gather} Note that, since $F$ and $M$ are in $C^0([0,T];L^1(\Omega))\subset L^1((0,T)\times \Omega)$, \eqref{F-M-L^1} holds for almost every $x\in \Omega$. For $x$ such that \eqref{F-M-L^1} holds, $t\in[0,T]\mapsto E(t,x)$ is just a classical solution of the time-varying ordinary differential equation
\begin{gather}
\dot \psi =A(t,x)-B(t,x)\psi
\end{gather}
with
\begin{gather}
A(t,x):=\beta_EF(t,x)\frac{\eta M(t,x)}{1+\eta M(t,x)}  \text{ and } B(t,x)=\frac{\beta_E}{K(x)} F(t,x)\frac{\eta M(t,x)}{1+\eta M(t,x)} + (\nu_E+\delta_E).
\end{gather}
In particular, if moreover $E(0,x)=E^0(x)$, which by \eqref{initE} holds also for almost every $x\in \Omega$,
\begin{gather}
\label{expression-explicit-E}
E(t,x)=e^{-\int_0^tB(s,x)\;ds}E^0(x)+\int_0^te^{-\int_s^tB(\tau,x)\;d\tau}A(s,x)\;ds.
\end{gather}
\end{remark}
  With this definition, one has the following well-posedness theorem, whose proof is given in Appendix \ref{app-well-posed} and in Appendix~\ref{existenceinL1}.
\begin{theorem}
\label{th-well-posed-EFM}
Let $(E^0,F^0,M^0)^T:\Omega \rightarrow [0,+\infty)^3$ be such that
\begin{gather}
\label{assump-L1-init}
E^0\in L^1(\Omega), \; F^0\in L^1(\Omega), \text{ and } M^0\in L^1(\Omega).
\end{gather}
Then there exists a weak solution of  the Cauchy problem \eqref{eq:MosquitoLifemodelpde} on $[0,+\infty)$. Moreover, if one also has
\begin{gather}
\label{assump-init-reg}
E^0\in L^r(\Omega) \text{ for some } r\in (1,+\infty],
\end{gather}
this weak solution is unique.
\end{theorem}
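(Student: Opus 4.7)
The plan is to exploit the block structure of \eqref{eq:MosquitoLifemodelpde}: at each fixed $x\in\Omega$, the $E$-equation is a scalar linear non-autonomous ODE in $t$ whose solution is the explicit Duhamel formula \eqref{expression-explicit-E}; conversely, given $E$, the $F$- and $M$-equations are two uncoupled linear heat equations with Neumann boundary conditions and sources $\nu\nu_E E$ and $(1-\nu)\nu_E E$, for which one can invoke the standard $L^1$-semigroup theory of the Neumann Laplacian. I would build a weak solution as a fixed point of the map $\mathcal T$ on $C^0([0,T];L^1(\Omega))^3$ that, to $(\tilde E,\tilde F,\tilde M)$, associates $(E,F,M)$ where $F$ and $M$ are the weak solutions of the Neumann heat equations driven by $\tilde E$ and $E$ is obtained by plugging $(\tilde F,\tilde M)$ into \eqref{expression-explicit-E}.

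\textbf{A priori bounds.} Before attempting any contraction step, I would establish two estimates shared by any candidate solution. First, nonnegativity: the Neumann heat semigroup preserves nonnegativity on $L^1$, and \eqref{expression-explicit-E} gives $E\geq 0$ whenever $E^0,F,M\geq 0$. Second, a total-mass estimate: testing \eqref{eqintF}--\eqref{eqintM} with $\varphi\equiv 1$ and integrating \eqref{eqE-classic} over $\Omega$, and using the crucial bound $\eta M/(1+\eta M)\leq 1$ to control the reaction in $E$ by $\beta_E F$, I obtain a closed Gronwall inequality for the total mass $\|E(t)\|_{L^1}+\|F(t)\|_{L^1}+\|M(t)\|_{L^1}$. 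As a byproduct, whenever $E^0\in L^\infty$, the inequality $\partial_t E\leq -(\nu_E+\delta_E)E$ on the set $\{E\geq K\}$ yields the pointwise bound $E(t,x)\leq\max(\|K\|_{L^\infty(\Omega)},\|E^0\|_{L^\infty(\Omega)})$.

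\textbf{Existence.} I would first approximate the initial data by truncations $(E^0_n,F^0_n,M^0_n):=(\min(E^0,n),\min(F^0,n),\min(M^0,n))\in L^\infty(\Omega)^3$. In the $L^\infty$ category the nonlinearity is Lipschitz on bounded sets, so a Banach fixed point of $\mathcal T$ on a short time interval produces a classical solution, which the a priori mass and $L^\infty$ bounds extend to $[0,T]$ for any $T>0$. The sequence $(E_n,F_n,M_n)$ is nonnegative, bounded in $C^0([0,T];L^1(\Omega))^3$ and monotone nondecreasing in $n$ by comparison, so monotone convergence supplies a limit triple, while dominated convergence, together with the boundedness of $\eta M/(1+\eta M)$, justifies passing to the limit in \eqref{eqintF}--\eqref{eqintM} and in the explicit formula \eqref{expression-explicit-E}. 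Continuity in time in $L^1$ comes from the heat semigroup for $F,M$ and from \eqref{expression-explicit-E} for $E$.

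\textbf{Uniqueness under $E^0\in L^r$ and the main obstacle.} The hardest step is uniqueness, precisely because the $E$-equation carries no diffusion and so offers no smoothing to tame the bilinear products $FE/K$ and $F\,\eta M/(1+\eta M)$ in an $L^1$-only framework; this is the reason for the extra hypothesis $E^0\in L^r$ with $r>1$. Using \eqref{expression-explicit-E} and $\eta M/(1+\eta M)\leq 1$ one gets $\|E(t)\|_{L^r}\leq\|E^0\|_{L^r}+\beta_E\int_0^t\|F(s)\|_{L^r}\,ds$, and the two-dimensional Neumann heat semigroup gains integrability from $L^1$-sources into $L^r$ with integrable time-singularity, so a bootstrap yields uniform bounds of $E,F,M$ in $L^\infty((0,T);L^r(\Omega))$ that depend only on the initial data. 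Given two weak solutions, I would then write the difference equations and decompose $F_1(1-E_1/K)G(M_1)-F_2(1-E_2/K)G(M_2)$ into a sum of three terms each bilinear in a difference $\Delta F,\Delta E,\Delta M$ and an $L^r$-bounded factor ($F_i$, $E_i/K$, $F_i$ respectively), using $|G(M_1)-G(M_2)|\leq\eta|\Delta M|$. Applying Hölder's inequality with the pair $(r,r/(r-1))$ and combining with the $L^1$-contraction property of the Neumann heat semigroup for $\Delta F$ and $\Delta M$ closes a Gronwall inequality on $\|\Delta E(t)\|_{L^1}+\|\Delta F(t)\|_{L^1}+\|\Delta M(t)\|_{L^1}$ and forces the two solutions to coincide.
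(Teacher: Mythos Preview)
Your existence argument has a genuine gap: the claim that $(E_n,F_n,M_n)$ is monotone nondecreasing in $n$ by comparison is false. The system \eqref{eq:MosquitoLifemodelpde} is cooperative only on the set $\{E\le K\}$; indeed $\partial_{F}\big[\beta_E F(1-E/K)\,\eta M/(1+\eta M)\big]=\beta_E(1-E/K)\,\eta M/(1+\eta M)$ is negative wherever $E>K$. Since your truncated data $E^0_n=\min(E^0,n)$ can exceed $K$ on a set of positive measure, the comparison principle is unavailable there. Concretely, at a point where $E^0_n=E^0_{n+1}>K$ but $F^0_n<F^0_{n+1}$, one has $\partial_t E_n(0,x)>\partial_t E_{n+1}(0,x)$, so $E_n>E_{n+1}$ for small $t>0$ and monotonicity breaks down. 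The paper circumvents this entirely: it extracts limits of the approximate solutions via an $L^1$-compactness lemma for parabolic equations (the sources $\nu\nu_E E_n$ being bounded in $L^1(Q_T)$ forces relative compactness of $(F_n)$ and $(M_n)$ in $L^1((0,T);W^{1,1}(\Omega))$), and then passes to the limit in the explicit formula \eqref{expression-explicit-E} by dominated convergence. No order structure is used.

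Your uniqueness sketch is sound in spirit but contains two imprecisions. First, the bootstrap does \emph{not} give $F,M\in L^\infty((0,T);L^r(\Omega))$: with $F^0$ merely in $L^1$, the Duhamel term $S_1(t)F^0$ contributes $\|F(t)\|_{L^r}\le C\,t^{-(r-1)/r}$, which blows up at $t=0$ (though integrably in $t$). Second, a plain Gronwall on $\|\Delta E\|_{L^1}+\|\Delta F\|_{L^1}+\|\Delta M\|_{L^1}$ does not close directly: the term $\int_\Omega F_2|\Delta M|(1+E^0/K)$ cannot be bounded by a constant times $\|\Delta M\|_{L^1}$. One must use heat smoothing $\|\Delta M(t)\|_{L^{2r/(r-1)}}\le C\int_0^t(t-s)^{-(r+1)/(2r)}\|\Delta E(s)\|_{L^1}\,ds$ and the corresponding singular bound on $\|F_2(t)\|_{L^{2r/(r-1)}}$, leading to a Volterra-type singular Gronwall rather than a classical one. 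The paper sidesteps both issues by running a contraction on $E$ alone in the set $\mathcal{C}=\{0\le E\le\max(K,E^0)\}\subset C^0([0,T];L^1)$, computing $E$ from $e$ via the explicit formula \eqref{expression-explicit-E} and bounding $\|\mathcal{Q}(e)-\mathcal{Q}(\hat e)\|$ through three terms $R_1,R_2,R_3$; the assumption $E^0\in L^r$ enters precisely in $R_1=\int_\Omega E^0|e^{-\int J}-e^{-\int\hat J}|$ via H\"older, and the time-singularities of $\|f\|_{L^p}$ appear only under integrals where they are harmless.
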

\begin{remark} It would be interesting to know if the uniqueness of the weak solution also holds without assuming \eqref{assump-init-reg}.
\end{remark}

\subsection{Asymptotic stability of the life cycle model}
\label{subsec-as-life-cycle}
Our next theorem shows that if \eqref{R<1} holds then the origin  is globally exponentially stable for  \eqref{eq:MosquitoLifemodelpde} in the $L^1(\Omega)^3$-norm.
\begin{theorem}
\label{th-exp-satble-u=0-spaces}
Assume that
\begin{gather}
    \label{R<1}
    R<1.
\end{gather}
 Then there exist $C>0$ and $\mu>0$ such that, for every $(E^0,F^0,M^0)^T: \Omega \rightarrow [0,+\infty)^3$ such that \eqref{assump-L1-init} holds, every weak solution
 $ (t,x)\in [0,+\infty)\times\Omega  \mapsto (E(t,x),F(t,x),M(t,x))^T$ of \eqref{eq:MosquitoLifemodelpde} satisfies
\begin{multline}
\|E(t,\cdot)\|_{L^1(\Omega)}+\|F(t,\cdot)\|_{L^1(\Omega)}+\|M(t,\cdot)\|_{L^1(\Omega)}\leq
\\Ce^{-\mu t}\left(\|E^0)\|_{L^1(\Omega)}+\|F^0\|_{L^1(\Omega)}+\|M^0\|_{L^1(\Omega)}\right) \; \forall t\geq 0.
\end{multline}
\end{theorem}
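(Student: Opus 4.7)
The plan is to mimic the ODE Lyapunov argument \eqref{defV-u=0pde}--\eqref{dotV<-sec2cpde} in $L^1$ by integrating the pointwise Lyapunov function in the spatial variable. Introduce
\[
W(t) := \frac{1+R}{1-R}\int_\Omega E(t,x)\,dx + \frac{2\beta_E}{\delta_F(1-R)}\int_\Omega F(t,x)\,dx + \int_\Omega M(t,x)\,dx.
\]
Since $R<1$ and $E,F,M\geq 0$, there are constants $c_1,c_2>0$, depending only on the biological parameters, such that
\[
c_1\bigl(\|E(t,\cdot)\|_{L^1(\Omega)}+\|F(t,\cdot)\|_{L^1(\Omega)}+\|M(t,\cdot)\|_{L^1(\Omega)}\bigr) \leq W(t) \leq c_2\bigl(\|E(t,\cdot)\|_{L^1(\Omega)}+\|F(t,\cdot)\|_{L^1(\Omega)}+\|M(t,\cdot)\|_{L^1(\Omega)}\bigr),
\]
so it suffices to prove $W(t)\leq W(0)e^{-\mu t}$ for some $\mu>0$.

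To control the evolution of $W$, I would test the weak formulations \eqref{eqintF} and \eqref{eqintM} against the admissible choice $\varphi\equiv 1$. The gradient term and the $\partial_t\varphi$ term vanish identically, leaving
\[
\int_\Omega F(t,x)\,dx - \int_\Omega F^0(x)\,dx = \int_0^t\!\int_\Omega(\nu\nu_E E - \delta_F F)\,dx\,ds,
\]
and the analogous identity for $M$. For $E$, I invoke Remark~\ref{rem-sens-ode} and the representation \eqref{expression-explicit-E}: since $F,M\in C^0([0,T];L^1(\Omega))\subset L^1((0,T)\times\Omega)$, Fubini lets me integrate in $x$ the pointwise identity $E(t,x)=E^0(x)+\int_0^t[A(s,x)-B(s,x)E(s,x)]\,ds$. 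Consequently $W$ is absolutely continuous, and its a.e.\ derivative is exactly the spatial integral of the ODE-pointwise expression for $\dot V$ given in \eqref{dotV-sec2pde-eq}.

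The algebraic rearrangement of \eqref{dotV-sec2pde-eq}--\eqref{dotV-sec2pde}, now applied under the integral sign, then yields
\[
W'(t) \leq \int_\Omega\bigl[-(\nu\nu_E+\delta_E)E - \beta_E F - \delta_M M\bigr]\,dx \leq -c_0 W(t),
\]
with $c_0$ as in \eqref{decaypde}; the two discarded terms in \eqref{dotV-sec2pde-eq} are nonpositive because $E,F,M,K\geq 0$. The Gronwall lemma for absolutely continuous functions gives $W(t)\leq W(0)e^{-c_0 t}$, and the norm-equivalence above closes the argument with $\mu=c_0$ and $C=c_2/c_1$. I expect the most delicate step to be the regularity justification in the previous paragraph: the weak-solution framework of Definition~\ref{def-sol-EFM} is minimal, and one must really rely on the Neumann-killed gradient terms (enabled by the constant test function) together with the pointwise ODE for $E$ in order to legitimately differentiate $W$ in time without any extra spatial regularity assumption on the data.
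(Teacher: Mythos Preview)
Your proposal is correct and follows essentially the same route as the paper: the same Lyapunov functional $W=L$, the same testing of \eqref{eqintF}--\eqref{eqintM} against $\varphi\equiv1$, the same pointwise treatment of the $E$-equation, and the same differential inequality with the constant $c_0$ of \eqref{decaypde}. The only cosmetic difference is that the paper phrases the inequality $\dot L\le -cL$ ``in the sense of distributions on $(0,+\infty)$'' rather than via absolute continuity of $W$, but these are equivalent here.
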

\begin{proof}
Let us now define $L:L^1(\Omega)^3\rightarrow \RR^3$ by
\begin{equation}
\label{defV-u=0pde-space}
	L((E,F,M)^T) :=\frac{1+R}{1-R} \int_\Omega E +\frac{2 \beta_E}{\delta_F(1-R)} \int_\Omega F + \int_\Omega M.
\end{equation}
(Compare with \eqref{defV-u=0pde}.) Note that, for every $(E,F,M)\in L^1(\Omega)^3$ satisfying
\begin{gather}
\label{Tout>0}
E\geq 0, \; F\geq 0, \text{ and } M \geq 0\text{ almost everywhere in } \Omega,
\end{gather}
one has
\begin{multline}
\label{encadrement-V}
a \left(\|E\|_{L^1(\Omega)}+\|F\|_{L^1(\Omega)}+\|M\|_{L^1(\Omega)}\right) \leq L((E,F,M)^T)\leq
\\
A \left(\|E\|_{L^1(\Omega)}+\|F\|_{L^1(\Omega)}+\|M\|_{L^1(\Omega)}\right)
\end{multline}
with
\begin{gather}
\label{def-a}
a:=\min\left\{\frac{1+R}{1-R},\frac{2 \beta_E}{\delta_F(1-R)} ,1\right\}>0,
\\
\label{def-A}
A:=\max\left\{\frac{1+R}{1-R},\frac{2 \beta_E}{\delta_F(1-R)} ,1\right\}>0.
\end{gather}
Let $(E^0,F^0,M^0)^T: \Omega \rightarrow [0,+\infty)^3$ be such that \eqref{assump-L1-init} holds and let $$(t,x)\in [0,+\infty)\times \Omega \mapsto (E(t,x),F(t,x),M(t,x))^T$$ be a weak solution of \eqref{eq:MosquitoLifemodelpde}. With a slight abuse of notation, let us define $L:[0,+\infty)\rightarrow [0,+\infty)$ by
\begin{gather}
\label{defL(t)}
L(t):=L((E(t,\cdot),F(t,\cdot),M(t,\cdot))^T),
\end{gather}
where $V$ is defined in \eqref{defV-u=0pde}.
 From \eqref{eqE-classic} one has, in the sense of distribution on $(0,T)$,
\begin{gather}
\label{eqintE-V}
\frac{\partial{E}}{\partial t}  \leq   \beta_EF - (\nu_E+\delta_E)E \text{ in } L^1((0,T)\times\Omega),
\end{gather}
which implies that, in the sense of distribution on $(0,T)$,
\begin{gather}
\label{eqintE-V-ineq}
\frac{d}{dt}\int_\Omega E(t,x)\;dx \leq
\int_\Omega\left(\beta_EF - (\nu_E+\delta_E)E\right).
\end{gather}
Taking $\varphi\equiv 1$ in \eqref{eqintF} and \eqref{eqintM}, we get for every $t\in [0,+\infty)$,
\begin{gather}
\label{eqintF-V}
\int_\Omega F(t,x)\;dx-\int_\Omega F^0(x)\;dx
=
\int_{Q_t}\left(\nu\nu_E E - \delta_FF\right),
\\
\label{eqintM-V}
\int_\Omega M(t,x)\;dx-\int_\Omega M^0(x)\;dx
=
\int_{Q_t}\left((1-\nu)\nu_EE - \delta_MM\right).
\end{gather}
Using \eqref{eqintE-V-ineq} and taking the derivative with respect to $t$ in \eqref{eqintF-V}  and \eqref{eqintM-V},  and using \eqref{defL(t)}, we get, in  the sense of distributions, on $(0,+\infty)$,
\begin{gather}
\label{dotV-sec2pde-Ms0}
\dot L (t) \leq -(\nu\nu_E+\delta_E)\int_\Omega E(t)
-\beta_E  \int_\Omega F(t) -\delta_M\int_\Omega M(t),
\end{gather}
In \eqref{dotV-sec2pde-Ms0} and in the following, we use the usual convention
$E(t)(x)=E(t,x)$, $F(t)(x)=F(t,x)$, and $M(t)(x)=M(t,x)$.
 From \eqref{defV-u=0pde-space}, \eqref{dotV-sec2pde-Ms0}, and \eqref{defL(t)}, one has
\begin{equation}
\label{dotV<-sec2cpde-new}
	 \dot L (t)\leq -c L(t),
\end{equation}
with
\begin{equation}
\label{decay-sansMs}
c := \min \left\{ \frac{(\nu \nu_E +\delta_E) (1-R)}{1+R} , \frac{\delta_F (1-R)}{2} ,  \delta_M
\right\}>0,
\end{equation}
which, using \eqref{encadrement-V}, \eqref{def-a}, and \eqref{def-A}, concludes the proof of Theorem~\ref{th-exp-satble-u=0-spaces}.
\end{proof}
\section{Reaction diffusion model for SIT}\label{see: SIT-RD}

The SIT model presented here is derived from an extinction of the temporal model \eqref{eq:MosquitoLifemodelpde}  by adding the sterile male mosquitoes. The study is conducted in a regular bounded open set $\Omega \subset \RR^{\dimEspace}$:
\begin{align}\label{eq:RDforSITpde}
  \;\;\left\{\begin{aligned}
        &\frac{\partial{E}}{\partial t} = \beta_EF\left(1-\frac{E}{K(x)}\right)\frac{\eta M}{1+\eta(M+\gamma M_s)} - (\nu_E+\delta_E)E, \; t\geq 0,\; x\in \Omega,\\
        &\frac{\partial{F}}{\partial t} - d_1\Delta F = \nu\nu_E E - \delta_FF, \; t\geq 0,\; x\in \Omega,\\
        &\frac{\partial{M}}{\partial t} - d_2\Delta M = (1-\nu)\nu_E E - \delta_MM, \; t\geq 0,\; x\in \Omega,\\
        &\frac{\partial{M_s}}{\partial t} - d_3\Delta M_s = u - \delta_s M_s, \; t\geq 0,\; x\in \Omega,\\
        &\frac{\partial{F}}{\partial n} =
        \frac{\partial{M}}{\partial n} = \frac{\partial{M_s}}{\partial n} = 0, \; t\geq 0,\; x\in \partial \Omega,
    \end{aligned}
    \right.
 \end{align}
where $M_s(t,x) \geq 0$ represents the sterile adult male density, $\delta_s > 0$ is the death rate of sterile adults, $u \geq 0$ is the control representing the density of sterile males released at each time and, as before, $\gamma$ is a positive constant representing the preference of females for sterile or fertile males.
The probability that a female mates with a fertile male is given by $\frac{\eta M}{1+\eta(M + \gamma M_s)}$.

\subsection{Well-posedness of the closed-loop system}
Concerning the feedback law $u : [0,+\infty)^4 \to \RR$, $(E,F,M,M_s)^T \mapsto u(y)$, we always assume the existence of  $C_u > 0$ such that
\begin{gather}
\label{Ugeq0-4var}
0 \leq u((E,F,M,M_s)^T) \leq C_u(1+E+F+M+M_s), \; \forall (E,F,M,M_s)^T \in [0,+\infty)^4.
\end{gather}
Moreover, for the uniqueness of the weak solution, we consider the following condition:
\begin{equation}
\label{reg-u-Lip}
\left\{
\begin{array}{l}
\text{For every  $\mathcal{E} > 0$, there exists $C_u(\mathcal{E}) > 0$ such that, for every} \\(E,F,M,M_s,\hat E, \hat F, \hat M,\hat M_s)^T
\in [0,+\infty)^8 \text{ satisfying
$E+\hat E \leq \mathcal{E}$},
\\
|u((E,F,M,M_s)^T)-u((\hat E,\hat F,\hat M,\hat M_s)^T)| \leq
\\
\phantom{bbbbbbbbbbbbbb}C_u(\mathcal{E})\left(|E-\hat E| + |F-\hat F| + \left(1+F+\hat F\right)\left(|M-\hat M| + |M_s-\hat M_s|\right)\right).
\end{array}
\right.
\end{equation}

We are interested in the Cauchy problem for the closed-loop system, i.e.
\begin{align}\label{eq:Mosquito-sit-u}
    \;\;\left\{\begin{aligned}
        &\frac{\partial{E}}{\partial t}    =  \beta_EF\bigg(1-\frac{E}{K(x)}\bigg)\frac{\eta M}{1+\eta (M+\gamma M_s)} - (\nu_E+\delta_E)E, \; t\geq 0,\; x\in \Omega,\\
    &\frac{\partial{F}}{\partial t} - d_1\Delta F = \nu\nu_E E - \delta_FF, t\geq 0,\; x\in \Omega,\\&
    \frac{\partial{M}}{\partial t} - d_2\Delta M =  (1-\nu)\nu_E E - \delta_MM,\; t\geq 0,\; x\in \Omega, \\
    &\frac{\partial{M_s}}{\partial t} - d_3\Delta M_s  =u((E,F,M,M_s)^T)-\delta_s M_s, \; t\geq 0,\; x\in \Omega,\\
    & \frac{\partial{F}}{\partial n} =
    \frac{\partial{M}}{\partial n}=\frac{\partial{M_s}}{\partial n}=0,\; t\geq 0,\; x\in \partial \Omega,
   \\& (E(0,x),F(0,x),M(0,x), M_s(0,x))^T = (E^0(x),F^0(x), M^0(x), M_s^0(x))^T,  \;x\in\Omega.
    \end{aligned}
    \right.
 \end{align}
  Following the same heuristic as for Definition~\ref{def-sol-EFM}, we adopt the following definition.
 \begin{definition}
\label{def-sol-EFMMs}
Let $T>0$. The application $(E,F,M,M_s)^T:[0,T]\times \Omega\rightarrow \RR^4$  is a weak solution of the Cauchy problem \eqref{eq:Mosquito-sit-u} on $[0,T]$ if
\begin{gather}
\label{EFMMsgeq0}
E, \; F,\; M \text{ and } M_s \text{ take values in  } [0,+\infty),
\\
\label{EFMMsC0L1}
E, \; F, \; M, \text{ and } M_s\text{ are in } C^0([0,T];L^1(\Omega)),
\\
\label{EMW11-new}
F, \; M, \text{ and } M_s \text{ are in } L^1((0,T);W^{1,1}(\Omega)),
\end{gather}
\begin{multline}
\label{eq-sur-E-SIT}
\frac{\partial{E}}{\partial t}(t,x)    =  \beta_EF(t,x)(1-\frac{E(t,x)}{K(x)})\frac{\eta M(t,x)}{1+\eta (M(t,x)+\gamma M_s(t,x))}
\\- (\nu_E+\delta_E)E(t,x)\;  t\in (0,T], \text{ for almost every } x\in \Omega,
\end{multline}
\begin{gather}
\label{initE-SIT}
E(0,x)=E^0(x) \text{ for almost every } x \in \Omega,
\end{gather}
and, for every $t \in [0,T]$ and for every $\varphi :  [0,T]\times \bar \Omega\rightarrow \RR$ of class $\mathcal{C}^1$, one has \eqref{eqintF}, \eqref{eqintM},
and
\begin{multline}
\label{eqintMs}
\int_\Omega M_s(t,x)\varphi(t,x)\;dx-\int_\Omega M_s^0(x)\varphi(0,x)\;dx
+d_3\int_{Q_t}\nabla M_s \cdot \nabla \varphi -
\int_{Q_t} M_s\frac{\partial \varphi}{\partial t}=
\\
\int_{Q_t}\left(u((E,F,M,M_s)^T)-\delta_sM_s\right)\varphi.
\end{multline}
Moreover $(E,F,M,M_s)^T:[0,+\infty)\times  \Omega\rightarrow \RR^4$  is a weak solution of the Cauchy problem \eqref{eq:Mosquito-sit-u} on $[0,+\infty)$ if, for every $T>0$, its restriction to $[0,T]\times \Omega$ is a weak solution of  \eqref{eq:Mosquito-sit-u} on $[0,T]$.
\end{definition}

(For the meaning of \eqref{eq-sur-E-SIT}, see Remark~\ref{rem-sens-ode} above). With this definition, one has the following well-posedness theorem, whose proof is given in Appendix~\ref{Appendix-B} and in Appendix~\ref{existenceinL1}.
\begin{theorem}
\label{th-well-posed-EFMMs}
Let $(E^0,F^0,M^0,M_s^0)^T:\Omega \rightarrow [0,+\infty)^4$ be such that
\begin{gather}
\label{assump-init-L1-Ms}
E^0 \in L^1(\Omega), \; F^0 \in L^1(\Omega), \; M^0 \in L^1(\Omega), \; \text{and} \; M_s^0 \in L^1(\Omega).
\end{gather}
Then, there exists a weak solution of  the Cauchy problem \eqref{eq:Mosquito-sit-u} on $[0,+\infty)$. Moreover, if one also assumes that \eqref{reg-u-Lip} holds and that
\begin{gather}
\label{assump-init-avec-Ms}
E^0 \in L^\infty(\Omega) \text{ and } F^0 \in L^r(\Omega)\text{ for some }r > 1,
\end{gather}
then this weak solution is unique.
\end{theorem}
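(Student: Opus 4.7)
The plan is to follow the approach already used for Theorem~\ref{th-well-posed-EFM}, with the new unknown $M_s$ treated on the same footing as $F$ and $M$. The coupling is asymmetric in a convenient way: $M_s$ enters the $(E,F,M)$ subsystem only through the bounded mating factor $\frac{\eta M}{1+\eta(M+\gamma M_s)}\in[0,1]$, while $(E,F,M)$ enter the $M_s$ equation only through $u$, which grows at most linearly by \eqref{Ugeq0-4var}. Consequently, the scheme used for Theorem~\ref{th-well-posed-EFM}---approximation and $L^1$-compactness for existence, Gronwall for uniqueness---transfers to the present setting with only minor adjustments.

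\emph{Existence.} I would approximate the initial data by sequences $(E^0_k,F^0_k,M^0_k,M_{s,k}^0)$ in $L^\infty(\Omega)^4$ converging in $L^1(\Omega)^4$, and truncate the feedback as $u_n:=\min(u,n)$. For each fixed approximation level, I would build a classical nonnegative solution on $[0,T]$ by a Picard iteration: given $(F,M,M_s)\in C^0([0,T];L^1(\Omega))^3$, recover $E$ via the explicit formula \eqref{expression-explicit-E}, then update $F$, $M$, and $M_s$ using the Neumann heat semigroup applied to the corresponding linear parabolic equations; boundedness of the Allee factor and of $u_n$ makes this map a contraction on a short time interval. Global extension is obtained from the uniform a priori bound
\begin{equation*}
\frac{d}{dt}\int_\Omega(E+F+M+M_s)\leq C_1\int_\Omega(E+F+M+M_s)+C_2,
\end{equation*}
produced by testing every equation with $\varphi\equiv 1$ (the Neumann boundary condition kills the diffusive fluxes and \eqref{Ugeq0-4var} controls $u$). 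Passing to the limit in the two approximation parameters proceeds as in the proof of Theorem~\ref{th-well-posed-EFM}, using $L^1$-compactness, the almost-everywhere convergence of $u_n$ to $u$, and dominated convergence controlled by the uniform $L^1$ bound.

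\emph{Uniqueness.} Since the right-hand side of \eqref{eq-sur-E-SIT} becomes non-positive whenever $E\geq K(x)$, the explicit formula \eqref{expression-explicit-E} yields the pointwise bound $\|E(t,\cdot)\|_{L^\infty}\leq\max(\|E^0\|_{L^\infty},\|K\|_{L^\infty})$. Using this $L^\infty$ source, the Neumann heat semigroup applied to the $F$-equation has $L^r\to L^\infty$ ultracontractivity rate $t^{-1/r}$ in dimension $2$, so that $F\in L^1(0,T;L^\infty(\Omega))$; the assumption $r>1$ in \eqref{assump-init-avec-Ms} is precisely what makes $t^{-1/r}$ integrable at $t=0$. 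Now let $(E_i,F_i,M_i,M_{s,i})$, $i=1,2$, be two weak solutions with the same data, and set $\Delta(t):=\|E_1-E_2\|_{L^1}+\|F_1-F_2\|_{L^1}+\|M_1-M_2\|_{L^1}+\|M_{s,1}-M_{s,2}\|_{L^1}$. Applying Kato's inequality to the three parabolic equations (the diffusive contributions integrate to non-positive quantities by the Neumann condition), differentiating $|E_1-E_2|$ pointwise via \eqref{eq-sur-E-SIT}, and using \eqref{reg-u-Lip} to estimate the $u$-difference, one obtains in the sense of distributions
\begin{equation*}
\frac{d}{dt}\Delta(t)\leq\bigl(C+C\|F_1(t,\cdot)\|_{L^\infty}+C\|F_2(t,\cdot)\|_{L^\infty}\bigr)\Delta(t),
\end{equation*}
the $(1+F_1+F_2)$-weighted mixed terms being controlled by $\|F_i\|_{L^\infty}\cdot(\|M_1-M_2\|_{L^1}+\|M_{s,1}-M_{s,2}\|_{L^1})$. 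Since $\int_0^T\|F_i(t,\cdot)\|_{L^\infty}\,dt<+\infty$, Gronwall's lemma gives $\Delta\equiv 0$, hence uniqueness.

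The main obstacle is gaining enough regularity on $F$ to absorb the $(1+F+\hat F)$ factor appearing in \eqref{reg-u-Lip} and---implicitly, via the derivative of the Allee factor with respect to $M$ and $M_s$---also in the difference of the $E$-equations: the $L^\infty$-bound on $E$ combined with the $L^r$-assumption on $F^0$ is exactly tailored so that the ultracontractive estimate of the two-dimensional Neumann heat semigroup yields $F\in L^1(0,T;L^\infty(\Omega))$, which is the integrability-in-time of $\|F\|_{L^\infty}$ required to run the Gronwall argument to completion.
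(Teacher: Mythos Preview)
Your existence argument is essentially the paper's: approximate the data and the feedback, build solutions for the regularized problems, obtain uniform $L^1$ bounds by testing with $\varphi\equiv 1$, and pass to the limit via the $L^1$-compactness lemma for reaction--diffusion systems (in the paper this is \cite[Lemma~5.6]{2010-Pierre-MJM}). That part matches Appendix~\ref{existenceinL1}.

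Your uniqueness argument, however, is genuinely different. The paper (Appendix~\ref{Appendix-B}) sets up a fixed-point map $\tilde{\mathcal Q}:(e,m_s)\mapsto (E,M_s)$ on $C^0([0,T];L^1\times L^1)$, with the intermediate $f,m$ recovered from $e$ via the heat semigroup, and proves that $\tilde{\mathcal Q}$ is a contraction for small $T$; the key step is the smoothing estimate $\|S_1(t)\phi\|_{L^\infty}\le Ct^{-1/r}\|\phi\|_{L^r}$, which yields $\|f(t)\|_{L^\infty}\le Ct^{-1/r}(\|F^0\|_{L^r}+1)$ and hence the contraction bound \eqref{esti-E-hatE-Ms-hatMs}. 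You instead take two weak solutions directly, derive $F_i\in L^1(0,T;L^\infty(\Omega))$ from the same smoothing estimate (using that $E_i\in L^\infty$ thanks to $E^0\in L^\infty$), and close an $L^1$-Gronwall loop on the sum of the four differences, invoking the $L^1$-contraction of the Neumann heat semigroup (your ``Kato's inequality'') for the parabolic components. Both routes hinge on the same regularity mechanism---$E^0\in L^\infty$ forces $E\in L^\infty$, which together with $F^0\in L^r$, $r>1$, makes $t\mapsto\|F(t)\|_{L^\infty}$ integrable---so neither relaxes the hypotheses. The paper's contraction approach has the advantage of simultaneously producing existence for the regular-data case and of being constructive; your Gronwall approach is shorter and avoids having to iterate the map, but one should note that it tacitly uses that any weak solution in the sense of Definition~\ref{def-sol-EFMMs} satisfies the Duhamel formula (this is the content of Lemma~\ref{lem-very-weak} in the paper), so that the semigroup $L^1$-contraction can be applied to $F_1-F_2$, $M_1-M_2$, $M_{s,1}-M_{s,2}$.
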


\begin{remark} Once more, it would be interesting to know if the uniqueness of the weak solution also holds without assuming \eqref{assump-init-avec-Ms}.
\end{remark}

\subsection{Design of state-feedback controllers}\label{see:GlobalStabilitysection}

In this section, we construct a control law to stabilize the mosquito population at zero in $\Omega$. The construction process is based on the backstepping method for stabilizing ODE control systems. This approach was previously applied to the ODE SIT model in a prior study \cite{bidi2024feedback}; see also \cite{bidi_global_2025, 2023Rossi} for other ODE SIT models.
Let $\theta \in [0,+\infty)$. We assume that, for every $(t,x)\in[0,T]\times\Omega$,  $M_s(t,x)=\theta M(t,x)$. This leads to the consideration of the Cauchy problem

\begin{align}\label{eq:Mosquito-reduced}
    \;\;\left\{\begin{aligned}
        &\frac{\partial{E}}{\partial t}    =  \beta_EF\bigg(1-\frac{E}{K(x)}\bigg)\frac{ \eta M}{1+ \eta (1+\gamma\theta)M} - (\nu_E+\delta_E)E, \; t\geq 0,\; x\in \Omega,\\
    &\frac{\partial{F}}{\partial t} - d_1\Delta F = \nu\nu_E E - \delta_FF, \; t\geq 0,\; x\in \Omega,\\
    &\frac{\partial{M}}{\partial t} - d_2\Delta M =  (1-\nu)\nu_E E - \delta_MM,\; t\geq 0,\; x\in \Omega, \\
    & \frac{\partial{F}}{\partial n} =
    \frac{\partial{M}}{\partial n}=0,\; t\geq 0,\; x\in \partial \Omega,
   \& (E(0,x),F(0,x),M(0,x))^T = (E^0(x),F^0(x), M^0(x))^T,  \;x\in\Omega .
    \end{aligned}
    \right.
 \end{align}
Of course, Theorem~\ref{th-well-posed-EFM} dealing with  the Cauchy problem  \eqref{th-well-posed-EFM} also holds for the Cauchy problem \eqref{eq:Mosquito-reduced}.
Let us define
\begin{gather}
    \mathcal{R}(\theta):=\frac{R}{1+\gamma
    \theta}.
\end{gather}
Then, one has the following proposition.
\begin{proposition}
\label{prop-case-thetaMpde-GAS}
	Assume that
\begin{align}
	\mathcal{R}(\theta)<1 \label{eq:inequalitythetapde}.
\end{align} Then there exists $C>0$ and $\mu>0$ such that, for every $(E^0,F^0,M^0)^T: \Omega \rightarrow [0,+\infty)^3$ such that \eqref{assump-L1-init} holds, every weak solution
 $ (t,x)\in [0,+\infty)\times\Omega  \mapsto (E(t,x),F(t,x),M(t,x))^T$ of \eqref{eq:Mosquito-reduced} satisfies
\begin{multline}
\|E(t,\cdot)\|_{L^1(\Omega)}+\|F(t,\cdot)\|_{L^1(\Omega)}+\|M(t,\cdot)\|_{L^1(\Omega)}\leq
\\Ce^{-\mu t}\left(\|E^0)\|_{L^1(\Omega)}+\|F^0\|_{L^1(\Omega)}+\|M^0\|_{L^1(\Omega)}\right) \; \forall t\geq 0.
\end{multline}
\end{proposition}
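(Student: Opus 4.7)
The plan is to mimic the proof of Theorem~\ref{th-exp-satble-u=0-spaces}, exploiting the single new structural observation that in the reduced system \eqref{eq:Mosquito-reduced} the Allee-type factor satisfies
$$\frac{\eta M}{1+\eta(1+\gamma\theta)M} \leq \frac{1}{1+\gamma\theta},$$
so that the $E$-equation yields the pointwise bound
$$\frac{\partial E}{\partial t} \leq \frac{\beta_E}{1+\gamma\theta}\,F - (\nu_E+\delta_E)E.$$
In effect, replacing $\beta_E$ by $\beta_E/(1+\gamma\theta)$ turns the basic offspring number $R$ into exactly $\mathcal{R}(\theta)$, which is strictly less than $1$ by assumption \eqref{eq:inequalitythetapde}. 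The plan is therefore to rerun the Lyapunov computation of Theorem~\ref{th-exp-satble-u=0-spaces} verbatim with these substitutions.

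Concretely, I would introduce the modified Lyapunov functional
$$L((E,F,M)^T) := \frac{1+\mathcal{R}(\theta)}{1-\mathcal{R}(\theta)}\int_\Omega E + \frac{2\beta_E/(1+\gamma\theta)}{\delta_F(1-\mathcal{R}(\theta))}\int_\Omega F + \int_\Omega M,$$
which on the positive cone is equivalent to $\|E\|_{L^1}+\|F\|_{L^1}+\|M\|_{L^1}$ with explicit constants $a(\theta), A(\theta)>0$ in analogy with \eqref{encadrement-V}. Then, exactly as in \eqref{eqintE-V-ineq}, the pointwise bound above integrates to a distributional inequality for $\frac{d}{dt}\int_\Omega E$, while choosing $\varphi\equiv 1$ in \eqref{eqintF} and \eqref{eqintM} eliminates the diffusion contributions (thanks to the Neumann boundary conditions) and provides exact mass balance for $\int_\Omega F$ and $\int_\Omega M$.

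Combining the three identities and using $\frac{\beta_E \nu\nu_E}{(1+\gamma\theta)\delta_F(\nu_E+\delta_E)}=\mathcal{R}(\theta)$ to cancel the cross terms, a direct computation (mirroring the one leading to \eqref{dotV-sec2pde-Ms0}) yields
$$\dot L(t) \leq -(\nu\nu_E+\delta_E)\int_\Omega E - \frac{\beta_E}{1+\gamma\theta}\int_\Omega F - \delta_M\int_\Omega M \leq -c\,L(t),$$
with
$$c := \min\!\left\{\frac{(\nu\nu_E+\delta_E)(1-\mathcal{R}(\theta))}{1+\mathcal{R}(\theta)},\, \frac{\delta_F(1-\mathcal{R}(\theta))}{2},\, \delta_M\right\}>0.$$
A Gr\"onwall argument in the distributional sense then gives $L(t)\leq e^{-ct}L(0)$, and sandwiching by the $L^1$-norms through $a(\theta), A(\theta)$ delivers the claimed exponential estimate with $\mu=c$ and $C=A(\theta)/a(\theta)$.

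The only real (and rather mild) obstacle is bookkeeping: one must verify that the algebraic identities used to kill the $\int_\Omega E$ contribution in the original ODE Lyapunov calculation still hold with $R$ replaced by $\mathcal{R}(\theta)$ and $\beta_E$ replaced by $\beta_E/(1+\gamma\theta)$ in the $F$-term of $L$. No new analytic difficulty arises relative to Theorem~\ref{th-exp-satble-u=0-spaces}, since the diffusion terms integrate to zero and \eqref{eqintE-V-ineq}-type arguments apply unchanged to weak solutions, whose existence and relevant regularity are already granted by the analogue of Theorem~\ref{th-well-posed-EFM} for \eqref{eq:Mosquito-reduced}.
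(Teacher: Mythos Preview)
Your proposal is correct and follows essentially the same route as the paper: both mimic the proof of Theorem~\ref{th-exp-satble-u=0-spaces} by bounding the Allee factor $\frac{\eta M}{1+\eta(1+\gamma\theta)M}\leq \frac{1}{1+\gamma\theta}$, choosing a linear Lyapunov functional on $L^1(\Omega)^3$, killing the diffusion with $\varphi\equiv 1$, and closing with Gr\"onwall. The only cosmetic difference is in the specific coefficients: the paper takes
\[
L((E,F,M)^T)=\frac{1+(1+\theta\gamma)\mathcal{R}(\theta)}{1-\mathcal{R}(\theta)}\int_\Omega E+\frac{\beta_E(2+\theta\gamma)}{\delta_F(1+\theta\gamma)(1-\mathcal{R}(\theta))}\int_\Omega F+\int_\Omega M,
\]
whereas you make the cleaner direct substitution $\beta_E\mapsto \beta_E/(1+\gamma\theta)$, $R\mapsto\mathcal{R}(\theta)$ in \eqref{defV-u=0pde-space}. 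Both choices work; the paper's coefficients are selected so that the same $V$ reappears in the backstepping construction \eqref{def-V-for-SIT-u} for Theorem~\ref{see:stabiliyThm}, which is why they differ from the most obvious ones.
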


\noindent
\begin{proof} The proof of this proposition is essentially the same as the proof of Theorem~\ref{th-exp-satble-u=0-spaces}.
Let us now define $L:L^1(\Omega)^3\rightarrow \RR^3$ by (compare with \eqref{defV-u=0pde-space} and \eqref{defV-u=0pde})
\begin{equation}
\label{defV-u=theta-space}
	L((E,F,M)^T) :=\frac{1+\mathcal{R}(\theta)(1+\theta \gamma)}{1-\mathcal{R}(\theta)} \int_\Omega E +\frac{ \beta_E(2+\theta\gamma)}{\delta_F(1+\theta\gamma)(1-\mathcal{R}(\theta))} \int_\Omega F + \int_\Omega M.
\end{equation}

 Note that, for every $(E,F,M)\in L^1(\Omega)^3$ satisfying \eqref{Tout>0},
one still has \eqref{encadrement-V} but now with
\begin{gather}
\label{def-a-theta}
a:=\min\left\{\frac{1+\mathcal{R}(\theta)(1+\theta\gamma)}{1-\mathcal{R}(\theta)},\frac{\beta_E(2+\theta\gamma)}{\delta_F(1+\theta\gamma)(1-\mathcal{R}(\theta))} ,1\right\}>0,
\\
\label{def-A-theta}
A:=\max\left\{\frac{1+\mathcal{R}(\theta)(1+\theta\gamma)}{1-\mathcal{R}(\theta)},\frac{\beta_E(2+\theta\gamma)}{\delta_F(1+\theta\gamma)(1-\mathcal{R}(\theta))} ,1\right\}>0.
\end{gather}
One still gets \eqref{dotV<-sec2cpde-new},
but now with $c$ defined by (compare with \eqref{decay-sansMs})
\begin{equation}
\label{decay-sansMs-theta}
c := \min \left\{ \frac{(\nu \nu_E +\delta_E) (1-\mathcal{R}(\theta))}{1+(1+\theta\gamma)\mathcal{R}(\theta)} , \frac{\delta_F (1+\theta\gamma)(1-\mathcal{R}(\theta))}{2+\theta\gamma} ,  \delta_M
\right\}>0.
\end{equation}
As in the proof of Theorem~\ref{th-exp-satble-u=0-spaces}, this concludes the proof of Proposition~\ref{prop-case-thetaMpde-GAS}.
\end{proof}

 Let us now move on to the stabilization of system \eqref{eq:RDforSITpde}.
As we will see, the proof of Proposition \ref{prop-case-thetaMpde-GAS}, together with the backstepping method (see, for example, \cite[pages 334--335]{2007-Coron-book}), leads us to consider the feedback law
$y = (E,F,M, M_s)^T \mapsto u(y) \in [0, +\infty)$, where $u: [0, +\infty)^4 \to [0, +\infty)$ is defined by
\begin{gather}
\label{eq:backcontr1PDE}
    u(y):=F\frac{\psi M(\theta M+M_s)^2}{\alpha(M+\gamma M_s+\kappa)(\kappa+(1+\gamma\theta)M)(3\theta M+M_s)}+ \theta(\delta_s-\delta_M)M\frac{(\theta M + 3M_s)}{(3\theta M+M_s)},
\end{gather}
 where
 \begin{gather}
 \kappa:=\frac{1}{\eta},
 \\
 \label{defpsi}
\psi : = \frac{\gamma\beta_E(1+\mathcal{R}(\theta)(1+\theta\gamma))}{1-\mathcal{R}(\theta)}..
\end{gather}
Until the end of this section, we assume that
\begin{gather}
\delta_s\geq\delta_M, \label{eq:edpdeltas>deltaM}
\end{gather}
a condition which is biologically reasonable due to the fitness being potentially reduced bu the sterilization procedure. Then, \eqref{eq:backcontr1PDE}  implies that $u(y)\geq 0$. 

Even if the feedback law  \eqref{eq:backcontr1PDE} is not of class $C^1$, it satisfies both
 \eqref{Ugeq0-4var} and \eqref{reg-u-Lip}. In particular Theorem~\ref{th-well-posed-EFMMs} can be applied for this feedback law.

With the feedback \eqref{eq:backcontr1PDE}, one has the following global stability result.
\begin{theorem}\label{see:stabiliyThm}
Assume that $d_2=d_3$. Let us choose $\theta$ large enough so that  \eqref{eq:inequalitythetapde} holds and let us choose $\alpha>0$ small enough so that
\begin{equation}
\label{assumptionalphatetit}
1-\frac{3\alpha\theta(1-\nu)\nu_E}{2(\nu\nu_E+\delta_E)}>0.
\end{equation}
Then there exists $C>0$ and $\mu>0$ such that, for every $(E^0,F^0,M^0, M_s^0)^T:\Omega \rightarrow [0,+\infty)^4$ such that \eqref{assump-init-L1-Ms} and \eqref{assump-init-avec-Ms} hold
\begin{multline}
\|E(t,\cdot)\|_{L^1(\Omega)}+\|F(t,\cdot)\|_{L^1(\Omega)}+\|M(t,\cdot)\|_{L^1(\Omega)}+\|M_s(t,\cdot)\|_{L^1(\Omega)}\leq
\\ Ce^{-\mu t}\left(\|E^0\|_{L^1(\Omega)}+\|F^0\|_{L^1(\Omega)}+\|M^0\|_{L^1(\Omega)}+ \|M_s^0\|_{L^1(\Omega)}\right) \; \forall t\geq 0,
\label{estimate-exponential}
\end{multline}
where $(t,x)\in [0,+\infty)\times \Omega \mapsto (E(t,x),F(t,x),M(t,x), M_s(t,x))^T\in [0,+\infty)^4$ is the weak solution of the closed-loop system \eqref{eq:Mosquito-sit-u} with the feedback law \eqref{eq:backcontr1PDE} satisfying the initial condition
\begin{gather}
\label{init-EFMSs0}
E(0,x)=E^0(x),\;F(0,x)=F^0(x),\;M(0,x)=M^0(x),\;M_s(0,x)=M_s^0(x)\; \forall x\in \Omega.
\end{gather}
\end{theorem}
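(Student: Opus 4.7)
The plan is to combine the Lyapunov functional used in Proposition~\ref{prop-case-thetaMpde-GAS} with a backstepping penalty term for the deviation of $M_s$ from $\theta M$, directly transposing to PDEs the construction discussed at the end of Section~\ref{NewODE}. Concretely, I set
\begin{equation*}
\mathcal{L}(E,F,M,M_s) := L((E,F,M)^T) + \alpha \int_\Omega \frac{(M_s-\theta M)^2}{\theta M + M_s}\,dx,
\end{equation*}
with $L$ the functional from \eqref{defV-u=theta-space}, and first verify that on the non-negative cone $\mathcal{L}$ is equivalent to $\|(E,F,M,M_s)\|_{L^1(\Omega)^4}$. The upper bound follows from $(M_s-\theta M)^2/(\theta M+M_s)\leq \theta M + M_s$. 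For the lower bound I split pointwise: on $\{M_s\leq 2\theta M\}$ the function $M_s$ is controlled by the $M$-term already in $L$, while on $\{M_s>2\theta M\}$ a short computation gives $(M_s-\theta M)^2/(\theta M+M_s)\geq M_s/6$, so the penalty density controls $M_s$ there.

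Next I differentiate $\mathcal L$ along a weak solution of \eqref{eq:Mosquito-sit-u}. The contribution of the first three terms of $\mathcal{L}$ is handled essentially as in the proof of Proposition~\ref{prop-case-thetaMpde-GAS}, except that the pointwise bound $\eta M/(1+\eta(M+\gamma M_s))\leq 1/(1+\gamma\theta)$ used there when $M_s=\theta M$ is no longer valid and leaves a discrepancy that the penalty is designed to absorb. For the penalty term, set $\phi(M,M_s):=(M_s-\theta M)^2/(\theta M+M_s)$ and test the $M$ and $M_s$ equations against $\phi_M$ and $\phi_{M_s}$ respectively; since $d_2=d_3$, the diffusion contribution equals
\begin{equation*}
d_2\int_\Omega \left(\phi_{MM}|\nabla M|^2 + 2\phi_{MM_s}\nabla M\cdot\nabla M_s + \phi_{M_sM_s}|\nabla M_s|^2\right) dx,
\end{equation*}
which is non-negative because $\phi$ is convex on $(0,+\infty)^2$, and therefore contributes favourably to $-\dot{\mathcal L}$. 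The reaction contribution involves the explicit right-hand sides together with $u$; the feedback~\eqref{eq:backcontr1PDE} has been engineered so that its $F$-linear part cancels the destabilising cross term $\beta_E F \eta M/(1+\eta(M+\gamma M_s))$ coming from $\dot L$ and the penalty, while the $\theta(\delta_s-\delta_M) M$ piece compensates the mismatch between $\delta_s M_s$ and $\delta_M \theta M$. Under \eqref{assumptionalphatetit}, which lets the residual term with factor $3\alpha\theta(1-\nu)\nu_E/2$ be absorbed into the $E$-coefficient of $L$, one obtains $\dot{\mathcal L}\leq -c\,\mathcal{L}$ for some $c>0$, and the estimate \eqref{estimate-exponential} then follows by Gronwall and the $L^1$-equivalence.

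The main obstacle is not the algebra of this Lyapunov estimate but the rigorous justification of the differentiation at the level of weak solutions, which only live in $C^0([0,T];L^1)\cap L^1((0,T);W^{1,1})$ and for which $\phi$ is not of class $C^1$ at the origin. The plan is to regularise $\phi$, for instance by replacing its denominator with $\theta M+M_s+\varepsilon$, and to approximate the initial data by $L^\infty$ functions so that the resulting solutions are classical and the formal computation above is licit; the estimate $\dot{\mathcal L}_\varepsilon\leq -c\,\mathcal{L}_\varepsilon$ then passes to the limit as $\varepsilon\to 0^+$ and as the approximating data converge to the original ones, the uniqueness statement of Theorem~\ref{th-well-posed-EFMMs} under \eqref{assump-init-avec-Ms} serving to identify the limit. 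The extra integrability in \eqref{assump-init-avec-Ms} is precisely what is needed so that both the convexity-based non-negativity of the diffusion contribution and the pointwise feedback cancellations survive this limit.
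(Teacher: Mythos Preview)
Your approach is essentially the paper's: the same Lyapunov functional $L+\alpha\int_\Omega(M_s-\theta M)^2/(\theta M+M_s)$, the same use of $d_2=d_3$ to make the diffusion contribution sign-definite via convexity of the penalty density, the same feedback cancellations, and the same passage from regular solutions (obtained from $L^\infty$ initial data) to general $L^1$ data by approximation and the convergence results behind Theorem~\ref{th-well-posed-EFMMs}.

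The only notable deviation is your proposed $\varepsilon$-regularisation of the denominator of $\phi$. The paper does not do this: once $F^0,M^0,M_s^0$ are approximated in $L^\infty$ (and $E^0$ is already in $L^\infty$ by \eqref{assump-init-avec-Ms}), the corresponding trajectories are regular enough that the formal Lyapunov computation with the original $\phi$ is licit, and one then passes to the limit in the initial data only. Your extra layer is not wrong, but it creates a small mismatch---the feedback \eqref{eq:backcontr1PDE} is tailored to the exact partial derivatives of $\phi$, so with $\phi_\varepsilon$ the cancellations are only approximate and you would need to track $O(\varepsilon)$ error terms before sending $\varepsilon\to 0$. That is doable but unnecessary; following the paper and regularising only the data is cleaner.
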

\begin{remark}
It would be interesting to know if in this theorem one could remove the assumption \eqref{assump-init-avec-Ms} and get \eqref{estimate-exponential} for \textit{every} weak solution of the closed-loop system \eqref{eq:Mosquito-sit-u} with the feedback law \eqref{eq:backcontr1PDE} satisfying the initial condition \eqref{init-EFMSs0}. Note that if we remove the assumption \eqref{assump-init-avec-Ms}, we have the existence of a weak solution but we do not know if this solution is unique.
\end{remark}
\begin{proof}
Let $V:[0,+\infty)^3\rightarrow [0,+\infty)$ be defined by (compare with \eqref{defV-u=theta-space})
\begin{gather}
\label{def-V-for-SIT-u}
V((E,F,M)^T)=\frac{1+(1+\theta\gamma)\mathcal{R}(\theta)}{1-\mathcal{R}(\theta)} E
+\frac{\beta_E(2+\theta\gamma)}{\delta_F(1+\theta\gamma)(1-\mathcal{R}(\theta))}  F + M.
\end{gather}
 Following the backstepping method, we penalize the inequality $M_s\not = \theta M$ by considering the Lyapunov function $U:L^1(\Omega;[0,+\infty))^4\rightarrow [0,+\infty)$, $(E,F,M,M_s)^T\mapsto U((E,F,M,M_s)^T)$ defined by 
\begin{align}
\label{defUpde}
    U:=
        \int_\Omega V+ \alpha\int_\Omega \frac{(\theta M- M_s)^2}{\theta M +M_s}, 
\end{align}
with the convention
\begin{equation}
\frac{(\theta M- M_s)^2}{\theta M +M_s}=0 \text{ if }M=M_s=0.
\end{equation}
Simple computations show that, for every $(M,M_s)^T\in [0,+\infty)^2$ such that $M+M_s>0$,
\begin{gather}
\label{ineqMM_s}
\min\{\alpha,\frac{1}{1+\theta}\}(M+M_s)\leq M+\alpha \frac{(\theta M- M_s)^2}{\theta M +M_s}\leq \max\{\alpha,(1+\alpha \theta)\}(M+M_s),
\end{gather}
which leads to
\begin{gather}\label{U1andyinequality}
    k_1'\norm{y}_{L^1(\Omega)^4}\leq U\leq k_2'\norm{y}_{L^1(\Omega)^4},\;\forall y\in L^1(\Omega;[0,+\infty))^4,
\end{gather}
where

\begin{gather}
k_1':= \min\{\frac{1+(1+\theta\gamma)\mathcal{R}(\theta)}{1-\mathcal{R}(\theta)},\alpha,\frac{1}{1+\theta},
    \frac{\beta_E(2+\theta\gamma)}{\delta_F(1+\theta\gamma)(1-\mathcal{R}(\theta))}\},
\\
k_2':= \max\{\frac{1+(1+\theta\gamma)\mathcal{R}(\theta)}{1-\mathcal{R}(\theta)},\alpha, 1+\alpha\theta,\frac{\beta_E(2+\theta\gamma)}{\delta_F(1+\theta\gamma)(1-\mathcal{R}(\theta))}\}.
\end{gather}

Let us define  $W:[0,+\infty)^4\rightarrow [0,+\infty)$ by
\begin{gather}
\label{defW}
    W(y) := V(y)+\alpha \frac{(\theta M- M_s)^2}{\theta M +M_s}.
\end{gather}
Let $v:=(E,M,F)^T$, so that $y=(v^T,M_s)^T$. With a slight abuse of notation, we define $V(y):=V(v)$.
Let
\begin{equation}
\label{defH}
H(y) := \left( \begin{array}{ccc} \beta_E F \left(1-\frac{E}{K}\right)\frac{ \eta M}{1+ \eta (M+\gamma M_s)} - \big( \nu_E + \delta_E \big) E\\  (1-\nu)\nu_E E - \delta_M M\\  \nu\nu_EE - \delta_F F
\\
u(y)-\delta_sM_s
    \end{array}
    \right).	
\end{equation}
Let $T>0$. Let us consider, for the moment, trajectories $t\in [0,T]\rightarrow [0,+\infty)^4$ of \eqref{eq:Mosquito-sit-u}  such that
\begin{gather}
\label{smooth-trajectories}
y_i\in H^{1,2}(Q_T)\;\forall i\in\{2,3,4\},
\end{gather}
where
\begin{multline}
H^{1,2}(Q_T):=\left\{\phi : Q_T\rightarrow \RR:\; \phi, \frac{\partial \phi}{\partial t},\, \frac{\partial \phi}{\partial x_1},\;
\frac{\partial \phi}{\partial x_2},  \frac{\partial^2 \phi}{\partial x_1^2},\frac{\partial^2 \phi}{\partial x_1x_2}, \text{ and } \frac{\partial^2 \phi}{\partial x_2^2} \text{ are in } L^2(Q_T)\right\}.
\end{multline}
Along these trajectories of \eqref{eq:Mosquito-sit-u}, using once more  Stokes' theorem,
\begin{align}
    \frac{d U}{d t} &= \int_{\Omega} \left(\nabla V(y)\cdot H(y)+ \alpha \nabla\Big( \frac{(\theta M- M_s)^2}{\theta M + M_s}\Big)\cdot H(y)\right) - \int_\Omega \sum_{i=2}^{4} d_{i-1} \nabla y_i\cdot\nabla (\frac{\partial W}{\partial y_i}).
    \label{DudtInt}
    \end{align}
Note that
\begin{multline}
    \int_{\Omega} \nabla V(y)\cdot H(y)=\int_{\Omega} \nabla V(y)\cdot H((v^T,\theta M)^T)\\
    + \int_{\Omega} \nabla V(y)\cdot (H((v^T,M_s)^T)-H((v^T,\theta M)^T)),
\end{multline}
and that, from \eqref{def-V-for-SIT-u} and \eqref{defH},
\begin{multline}
	\nabla V(y)\cdot (H((v^T,M_s)^T)-H((v^T,\theta M)^T))=\\
\frac{1+(1+\theta\gamma)\mathcal{R}(\theta)}{1-\mathcal{R}(\theta)} \beta_EF(1-\frac{E}{K}) \frac{\gamma M (\theta M-M_s)}{(M+\gamma M_s+\kappa)(\kappa+(1+\gamma\theta)M)}.
\end{multline}
Hence, using also \eqref{defpsi}, we have
\begin{gather}\label{nablavf}
    \nabla V(y)\cdot H(y) = \nabla V(v)\cdot H((v^T,\theta M)^T)  +  F(1-\frac{E}{K})\frac{\psi M(\theta M-M_s)}{(M+\gamma M_s+\kappa)(\kappa+(1+\gamma\theta)M)}.
\end{gather}
Moreover, for the second term in \eqref{DudtInt}, we have
\begin{multline}\label{nablaIf}
      \alpha \nabla\Big( \frac{(\theta M- M_s)^2}{\theta M + M_s}\Big)\cdot H(y) =\alpha\frac{\theta M-M_s}{(\theta M + M_s)^2}\Big(\theta((1-\nu)\nu_EE-\delta_MM)(\theta M + 3M_s)\\-u(y)(3\theta M+M_s)+\delta_s{M}_s(3\theta M+M_s)\Big)
\end{multline}
 From \eqref{dotV-sec2pde-eq}, \eqref{DudtInt} and \eqref{nablaIf}, we get
 \begin{multline}\label{dudt2}
    \frac{d U(t)}{dt}\leq
      -\beta_E \int_{\Omega}F-\delta_M \int_{\Omega}M -\frac{1+(1+\theta\gamma)\mathcal{R}(\theta)}{1-\mathcal{R}(\theta)}
     \frac{\beta_E}{K}\int_{\Omega} \frac{ MFE}{(1+\gamma\theta)M+\kappa} - (\nu\nu_E+\delta_E)\int_{\Omega}E
     \\+\alpha\theta(1-\nu)\nu_E\int_\Omega \frac{\theta M(\theta M+3M_s)}{(\theta M + M_s)^2}E
     \\+\int_\Omega \alpha\frac{\theta M-M_s}{(\theta M + M_s)^2}\Big( F(1-\frac{E}{K})\frac{\psi M(\theta M+M_s)^2}{\alpha(M+\gamma M_s+\kappa)(\kappa+(1+\gamma\theta)M)} - \theta\delta_MM(\theta M + 3M_s)\\-u(y)(3\theta M+M_s)+\delta_s{M}_s(3\theta M+M_s)\Big) - \int_\Omega \sum_{i=2}^{4} d_{i-1}\nabla y_i\cdot\nabla (\frac{\partial W}{\partial y_i}).
\end{multline}
Note that
\begin{gather}
\label{MMsleq3/2}
    \frac{\theta M (\theta M+3M_s)}{(\theta M + M_s)^2}\leq 3 \;\;\text{ in  } \Omega
\end{gather}
and that
\begin{multline}
\label{estimatnunuEetc}
    - (\nu\nu_E+\delta_E)\int_\Omega E +\alpha\theta(1-\nu)\nu_E\int_\Omega \frac{M(\theta M+3M_s)}{(\theta M + M_s)^2}E\leq \\ -(\nu\nu_E+\delta_E)\Big(1-\frac{3\alpha\theta(1-\nu)\nu_E}{(\nu\nu_E+\delta_E)}\Big)\int_\Omega E
\end{multline}
Let
\begin{equation}
\label{defsigma}
 \sigma :=1-\frac{3\alpha\theta(1-\nu)\nu_E}{(\nu\nu_E+\delta_E)}.
\end{equation}
Note that \eqref{assumptionalphatetit} implies that
\begin{equation}
\label{sigma>0}
 \sigma >0.
\end{equation}
 From \eqref{dudt2}, \eqref{MMsleq3/2},  \eqref{estimatnunuEetc}, \eqref{defsigma}, and \eqref{sigma>0}, one gets
\begin{eqnarray}
\label{estdUdt-inter}
    \frac{dU(t)}{dt}&\leq &-\beta_E \int_{\Omega}F-\delta_M \int_{\Omega}M - (\nu\nu_E+\delta_E)\sigma\int_{\Omega}E
   -\alpha(1-\nu)\nu_E\int_\Omega \frac{M_s(\theta M+3M_s)}{(\theta M + M_s)^2}E\nonumber \\&&+\int_\Omega \alpha\frac{\theta M-M_s}{(\theta M + M_s)^2}\Big( F(1-\frac{E}{K})\frac{\psi M(\theta M+M_s)^2}{\alpha(M+\gamma M_s+\kappa)(\kappa+(1+\gamma\theta)M)} - \theta\delta_MM(\theta M + 3M_s)\nonumber\\&&-u(y)(3\theta M+M_s)+\delta_s{M}_s(3\theta M+M_s)\Big) - \alpha\int_\Omega \sum_{i=2}^{4} d_{i-1}\nabla y_i\cdot\nabla (\frac{\partial W}{\partial y_i}).
\end{eqnarray}
 From \eqref{defW}, we have
\begin{eqnarray}
    \sum_{i=2}^{4} d_{i-1}\nabla y_i\cdot\nabla (\frac{\partial W}{\partial y_i})&=& d_1 \nabla F\cdot\nabla (\frac{\partial W}{\partial F})+d_2 \nabla M\cdot\nabla (\frac{\partial W}{\partial M}) + d_3 \nabla M_s \cdot\nabla (\frac{\partial W}{\partial M_s}) \nonumber\\&=&d_2\frac{8\theta^2M_s^2}{(\theta M+M_s)^3}\vert\nabla M\vert^2+ d_3\frac{8\theta^2M^2}{(\theta M+M_s)^3}\vert\nabla M_s\vert^2
    \nonumber \\
    &&\phantom{bbbbb}
    -(d_2+d_3)\frac{8\theta^2MM_s}{(\theta M+M_s)^3}\nabla M\cdot\nabla M_s.\nonumber
\end{eqnarray}
For $d_2=d_3=d$ and $\frac{\partial W}{\partial F}=0$, this gives
\begin{eqnarray}\label{nablaynablaW}
    \sum_{i=2}^{4} d_{i-1}\nabla y_i\cdot\nabla (\frac{\partial W}{\partial y_i})&=&d\frac{8\theta^2M_s^2}{(\theta M+M_s)^3}\vert\nabla M\vert^2+ d\frac{8\theta^2M^2}{(\theta M+M_s)^3}\vert\nabla M_s\vert^2\nonumber \\
    &&\phantom{bbbbb}-2d\frac{8\theta^2MM_s}{(\theta M+M_s)^3}\nabla M\cdot\nabla M_s \nonumber\\&\geq 0
\end{eqnarray}
which, together with \eqref{eq:backcontr1PDE}, \eqref{def-V-for-SIT-u}, and \eqref{estdUdt-inter}, implies the existence of a constant $c>0$ such that
\begin{eqnarray}
    \frac{d U(t)}{dt} &\leq& -c\int_\Omega V(y) +\int_\Omega \alpha\frac{\theta M-M_s}{(\theta M + M_s)^2}\Big( -\theta\delta_sM(\theta M + 3M_s)+\delta_s{M}_s(3\theta M+M_s)\Big).
\end{eqnarray}
Note that
\begin{gather}
    -\theta\delta_sM(\theta M + 3M_s)+\delta_sM_s(3\theta M+M_s) = -\delta_s(\theta M-M_s)(\theta M+M_s).
\end{gather}
So
\begin{eqnarray}
    \frac{d U(t)}{dt} &\leq& -c\int_\Omega V(y) -\delta_s\alpha\int_\Omega \frac{(\theta M-M_s)^2}{(\theta M + M_s)}, \\ &\leq& -c_b U(t),
\end{eqnarray}
where
\begin{gather}
    c_b :=\min\{c,\delta_s\}.
\end{gather}
Hence
\begin{gather}
    \frac{d U}{dt} \leq -c_b U(t),
\end{gather}
which implies that
\begin{gather}
\label{Udecayexp}
    U(t) \leq  U(0)e^{-c_bt}\; \forall t \in [0,T],
\end{gather}
which, together with \eqref{U1andyinequality} gives \eqref{estimate-exponential} with
\begin{equation}
\mu=c_b \text{ and } C=\frac{k'_2}{k'_1}.
\label{defmuC}
\end{equation}

It remains to remove assumption \eqref{smooth-trajectories}. Note that this assumption is satisfied if
$(F^0,M^0,M_s^0)^T$ are smooth enough. For example this is the case if
\begin{gather}
\label{regularity-init}
F^0,\; M^0,\; M_s^0\text{ are in }\in L^\infty(\Omega).
\end{gather}
If \eqref{regularity-init} does not hold, we consider a sequence $(F^0_n,M^0_n,M_{sn}^0)^T_{n\in \mathbb{N}}$ such that
\begin{gather}
(F^0_n,M^0_n,M_{sn}^0)^T\in L^\infty(\Omega)^3\; \forall n \in \mathbb{N},
\\
F^0_n\geq 0,\; M^0_n\geq 0 \text{ and } M_{sn}^0\geq 0\; \forall n \in \mathbb{N},
\end{gather}
\begin{gather}
F^0_n\rightarrow F^0 \text{ in }L^1(\Omega), \;M^0_n\rightarrow M^0 \text{ in }L^1(\Omega), \text{ and } M^0_{sn}\rightarrow M^0_s \text{ in }L^1(\Omega) \text{ as } n\rightarrow +\infty.
\label{cvFnMnMsn}
\end{gather}
Let $y_n$ be the trajectory of the closed-loop system, i.e. \eqref{eq:Mosquito-sit-u} with the feedback law \eqref{eq:backcontr1PDE}, for the initial data
$(E^0,F^0_n,M^0_n,M_{sn}^0)^T$  and let $y$ be the trajectory of the closed-loop system for the initial data
$(E^0,F^0,M^0,M_{s}^0)^T$.
One has the following convergence result, whose proof is given in Appendix~\ref{existenceinL1} (see \eqref{FnF}, \eqref{MnM}, \eqref{MsnMs}, \eqref{EncvpointwiseE}, and \eqref{Endominated}).
\begin{gather}
\label{yn-conv}
\lim_{n\rightarrow +\infty} \norm{y_n-y}_{L^1((0,T)\times \Omega))}=0.
\end{gather}
Let, with a slight abuse of notation,
\begin{gather}
\label{defUpdenU}
U_n(t):=U(y_n(t)) \text{ and } U(t):=U(y(t)).
\end{gather}
 From \eqref{def-V-for-SIT-u}, \eqref{defUpde}, \eqref{yn-conv}, and \eqref{defUpdenU}, we have
\begin{gather}
\label{Un-conv}
\lim_{n\rightarrow +\infty} \norm{U_n-U}_{L^1(0,T)}=0.
\end{gather}
 Property \eqref{Udecayexp} for the trajectory $y_n$ is
\begin{gather}
\label{Udecayexp-n}
    U_n(t) \leq  U_n(0)e^{-c_bt}\; \forall t \in [0,T]\; \forall n\in \mathbb{N}.
\end{gather}
Letting $n\rightarrow +\infty$ in \eqref{Udecayexp-n} and using \eqref{cvFnMnMsn} together with \eqref{Un-conv}, relation \eqref{Udecayexp-n} also holds for the $y$ trajectory. Hence, again, \eqref{estimate-exponential} holds with $C$ and $\mu$  defined in \eqref{defmuC}. This concludes the proof of Theorem~\ref{see:stabiliyThm}.
 \end{proof}
\section{Numerical simulation in 2D}\label{see:NumericalResults}

This section presents some numerical simulations of system \eqref{eq:Mosquito-sit-u} to illustrate our analytical results. Since we consider a two-dimensional model, the full discretization is achieved using a second-order finite difference method for spatial discretization and a first-order standard finite difference method for temporal discretization. The time step follows a CFL condition to ensure the positivity of the solution.

We consider the domain $ \Omega = [0, \ell] \times [0, \ell] $ (where $\ell =5$ km) with a heterogeneous environmental capacity $ K : \Omega \longrightarrow (0,+\infty) $. (Note that such a $\Omega$ is not smooth; however it is a plane convex polygon, which is sufficient to perform our proofs as it can be seen from the study of the elliptic case, which is for instance done in \cite{1985-Grisvard-book}.)

The parameters we use are presented in the following table.
\newcolumntype{C}[1]{>{\centering\arraybackslash}m{#1}}
\begin{table}[H]
    \centering
		\setlength{\tabcolsep}{0.1cm}
		\begin{tabular}{C{1cm} C{6cm} C{3cm} C{2.3cm} C{2cm}}
			\toprule
			  &  \textbf{Parameter name} & \textbf{Typical interval} & \textbf{Value in our work}\footnotemark & \textbf{Unit} \\
			\midrule
			$\beta_E$ & Effective fecundity &[7.46, 14.85]& 8& Day$^{-1}$\\
			$\nu_E$ & Hatching parameter &[0.005, 0.25]& 0.05&Day$^{-1}$\\
			$\delta_E$& Aquatic
			phase death rate & [0.023, 0.046]&0.03&Day$^{-1}$\\
			$\delta_F$& Female death rate &[0.033, 0.046]& 0.04&Day$^{-1}$\\
			$\delta_M$ & Males death rate &[0.077, 0.139]&  0.1&Day$^{-1}$\\
			$\delta_s$ & Sterilized male death rate& -& 0.12&Day$^{-1}$\\
			$\nu$ & Probability of emergence& -& 0.49&\\
   $\eta$ & Search efficiency parameter &- &$0.7$& km$^{2}$ \\

			\bottomrule
		\end{tabular}
		\caption{Value for the parameters of system \eqref{eq:MosquitoLifemodelpde} (see \cite{anguelov2012mathematical}\cite{strugarek2019use}).}
	\label{eq:tableparametrepde}
\end{table}
We always take the diffusion coefficients for wild females and males as $d_1 = d_2 = 0.1$. We choose $\gamma = 1$.  For the space varying carrying capacity, we choose to consider the simple form
\begin{gather}\label{eq:functK}
    K(x,y) := \zeta+ \Lambda_1e^{-\frac{(x-\mu_1)^2+ (y-\xi_1)^2}{\sigma_1}}+ \Lambda_2e^{-\frac{(x-\mu_2)^2+ (y-\xi_2)^2}{\sigma_2}}+\Lambda_3e^{-\frac{(x-\mu_3)^2+ (y-\xi_3)^2}{\sigma_3}}.
\end{gather}
The constant $\zeta$ accounts for the unknown egg-laying sites throughout the area. The three Gaussian functions represent the well-known egg-laying sites (water puddles, ponds, etc.). The parameters $\Lambda_i, \sigma_i, \mu_i, \xi_i$ for $i = 1,\cdots, 3$ are used to qualify the position and size of the egg-laying sites.
\begin{figure}[H]
				\centering
			\includegraphics[width=\textwidth]{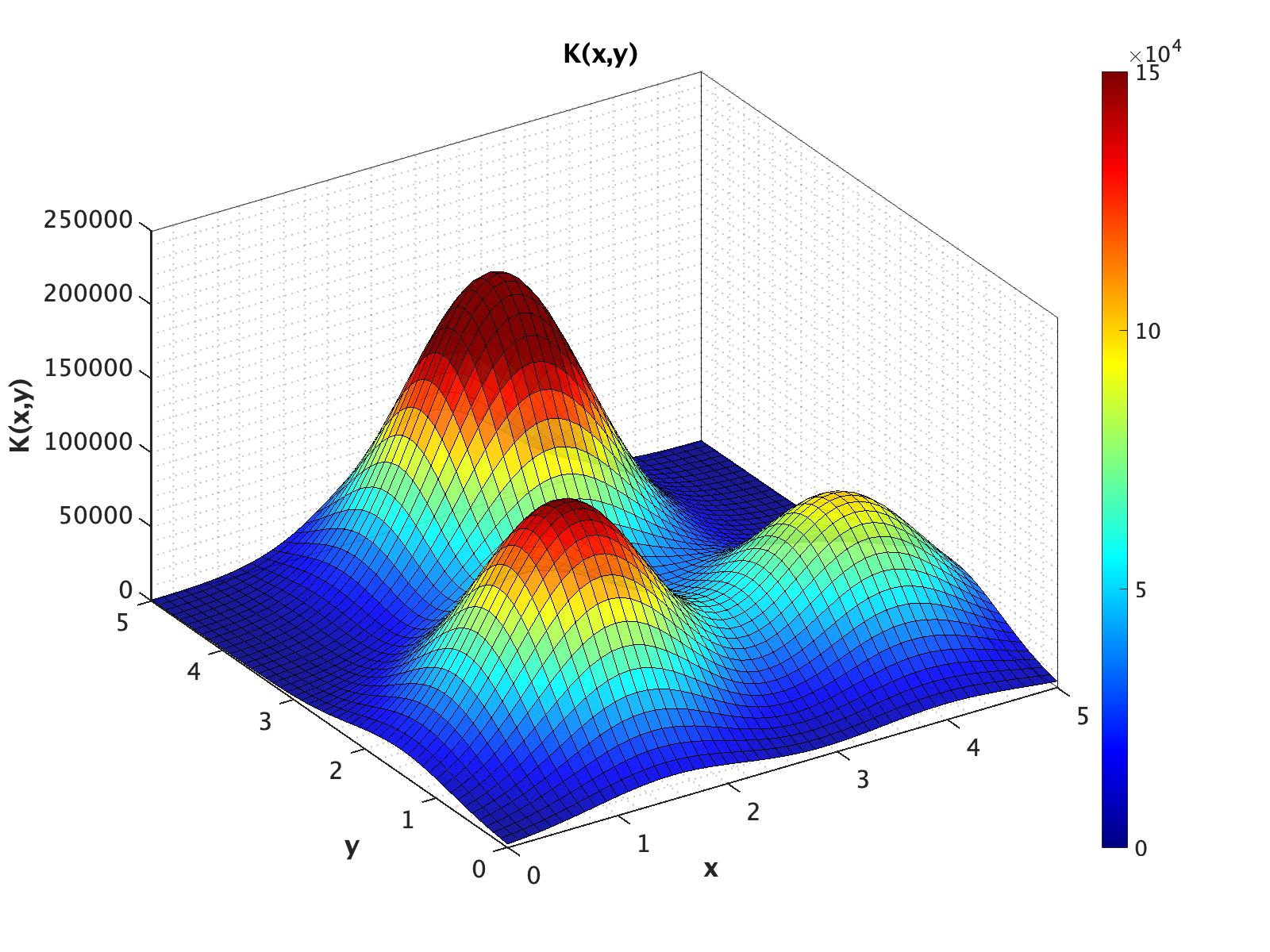}
     \caption{Numerical simulation of the function $K$ for   $\zeta = 500$,  $\Lambda_1 = 2\times 10^5$, $\Lambda_2 = 1.5\times 10^5$, $\Lambda_3 = 1\times 10^5$. The domain $\Omega = [0,\ell] \times[0,\ell]$ and $\ell= 5$ km which we discretize with $N_x = 50=N_y$ and $dx=dy=0.1$. $\sigma_1=\sigma_2=\sigma_3 = 1$,  $\mu_1 = 2.5$, $\mu_2 = 1.5=\xi_2=\xi_3$,  $\mu_3 = 4=\xi_1$.}
				\label{fig:FunctionK}
		\end{figure}

  Using the values chosen in Table~\ref{fig:FunctionK}, the $L^1$-norm of the carrying capacity $K$ is
\begin{align}
     \norm{K}_{L^1} = \int_\Omega K(x,y) dxdy = 1.33\times 10^6.
\end{align}
The initial condition is the steady state of the system without diffusion:
\begin{gather}
    E^0(x,y)=(1-\frac{1}{R})K(x,y),\\
    F^0(x,y) = \frac{\nu\nu_E}{\delta_F}E^0(x,y),\\
    M^0(x,y) = \frac{(1-\nu)\nu_E}{\delta_M}E^0(x,y).
\end{gather}
Although this simulation does not intend to be a reproduction of real field data, for the quantities of mosquitoes to have the right order of magnitude, we chose the average male density to be the same as the one reported in \cite{gato2021sterile}, i.e. we take 
$(\norm{E^0}_{L^1}, \norm{M^0}_{L^1},\norm{F^0}_{L^1}) = ( 1.30\times 10^6, 3.3\times 10^5, 8.03\times 10^5 )$.

We apply the control  $u$ given by relation \eqref{eq:backcontr1PDE} on the whole domain $\Omega$.
We assume that using adult mosquito traps we can have  an estimate of the states and use it to compute  the feedback law.  The following figure presents the result of our simulation. We set  $\theta = 75$ and $\alpha = 0.25$.

	\begin{figure}[H]
			\centering
			\includegraphics[width=0.5\textwidth]{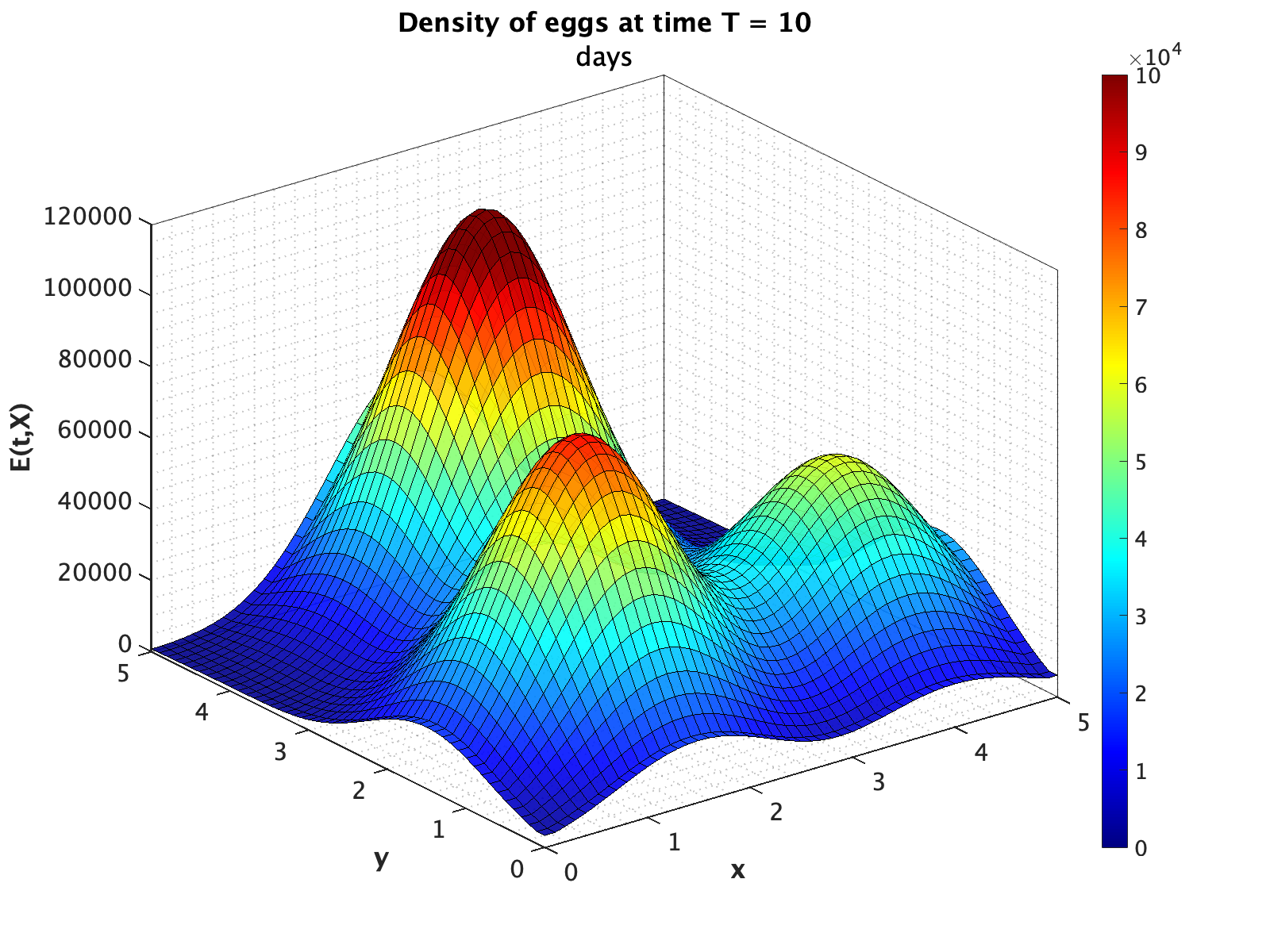}
			\caption{Fecundated egg density at the time $t=10$ days  when applying the backstepping feedback law  \eqref{eq:backcontr1PDE}. }
		\end{figure}

         \begin{figure}[H]
			\centering
			\includegraphics[width=0.8\textwidth]{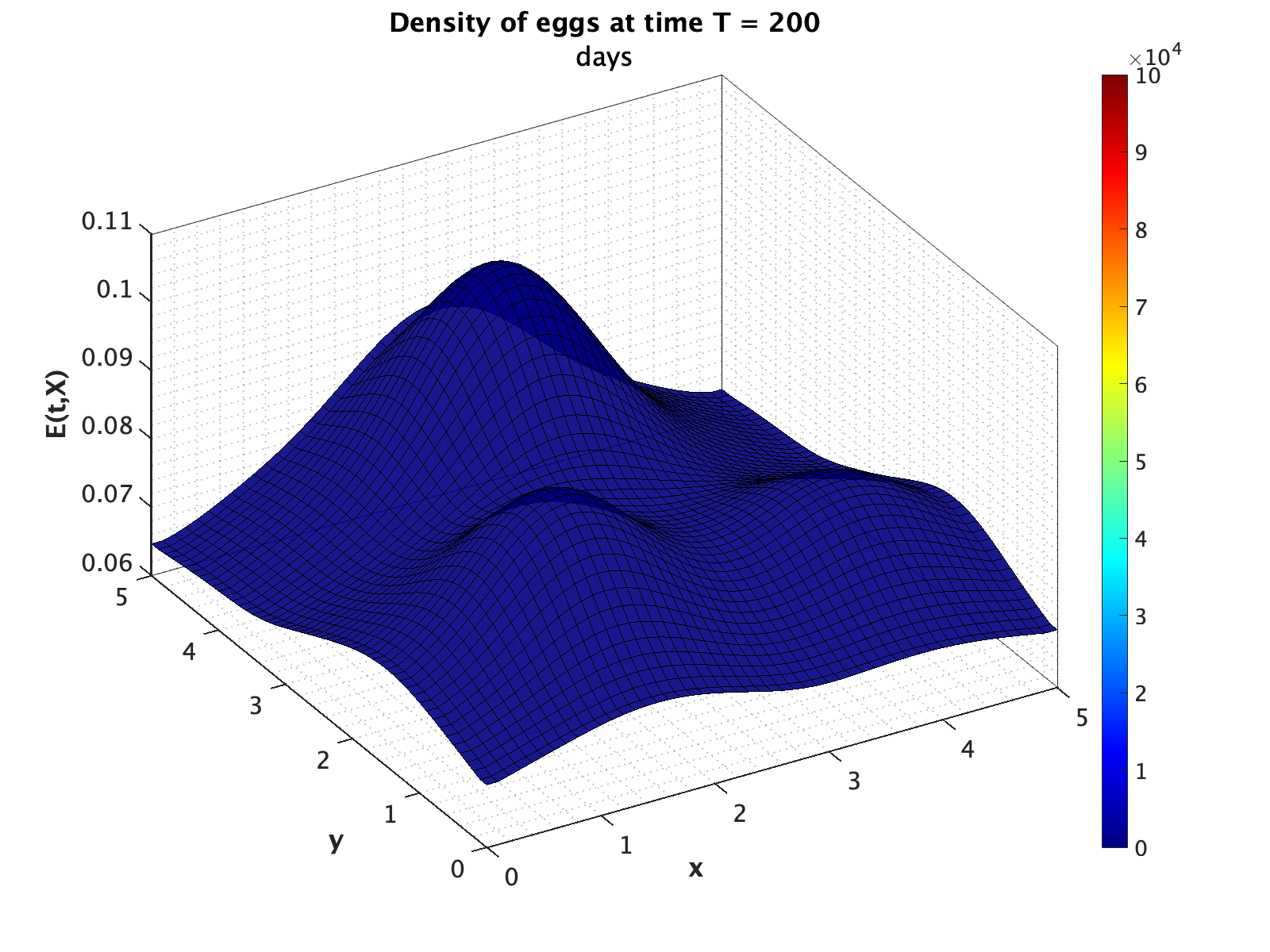}
			\caption{Fecundated egg density at the time $t=200$ days  when applying the  backstepping feedback law  \eqref{eq:backcontr1PDE}. We remark that it is very close to zero everywhere in the domain as also shown in Figure \ref{fig:L1normE}  } \label{fig:GlobalApply}
		\end{figure}
	
	 \begin{figure}[H]
				\centering
		\includegraphics[width=\textwidth]{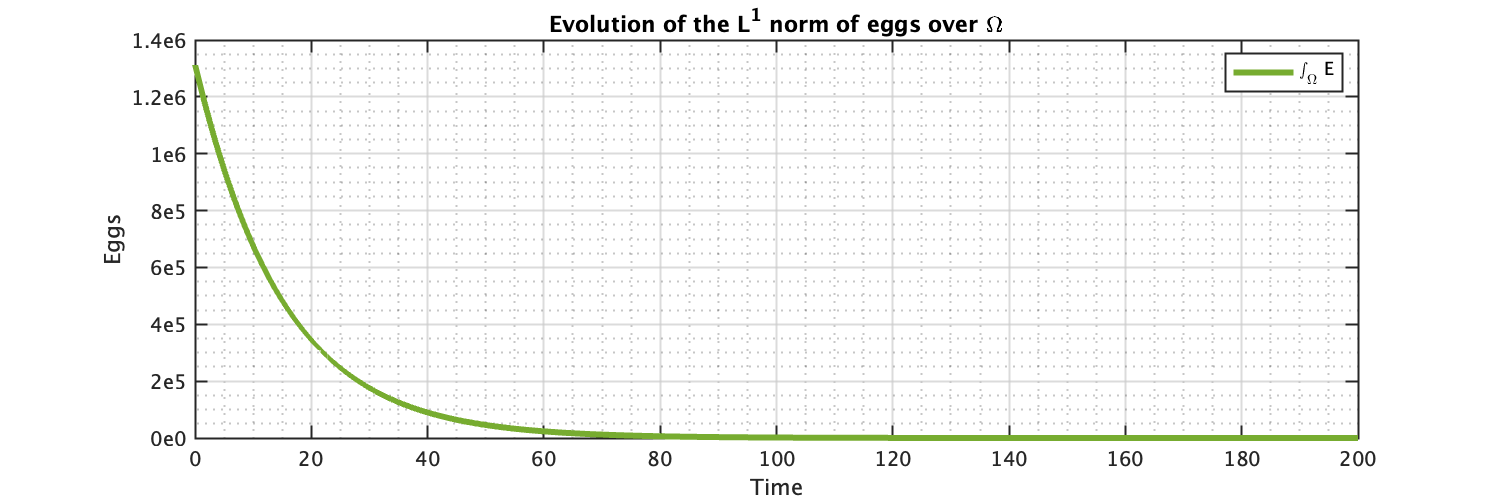}
				\caption{Evolution of $t \mapsto \int_\Omega E(t)$, representing the total number of fertile eggs across the entire domain.}
		\end{figure}
		\label{fig:L1normE}
  \begin{figure}[H]
				\centering
		\includegraphics[width=\textwidth]{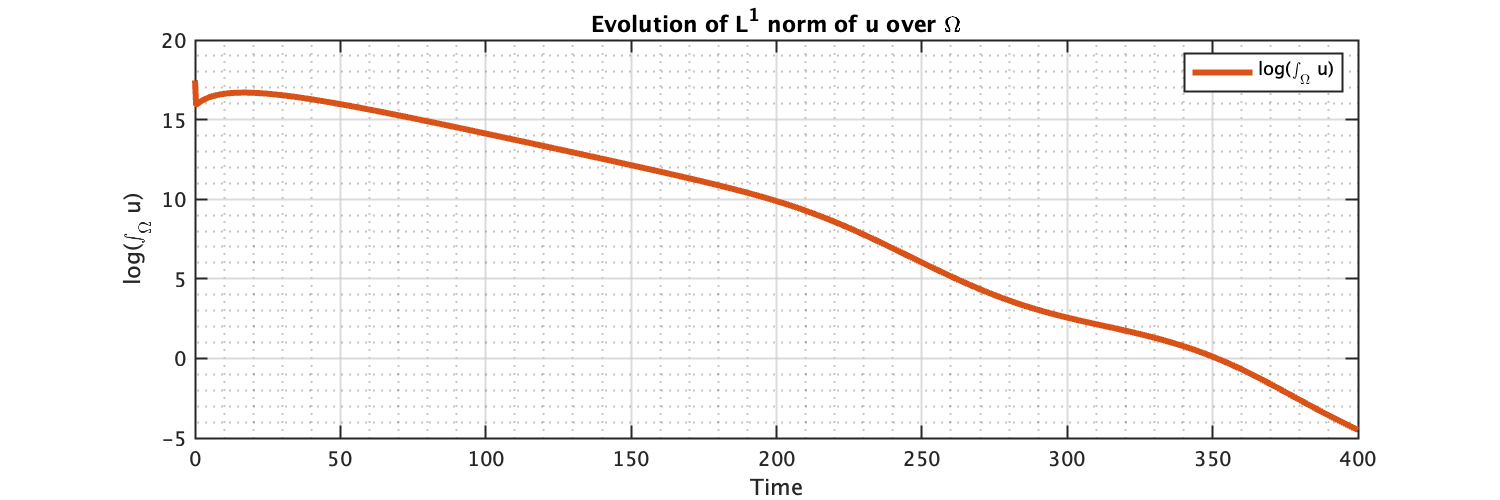}
				\caption{Evolution of $t \mapsto \ln\left(\int_\Omega u(t)\right)$, representing the total number of sterile mosquitoes released across the entire domain.}
    \label{fig:intu}
		\end{figure}

   Figure ~\ref{fig:GlobalApply} shows that after time $t = 200$ days, the population density across the entire domain approaches zero.
For the control law \eqref{eq:backcontr1PDE}, we proved (Theorem~\ref{see:stabiliyThm}) the asymptotic stability of the population when the diffusion coefficients are equal for male and released sterile male mosquitoes. However, the numerical simulations (see Table~\ref{tab:GlobalApplication}) seem to show that this assumption should not be necessary. In particular, the global asymptotic stability seems to hold when the diffusion coefficient of sterile mosquitoes is set to $d_3 = 0$ and $d_2 \neq 0$.

Figure~\ref{fig:intu} displays the evolution of the $L^1$-norm of the control \eqref{eq:backcontr1PDE} over time. In Table~\ref{tab:GlobalApplication}, the control cost is given by the expression

\begin{equation}
    \int_0^T \int_\Omega u(t, x) \; dx \, dt.
\end{equation}

We define the convergence time as the first time $t > 0$ such that

\begin{gather}\label{stoppingCond}
    \max_{i,j} E(i, j, t) \leq 1.
\end{gather}

\begin{table}[H]
    \centering
    \begin{tabular}{|c|c|c|c|c|}
		\hline
		diffusion coefficient  & Convergence time   & Control cost & Regulation parameter\\
		\hline
		$d_3= 0.05$ & T = 357 days & $1.68 \times 10^8$ & $\theta = 75, \alpha = 0.25$\\
		\hline
  $d_3= 0.05$ & T = 167 days & $9.4 \times 10^8$  & $\theta = 75, \alpha = 0.025$\\
		\hline
  $d_3= 0$ & T = 355 days &$1.62\times 10^8$& $\theta = 75, \alpha = 0.25$\\
		\hline
	\end{tabular}
    \caption{Numerical results obtained for the case where the state feedback law is applied  in all $\Omega$.}
    \label{tab:GlobalApplication}
\end{table}
In our proof  of Theorem\ref{see:stabiliyThm},  the control is applied everywhere in $\Omega$. However, it is also interesting to consider for practical issues, the case where the control is applied only is subset $\omega\subset\Omega$. Knowing that there is a permanent exchange of population between the controlled area $\omega$ and the uncontrolled area $\Omega \setminus \omega$ due to the natural spread of mosquitoes, is it possible to stabilize the population in the controlled area at zero? Moreover, by acting only on the subset $\omega$, is it possible to reduce the overall population to zero across the entire domain $\Omega$? We consider the square centered at $A=(2.5, 2.5)$, 
$\omega = \{X=(x, y) \in \mathbb{R}^2 : \norm{X-A}_\infty \leq \varrho\}$ where $\varrho=1$. We apply the feedback control  law \eqref{eq:backcontr1PDE} for $\theta =75$ and $\alpha = 0.0025$ only in the domain $\omega$.  The result is shown in  Figures~\ref{fig:localapplyfig} and ~\ref{fig:localeggs0.001}.

\begin{figure}[H]
	\centering
	\begin{subfigure}[b]{0.45\textwidth}
		\centering
		\includegraphics[width=\textwidth]{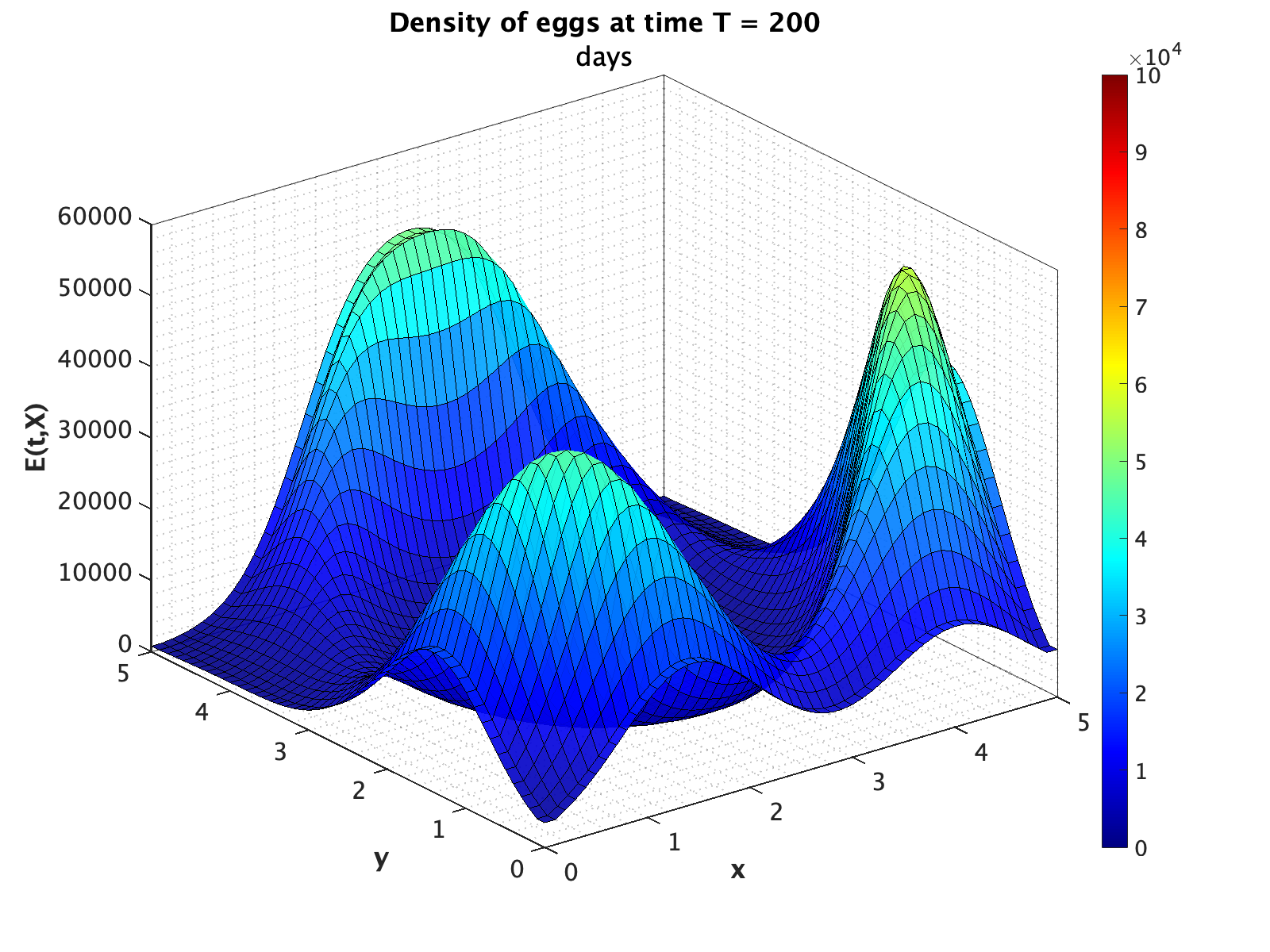}
		\caption{Evolution of the egg density in $\Omega$ for $d_3 = 0.01$ at time $T=200$ days when applying the feedback law \eqref{eq:backcontr1PDE} for $\theta = 75$ and $\alpha=0.001$.}
	\end{subfigure}
	\hfill
	\begin{subfigure}[b]{0.45\textwidth}
		\centering
		\includegraphics[width=\textwidth]{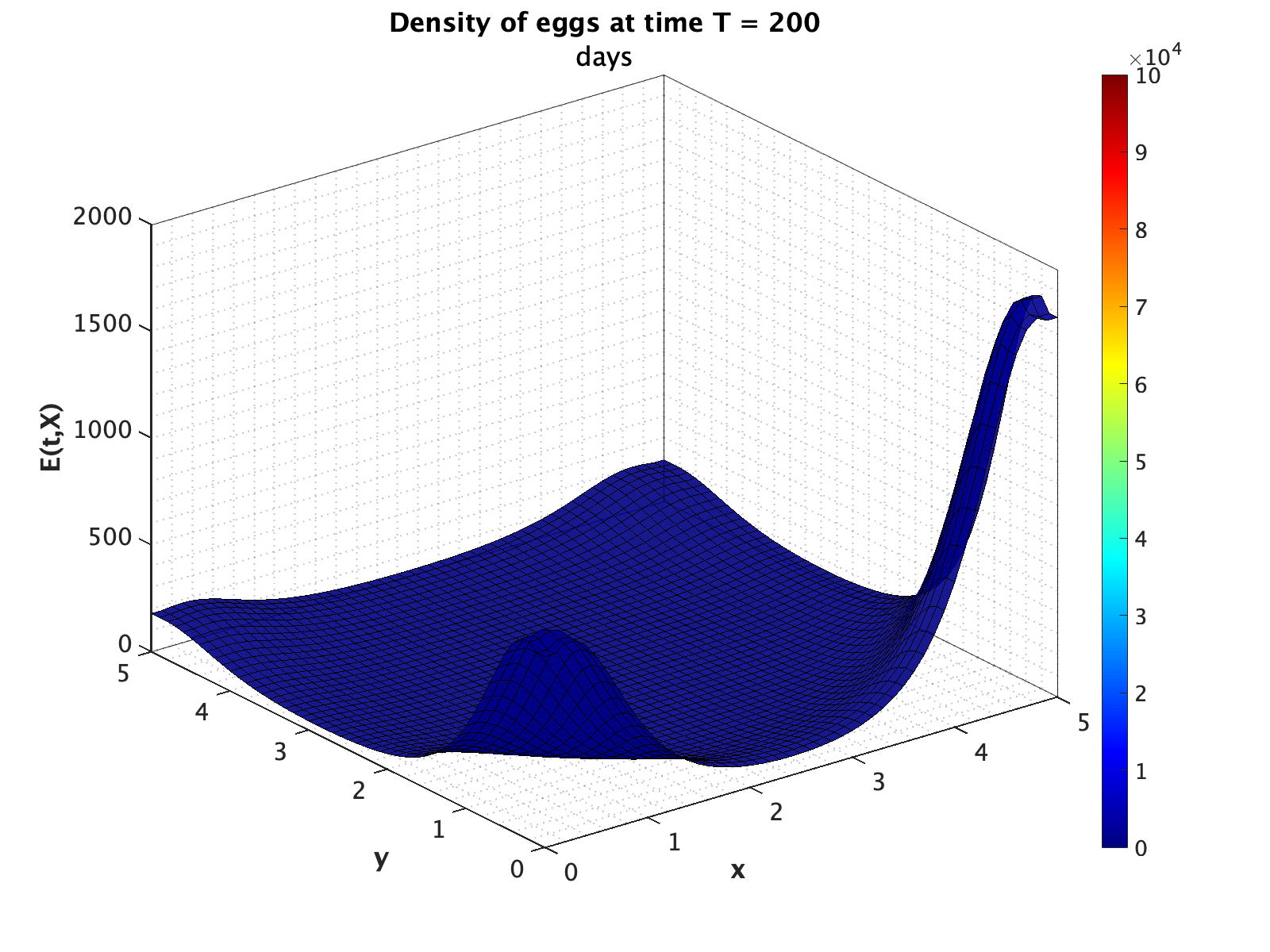}
		\caption{Evolution of the egg density in $\Omega$ for $d_3 = 0.02$ at time $T=200$ days when applying the feedback law \eqref{eq:backcontr1PDE} for $\theta = 75$ and $\alpha=0.001$.}
	\end{subfigure}
    \hfill
    \begin{subfigure}[b]{0.45\textwidth}
		\centering
		\includegraphics[width=\textwidth]{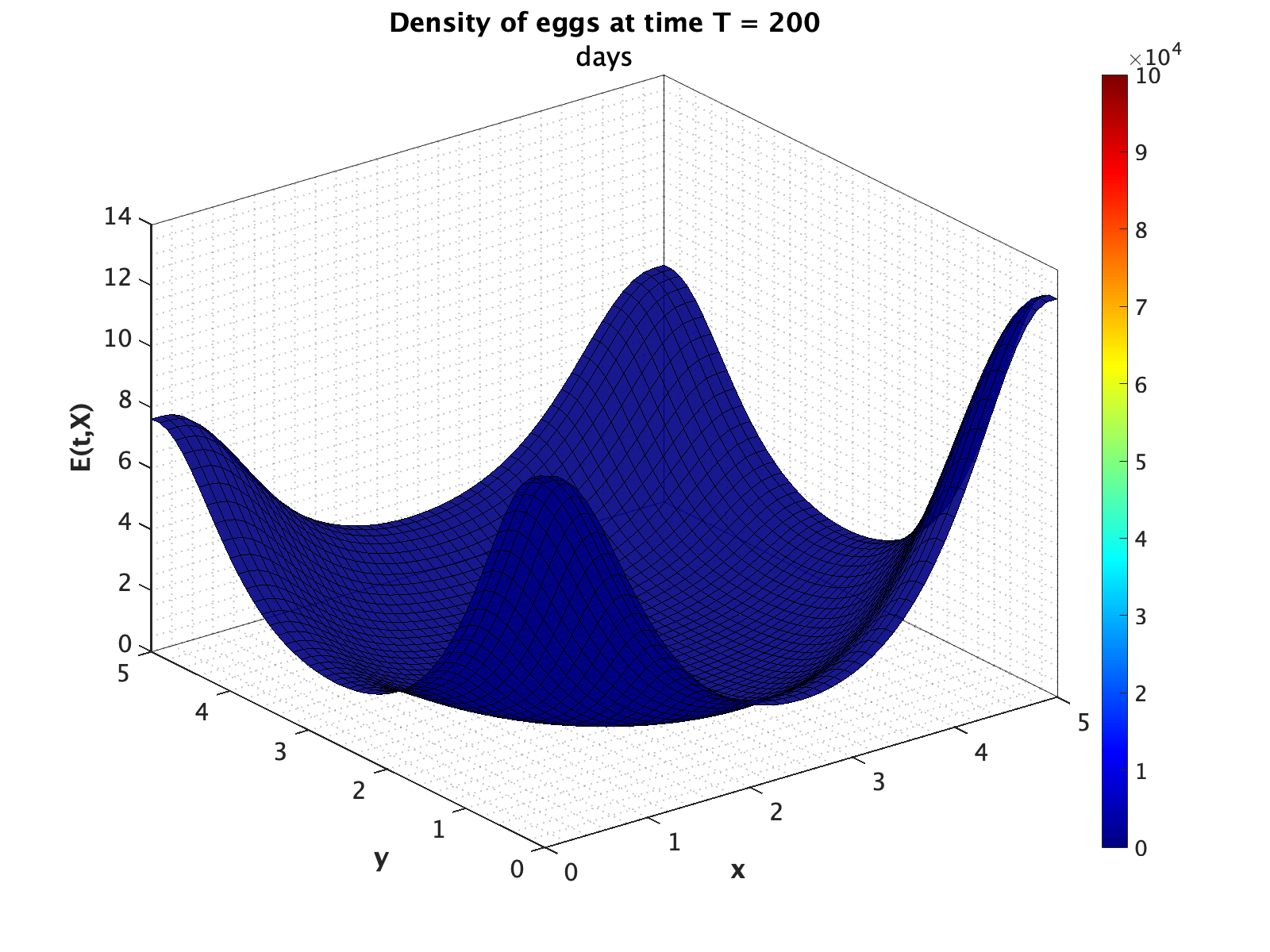}
		\caption{Evolution of the  egg density in $\Omega$ for $d_3 = 0.03$ at time $T= 200$ days  when applying the feedback law \eqref{eq:backcontr1PDE} for $\theta = 75$ and $\alpha=0.001$.}
	\end{subfigure}
	\caption{}
	\label{fig:localapplyfig}
\end{figure}

\begin{figure}[H]
		\centering
		\includegraphics[width=0.5\textwidth]{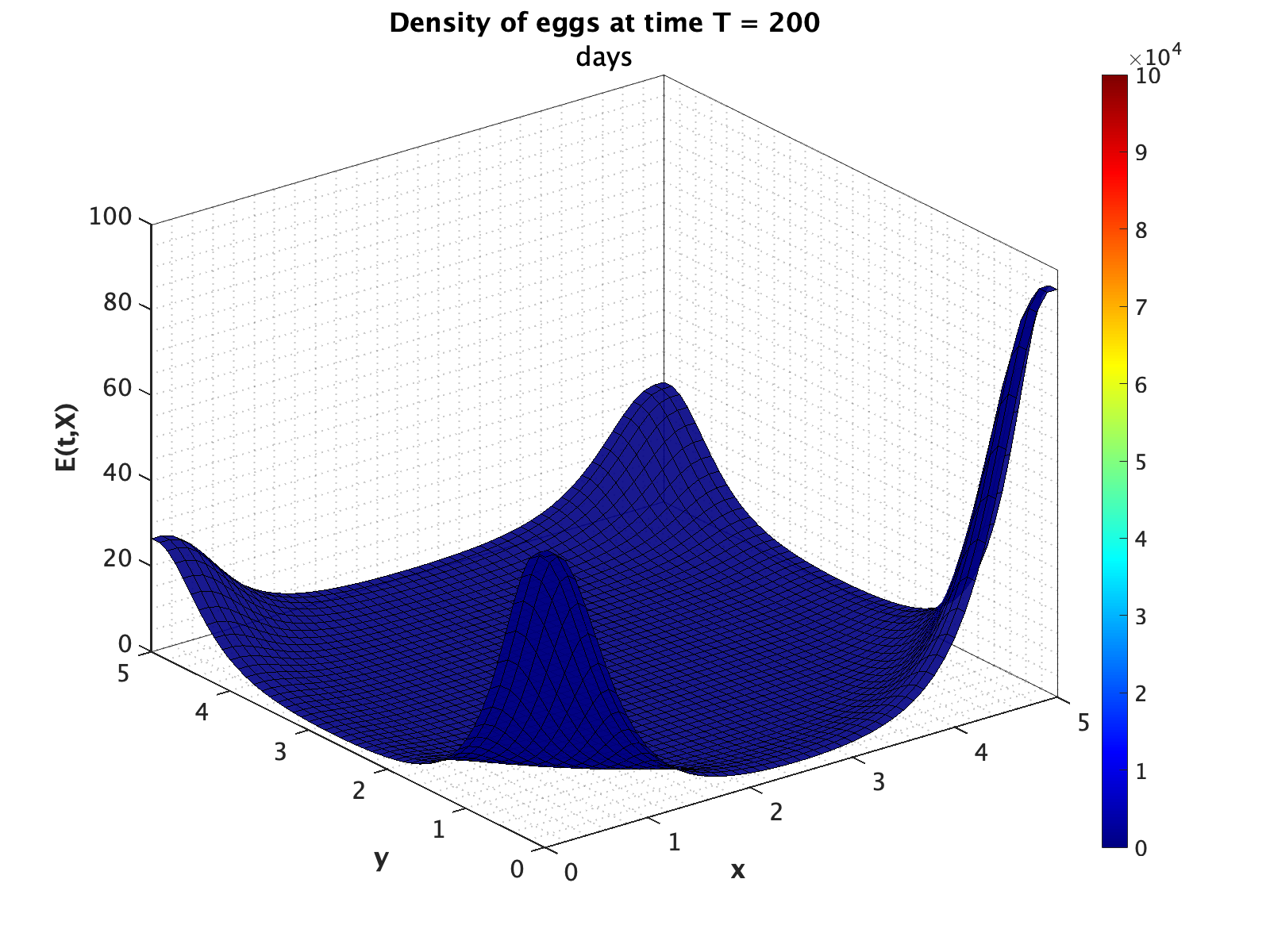}
		\caption{Evolution of the egg density in $\Omega$ for $d_3 = 0.01$ at time $T=200$ days when applying the feedback law \eqref{eq:backcontr1PDE} for $\theta = 75$ and $\alpha=0.0001$.}
		\label{fig:localeggs0.001}
	\end{figure}
Figure~\ref{fig:localapplyfig} shows that for the same application time $T=200$ days and the same quantity of sterile males released (feedback control function \eqref{eq:backcontr1PDE} for $\alpha=0.001$ and $\theta = 75$), the density of eggs converges to zero in the subset $\omega$ and in the entire domain $\Omega$, faster when  the diffusion coefficient increases . In Figure~\ref{fig:localeggs0.001}, compared to Figure~\ref{fig:localapplyfig} (a), when we increase the release by choosing $\alpha=0.0001$ and $\theta=75$, keeping the other coefficients, the population in the whole domain $\Omega$ converges to zero. These figures  suggest that the quality of the sterile males, particularly their mobility (which may be reduced by the sterilization and the release processes), can change the rate of convergence in the full domain.

\section{Conclusion and perspectives}

Very few studies have been done on the spatiotemporal SIT model in more than one-dimensional domains.  We first study the ODE case to retrieve for the new model a result concerning the critical points as we had obtained in \cite{bidi_global_2025}. It should also be possible to extend the feedback control results of \cite{bidi_global_2025,bidi2024feedback} to this new setting (the proofs should even be simpler since we no longer have a singularity at the origin).
Next, in section ~\ref{Mosquitoes-RD}, we consider a mosquito population mathematical model with diffusion on a domain $\Omega$, which is a non-empty regular open subset of $\RR^2$. We have shown the existence and uniqueness of the solution in weak spaces. In particular two components of the state, namely $F$ and $M$, are assumed to be only $L^1$. We favored this Banach space because the $L^1$-norm is a natural way to represent the total population. We also prove the global asymptotic stability of $\textbf{0}$ in $L^1$ if $R$, defined in \eqref{defR}, is less than 1.

In Section ~\ref{see: SIT-RD}, we study the reaction-diffusion SIT model by showing its well-posedness in weak spaces.
 In particular, two components of the state, namely $M$ and $M_s$, are assumed to be only in $L^1$. As our objective is to stabilize the mosquito population at a low level across the entire domain, we began by constructing a feedback law, \eqref{eq:backcontr1PDE}, which stabilizes this model throughout $\Omega$. This state feedback law \eqref{eq:backcontr1PDE} is applied across the whole domain $\Omega$. We successfully proved this theoretical result under the condition that the diffusion coefficients are identical for male mosquitoes and released sterile mosquitoes (see Theorem ~\ref{see:stabiliyThm}).

Moreover, in Section ~\ref{see:NumericalResults}, the numerical simulations reveal that stability at the origin is maintained regardless of the diffusion coefficients of sterile mosquitoes when the state feedback law is applied across the entire domain $\Omega$. Nonetheless, the diffusion coefficients should play a more significant role when the control is only implemented in a subdomain of $\Omega$. 
It would be interesting to prove these numerical results mathematically.

We also note that it would be valuable to investigate the case where the diffusion coefficients are not constant in space nor time. Our results can  also be extended to higher dimensions. However, even in three dimensions, one would expect that mosquito diffusion differs between horizontal and vertical directions,  making it necessary to consider  anisotropic diffusion operators.

The availability of measurements required for the implementation of the feedback laws is also a very important aspect that we did not develop in this work.
We have addressed it in previous works, where we proposed several approaches, including 
the design of observers to estimate the most difficult-to-access data based on easily measurable quantities (see  \cite{bidi2024feedback}) and
the development of control laws that rely solely on variables that are easier to measure \cite{2024-Agbo-Bidi-et-al,bidi_global_2025}. However, all these articles deal with ODE models. The case of PDE models has never been considered and remains a challenging open problem which should be addressed in future works, in particular those dealing with field data. In fact, real measurements will not only not contain all the population variables, as mentioned before, but also be done in a finite number of places and not at every point in the domain. The same will also probably be the case for the releases, which will not be done in all the points of the domain (this is surely the case for ground releases although it can be less so when drones are used to release sterile males in a nearly continuous manner). Localized measurement and localized control are thus interesting problems to study in future works.

\section*{Acknowledgements}
The authors are grateful to Alain Haraux, Thierry Cazenave, Michel Pierre and Michel Souplet for useful information on $L^1$ solutions for parabolic systems.

This project has received financial support from the PEPR Maths-Vives, project Maths-ArboV, under grant agreement ANR-24-EXMA-0004.
\appendix
\section{Stability properties of the ODE dynamical systems}\label{see:AppendixODE}

\subsection{Proof of Theorem~\ref{Steadystates}}

By setting the right-hand side of \eqref{eq:S11E1pde}-\eqref{eq:S11E2pde}-\eqref{eq:S11E3pde} to zero, we obtain the extinction equilibrium $\textbf{0} = (0, 0, 0)^T$ and the non-trivial equilibrium $X_{E}$ (see \eqref{defXE}), where $E\in (0,+\infty) $ satisfies:
\begin{gather} \label{Estar}
    \frac{\eta\beta_E\nu(1-\nu)\nu_E^2}{\delta_M\delta_FK}E^2 -\frac{(1-\nu)\nu_E(\nu_E+\delta_E)\eta}{\delta_M}(\frac{\beta_E\nu\nu_E}{\delta_F(\nu_E+\delta_E)}-1)E + (\nu_E+\delta_E)=0.
\end{gather}
Using \eqref{defR}, we can rewrite \eqref{Estar} as
\begin{gather}\label{stationaryEequation}
    \frac{\eta R(1-\nu)\nu_E}{\delta_M K}E^2 -\frac{(1-\nu)\nu_E\eta}{\delta_M}(R-1)E + 1=0.
\end{gather}
The discriminant of the quadratic polynomial in $E$ of the left hand side of  \eqref{stationaryEequation} is:
\begin{gather}\label{eq:DeltaEquation}
    \Delta = \frac{\eta (1-\nu)\nu_E }{\delta_M}\Big(\frac{\eta (1-\nu)\nu_E}{\delta_M} R^2 - 2 (\frac{\eta (1-\nu)\nu_E}{\delta_M}+\frac{2}{K})R + \frac{\eta (1-\nu)\nu_E}{\delta_M}\Big).
\end{gather}
Let us examine the sign of $\Delta$, which is a quadratic polynomial in $R$. The discriminant of this polynomial is:
 $$\Delta' = \frac{4}{K} \frac{\eta (1-\nu)\nu_E }{\delta_M}\Big(\frac{(1-\nu)\nu_E\eta}{\delta_M}+\frac{1}{K}\Big)>0.$$
Thus, the roots of $\Delta$ are real and they are ${\rgrand}$ defined in \eqref{defrgrand} and ${\rpetit}$ defined in \eqref{defrpetit}. Note that $0<\rgrand$, $\rpetit < \rgrand$, and, since $\rpetit\rgrand=1$, one has
\begin{gather}
\label{orderr1r2}
0<\rpetit<1<\rgrand.
\end{gather}

Let us first study the case $R \in (0, {\rpetit}) \cup ({\rgrand}, +\infty)$. Then there are two solutions of \eqref{stationaryEequation}, which are $E_1$ defined in \eqref{defE1} and $E_2$ defined in \eqref{defE1}.
Since
\begin{gather}
E_1E_2=\frac{\delta_M K}{\eta R(1-\nu)\nu_E} \text{ and }E_1+E_2 =\frac{K(R-1)}{R},
\end{gather}
one gets, using also \eqref{orderr1r2}, that $E_1<0$ and $E_2<0$ if $R \in (0, {\rpetit})$ (and so these solutions are not relevant), while $E_1>0$ and $E_2>0$ if $R\in ({\rgrand}, +\infty)$.

Let us now consider the case $R \in \{ {\rpetit},{\rgrand} \}$. Then \eqref{stationaryEequation} has exactly one solution $E_0$ defined in \eqref{defE0}. This solution is in $(0,+\infty)$ if and only if $R>1$. Hence, using once more \eqref{orderr1r2}, this solution is in $(0,+\infty)$ if and only if $R={\rgrand}$.

 Finally, let us deal with the case $R\in ({\rpetit},{\rgrand})$. Then $\Delta<0$ and \eqref{stationaryEequation} has no real solutions.

This concludes the proof of Theorem~\ref{Steadystates}. \hfill \qedsymbol

\subsection{Proof of Theorem~\ref{stablityofstates} }
\label{sec:proof-stability-states}
We define $\xi := (E,F,M)^T$ and $[0,+\infty)^3 := \{\xi = (E,F,M)^T \in \RR^3 :\; E \geq 0,\; F \geq 0,\; M \geq 0\}$. The model \eqref{eq:S11E1pde}-\eqref{eq:S11E2pde}-\eqref{eq:S11E3pde} can be expressed in the form
\begin{align}
    \dot{\xi} = f(\xi), \label{eq:originepde}
\end{align}
where $f: [0,+\infty)^3 \to \RR^3$ represents the right-hand side of \eqref{eq:S11E1pde}-\eqref{eq:S11E2pde}-\eqref{eq:S11E3pde}:
\begin{gather}
f(\xi)=
\begin{pmatrix}
f_1(\xi)
\\
f_2(\xi)
\\
f_3(\xi)
\end{pmatrix}
=
\begin{pmatrix}
\beta_E F \left(1-\frac{E}{K}\right)\frac{\eta M}{1+\eta M}
 - \big( \nu_E + \delta_E \big) E
 \\
\nu\nu_E E -  \delta_F F
\\
(1-\nu)\nu_E E - \delta_M M
\end{pmatrix}
.
\label{deffxi}
\end{gather}
The function $f$ is of class $\mathcal{C}^1$ on $[0,+\infty)^3$. Note that if $\dot{\xi} = f(\xi)$ and $\xi(0) \in [0,+\infty)^3 $, then, for every $t \geq 0$, $\xi(t)$ exists and belongs to $[0,+\infty)^3 $.

Let us first prove \ref{0LASforallR}. The Jacobian matrix of $f$ evaluated at the extinction equilibrium, is given by:
\begin{align}
    J(\textbf{0}) = \begin{pmatrix}
        -(\nu_E+\delta_E) & 0 & 0 \\
        \nu\nu_E & -\delta_F & 0 \\
        (1-\nu)\nu_E & 0 & -\delta_M
    \end{pmatrix}.
\label{eq:jacobianpde}
\end{align}

The eigenvalues of $J(\textbf{0})$ are $-(\nu_E+\delta_E)$, $-\delta_F$, and $-\delta_M$. All these eigenvalues are real and negative, which implies that the extinction equilibrium $\textbf{0}$ is locally asymptotically stable for  system~\eqref{eq:originepde}.

Let us now prove \ref{0stabRpetit}. We assume that $R\in (0,\rgrand)$ and then, by Theorem~\ref{Steadystates},
\begin{gather}
(f(\xi)=\textbf{0}) \Leftrightarrow (\xi = \textbf{0}).
\label{0seul0}
\end{gather}
Let us define
\begin{gather}
B:=\left\{\xi\in[0,+\infty)^3:\; E\leq K\right\}.
\end{gather}
Since
\begin{gather}
(E=K)\Rightarrow (f_1(\xi)<0)
\end{gather}
the set $B$ is positively invariant for $\dot \xi=f(\xi)$. Moreover, since
\begin{gather}
(\xi \not \in B)\Rightarrow (f_1(\xi)\leq -\delta_E E),
\label{f1<0horsB}
\end{gather}
for every $\xi^0 \in [0,+\infty)^3$ there exists $t\geq 0$ such that $\xi(t)\in B$, where $\xi :[0,+\infty)\rightarrow [0,+\infty)^3$ is the solution of \eqref{eq:originepde} satisfying $\xi(0)=\xi^0$. So, in order to prove \ref{0stabRpetit}, it suffices to check that
\begin{gather}
\text{$\textbf{0}$  is globally  asymptotically stable on $B$ for \eqref{eq:originepde}. }
\label{0GASonB}
\end{gather}
To prove \eqref{0GASonB}, we first point out that system \eqref{eq:originepde} is cooperative on $B$, i.e. $f$ satisfies:
\begin{gather}
 \frac{\partial f_i}{\partial x_j} (\xi) \geq 0\; \forall \xi \in B,\;    \forall i \in \{1,2,3\}, \;
 \forall j \in \{1,2,3\} \text{ such that } i\neq j.
\label{fcooperative}
\end{gather}
We recall the following theorem \cite[Theorem 6]{anguelov2012mathematical}, which holds for every function $f$ of class $\mathcal{C}^1$ on $B$ such that $\dot \xi =f(\xi)$ is cooperative on $B$.

\begin{theorem}\label{see:monotoneLemma}
     Let $a$ and $b$ be in $B$ and such that $a _i\leq  b_i$ for every $i\in \{1,2,3\}$. Assume that $f_i(b) \leq 0 \leq f_i(a)$ for every $i\in \{1,2,3\}$. Then $[a,b] := \{\xi \in [0,+\infty)^3 : \;a_i\leq \xi_i \leq b_i\; \forall i \in\{1,2,3\}\} \subset B$ is positively invariant for $\dot \xi=f(\xi)$.  Moreover, if $[a,b]$ contains a unique equilibrium $p$ for $\dot \xi =f(\xi)$, then $p$ is globally asymptotically stable on $[a,b]$ for $\dot \xi =f(\xi)$.
\end{theorem}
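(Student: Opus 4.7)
The theorem is a classical statement on cooperative (monotone) dynamical systems, and I would prove it in two parts: first the positive invariance of $[a,b]$, then the global asymptotic stability of the unique equilibrium $p$. Both parts exploit the cooperative property \eqref{fcooperative}, which has already been verified for $f$ on $B$.

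For positive invariance, the plan is to use a Nagumo-type boundary argument. Suppose a trajectory $\xi(\cdot)$ of \eqref{eq:originepde} with $\xi(0)\in[a,b]$ were to leave $[a,b]$. Then there would be a first exit time $t^{*}$ and an index $i$ such that, say, $\xi_i(t^{*})=b_i$ while $a_j\leq \xi_j(t^{*})\leq b_j$ for $j\neq i$. Since $f_i$ is non-decreasing in $\xi_j$ for $j\neq i$ by \eqref{fcooperative}, one obtains
\[
f_i(\xi(t^{*}))\leq f_i(b)\leq 0,
\]
which contradicts the trajectory exiting upward through the face $\{\xi_i=b_i\}$. A symmetric inequality, using $f_i(a)\geq 0$, rules out exit through the lower faces. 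To make the ``first exit time'' argument rigorous at a mere touching point, I would enlarge the box to $[a-\varepsilon\mathbf{1},b+\varepsilon\mathbf{1}]$, prove strict invariance of the enlarged box, and pass to the limit $\varepsilon\to 0^{+}$.

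For global asymptotic stability, I would study the two boundary trajectories $\xi^{a}(\cdot)$ and $\xi^{b}(\cdot)$ starting at $a$ and $b$. Since $f(a)\geq 0$ one has $\xi^{a}(h)\geq a$ for small $h>0$; Kamke's comparison theorem (the core tool for cooperative systems, which states that the semiflow preserves the componentwise order on initial data) then yields $\xi^{a}(t+h)\geq \xi^{a}(t)$ for every $t\geq 0$, so $\xi^{a}(\cdot)$ is componentwise non-decreasing. Symmetrically $\xi^{b}(\cdot)$ is non-increasing. Both trajectories remain in $[a,b]$ by the invariance step, hence are bounded and monotone, so they converge as $t\to +\infty$ to points $a^{\infty},b^{\infty}\in[a,b]$; passing to the limit in the ODE identifies these limits as equilibria. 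By the uniqueness hypothesis, $a^{\infty}=b^{\infty}=p$. For any $\xi^{0}\in[a,b]$, the monotonicity of the semiflow gives $\xi^{a}(t)\leq \xi^{\xi^{0}}(t)\leq \xi^{b}(t)$, and a squeeze argument yields $\xi^{\xi^{0}}(t)\to p$. Stability of $p$ on $[a,b]$ would then be obtained by applying the same invariance and squeeze arguments to small sub-boxes centered at $p$.

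The main obstacle is Kamke's monotonicity theorem for the semiflow, on which essentially everything else rests. In our setting it reduces to a differential-inequality argument: for two solutions $\xi(\cdot)$ and $\tilde\xi(\cdot)$ with $\xi(0)\leq \tilde\xi(0)$ componentwise, the difference $\tilde\xi-\xi$ satisfies a linear non-autonomous system whose off-diagonal coefficients inherit the non-negativity \eqref{fcooperative}, so a Gr\"onwall-type argument, combined with a small order-preserving perturbation to handle degenerate cases (such as $f_i(a)=0$), preserves the order. Rather than reproducing the full proof of Kamke's theorem, I would cite the standard reference \cite{anguelov2012mathematical}, verify its hypotheses on $B$, and use it as a black box to drive the arguments above.
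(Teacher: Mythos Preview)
Your argument is correct and follows the standard route for cooperative systems: Nagumo-type invariance of the order interval, monotone convergence of the extremal trajectories $\xi^a$ and $\xi^b$ to equilibria, and a squeeze via Kamke's comparison principle. There is nothing to fault in the outline.

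However, there is no ``paper's own proof'' to compare against. The paper does not prove this theorem at all: it is introduced with the sentence ``We recall the following theorem \cite[Theorem 6]{anguelov2012mathematical}'' and is used purely as a black-box tool to establish items \ref{0stabRpetit} and \ref{R=rgrand-stab} of Theorem~\ref{stablityofstates}. So where you propose to cite \cite{anguelov2012mathematical} only for Kamke's comparison theorem and then build the rest yourself, the paper simply cites the entire statement from that reference and moves on. Your approach buys a self-contained argument; the paper's buys brevity.
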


We apply this theorem with
\begin{gather}
a=(0,0,0)^T,
\label{defa=0}
\\
b=b_\lambda=(K,\lambda, \lambda)^T,
\label{b=blambda}
\end{gather}
where $\lambda\in(0,+\infty)$. One has
\begin{gather}
\cup_{\lambda\geq \lambda_0} [a,b_\lambda]= B\; \forall \lambda_0,
\label{union=B}
\\
\text{there exists $\lambda_0>0$ such that } f_i(b_\lambda)\leq 0 \; \forall i \in \{1,2,3\},\; \forall
\lambda >\lambda_0.
\label{good-lambda-grand}
\end{gather}
 Property~\eqref{0GASonB} readily follows from \eqref{0seul0}, Theorem~\ref{see:monotoneLemma}, \eqref{union=B}, and \eqref{good-lambda-grand}. This concludes the proof of \ref{0stabRpetit}.

Let us now prove \ref{R=rgrand-stab}. The proof of \eqref{stabXE0-critical} is similar to the proof of
\eqref{0GASonB}: just replace $\eqref{defa=0}$ by $a=X_{E_0}$. The instability of $X_{E_0}$ (property \eqref{XE0unstable}) readily follows from \eqref{basin-attraction-0-critical} since $X_{E_0}$ belongs to the closure of $\left\{(E,F,M)^T\in [0,+\infty)^3:\; E<E_0,\; F<F_0 \text { and } M<M_0 \right\}$. All that remains is to prove \eqref{basin-attraction-0-critical}. For this proof, let, for $\varepsilon\in (0,E_0)$,
\begin{gather}
b^\varepsilon :=
\begin{pmatrix}
E_0-\varepsilon\\
\frac{\nu\nu_E}{\delta_F}\left(E_0-\varepsilon\right)
\\
\frac{(1-\nu)\nu_E}{\delta_M}\left(E_0-\varepsilon\right)
\end{pmatrix}.
\end{gather}
Clearly
\begin{gather}
f_2(b^\varepsilon)=f_3(b^\varepsilon)=0\; \forall \varepsilon\in (0,E_0).
\end{gather}
Since $R=r$, one gets, using \eqref{defrgrand}, \eqref{defR}, and \eqref{defE0},
\begin{align}
&
K = \frac{4R\delta_M}{\eta (1-\nu)\nu_E (R-1)^2},
\label{valueK-critical}
\\
&
E_0  = \frac{K}{2R}(R-1).
\label{valueE0-critical}
\end{align}
Using \eqref{deffxi}, \eqref{valueK-critical}, and \eqref{valueE0-critical}, one gets that
\begin{gather}
\frac{f_1(b^\varepsilon)}{\varepsilon^2}=-\rho^2\frac{\delta_M(\nu_E+\delta_E)(E_0-\varepsilon)}{\delta_M+ (1-\nu)\nu_E\eta (E_0-\varepsilon)},
\label{f1epsilon2}
\end{gather}
with
\begin{gather}
\label{defrho}
\rho := \frac{\eta (1-\nu)\nu_E (R-1)}{2\delta_M}.
\end{gather}
Note that for $R={\rgrand}$, \eqref{defrgrand}, \eqref{defR}, and \eqref{defrho} imply that
\begin{gather}
\rho>0,
\label{rho>0}
\end{gather}
which, together with \eqref{f1epsilon2}, implies that
\begin{equation}
\text{Therefore, there exists }\varepsilon_0 \in (0,E_0) \text{ such that } f_1(b^\varepsilon)\leq 0 \; \forall \varepsilon
\in (0,\varepsilon_0].
\end{equation}
Note that, for every $\varepsilon\in (0,E_0)$,
\begin{align}
&\left(f(\xi)=\textbf{0} \text{ and } \xi \in [\textbf{0},b^\varepsilon]\right)\Leftrightarrow \left(\xi=\textbf{0}\right),
\label{f=0xi=0}
\\
& [\textbf{0},b^\varepsilon]\subset B.
\label{eps-subsetB}
\end{align}
 From Theorem~\ref{see:monotoneLemma}, \eqref{f=0xi=0}, and \eqref{eps-subsetB} one gets that, for every $\varepsilon \in (0,\varepsilon_0]$,
\begin{equation}
\text{$\textbf{0}$ is globally asymptotically stable on $[\textbf{0},b^\varepsilon]$ for $\dot \xi =f(\xi)$.}
\label{0GAS-varepsilon}
\end{equation}
Since
\begin{equation}
\cup_{\varepsilon\in (0,\varepsilon_0]} [\textbf{0},b^\varepsilon]= \left\{(E,F,M)^T\in [0,+\infty)^3:\; E<E_0,\; F<F_0 \text { and } M<M_0 \right\},
\label{union=good-set}
\end{equation}
\eqref{basin-attraction-0-critical} follows from \eqref{0GAS-varepsilon}.

Finally, let us prove \ref{R>rgrand-stab}. In this case we assume that $R>{\rgrand}$. Let $X_E$ be an equilibrium of $f$ (recall \eqref{defXE}). By \eqref{f1<0horsB},
\begin{gather}
E<K.
\end{gather}
 The Jacobian matrix of $f$ evaluated at the equilibrium $X_E$, is given by:
 
\begin{align}
		J_f(X_E)=\begin{pmatrix}
			-\frac{(\nu_E+\delta_E)}{(1-\frac{E}{K})} &\frac{\beta_E}{R}& \frac{(\nu_E+\delta_E)\delta_M^2}{\eta (1-\nu)^2\nu_E^2 R}\frac{1}{E(1-\frac{E}{K})}\\
			\nu\nu_E&-\delta_F&0\\
			(1-\nu)\nu_E &0 & -\delta_M
		\end{pmatrix} .
\end{align}

Its characteristic polynomial is $P(\lambda) = \lambda^3+ Q_1\lambda^2+Q_2\lambda +Q_3$, where
 \begin{gather}
     Q_1 := \delta_M+\delta_F+ \frac{(\nu_E+\delta_E)}{1-\frac{E}{K}},\label{Q1}\\
     Q_2 := (\nu_E+\delta_E)\frac{\eta R(1-\nu)\nu_E (\delta_M E+ \delta_F\frac{E^2}{K})-\delta_M^2}{\eta R(1-\nu)\nu_E E(1-\frac{E}{K})}+\delta_M\delta_F,\label{Q2}\\
     Q_3 := (\nu_E+\delta_E)\delta_M\delta_F\Bigg[\frac{\eta R(1-\nu)\nu_E \frac{E^2}{K}-\delta_M}{\eta R(1-\nu)\nu_E E(1-\frac{E}{K})}\Bigg].\label{Q3}
 \end{gather}
For $E = E_2$, we have
\begin{gather}
Q_1 > 0, \;\frac{Q_1 Q_2 - Q_3}{Q_1} > 0, \text{ and } Q_3 > 0.
\end{gather}
Thus, by the Routh criterion, all roots of $P$ have strictly negative real parts. Hence, $X_{E_2}$ is locally asymptotically stable for \eqref{eq:originepde}.

For $E = E_1$ it is easy to prove that $Q_3 < 0$. Therefore $X_{E_1}$ is unstable for~\eqref{eq:originepde}.

\subsection{Proof of Theorem~\ref{stablityofsitode} }
In this section, we assume that $u=0$ and that $R<\rgrand$. As in the proof of \ref{0stabRpetit} of Theorem~\ref{stablityofstates},
\begin{gather}
B_s:=\left\{(\xi^T,M_s)^T\in [0,+\infty)^4:\; E\leq K\right\} \text{ is positively invariant for \eqref{eq:S1Epde1}-\eqref{eq:S1Epde3}-\eqref{eq:S1Epde2}-\eqref{eq:S1Epde4}, }
\label{Bspositivelyinvariant}
\end{gather}
and it suffices to prove that
\begin{gather}
\text{$(0,0,0,0)^T$  is globally  asymptotically stable on  $B_s$ for \eqref{eq:S1Epde1}-\eqref{eq:S1Epde3}-\eqref{eq:S1Epde2}-\eqref{eq:S1Epde4}. }
\label{0GASonBMs}
\end{gather}
As $u = 0$, \eqref{eq:S1Epde4} implies that, for every $t \geq 0$,
\begin{gather}
    M_s(t) = M_s(0) e^{-\delta_s t}.
\end{gather}
Substituting this equation into the system \eqref{eq:S1Epde1}-\eqref{eq:S1Epde3}-\eqref{eq:S1Epde2}, we obtain:
\begin{gather}
    \dot{\xi} = g(t, \xi),
\end{gather}
where
\begin{align}
    g(t, \xi): = \begin{pmatrix}
        \beta_E F \left(1 - \frac{E}{K}\right) \frac{\eta M}{1 + \eta M + \eta M_s(0) e^{-\delta_s t}} - (\nu_E + \delta_E)E \\
        \nu \nu_E E - \delta_F F \\
        (1 - \nu) \nu_E E - \delta_M M
    \end{pmatrix}.
\end{align}
Let us point out that
\begin{gather}
g_i(t,\xi)\leq f_i(\xi)\; \forall \xi \in B, \; \forall t\geq 0,\; \forall i\in \{1,2,3\}.
\end{gather}
Hence, using also \eqref{fcooperative}, the Kamke comparison principle \cite{1932-Mamke-A} (see also, for example, \cite[Theorem 10, Chapter I, Section 4, page 29]{1965-Coppel-book} or \cite[Lemma 4.2, Chapter 1, Section 4, page 51]{1968-Krasnoselskii-book}) gives that
\begin{gather}
\left(\dot \xi =f(\xi), \; \dot {\tilde \xi} = g(t,\tilde \xi), \text{ and } \xi(0)=\tilde \xi(0)\in B \right)
\Rightarrow \left( \tilde \xi_i (t)\leq \xi_i(t) \; \forall i\in \{1,2,3\}, \; \forall t\geq 0\right),
\label{comparison-fg}
\end{gather}
which, together with the proof of \ref{0stabRpetit} of Theorem~\ref{stablityofstates} given in Section~\ref{sec:proof-stability-states}, implies Theorem~\ref{stablityofsitode}.

\section{\texorpdfstring{Proof of Theorem~\ref{th-well-posed-EFM} when \eqref{assump-init-reg} holds}{Proof of Theorem when assumption holds}}

\label{app-well-posed}
In this section we assume that \eqref{assump-init-reg} holds and prove that the Cauchy problem \eqref{eq:MosquitoLifemodelpde} has a unique weak solution on $[0,+\infty)$.
 For $ E \in C^0([0,T];L^1(\Omega))$, we define
\begin{gather}
    \norm{E}_{\mathcal{C}^0_TL_{\Omega}^1} := \max_{t\in [0,T]}{\norm{E(t)}_{L^1(\Omega)}}.
\end{gather}
The vector space $ C^0([0,T];L^1(\Omega))$ equipped  the norm $\norm{\cdot}_{\mathcal{C}^0_TL_{\Omega}^1}$ is a Banach  space.
Let
\begin{multline}
    \mathcal{C}:= \left\{ E: [0,T]\times\Omega\rightarrow [0,+\infty):\; E \in C^0([0,T];L^1(\Omega)),\;\right.\\ \left. 0\leq E(t,x)\leq \max\{K(x),E^0(x)\}\; \;\forall \; (t,x)\in (0,T)\times \Omega\right\}.
\end{multline}
The set $\mathcal{C}$ is a  non-empty closed subset of $\mathcal{C}^0([0,T]; L^1(\Omega))$.
Let us  define an application
\begin{gather}
    \mathcal{Q} : e\in\mathcal{C}\mapsto E,
\end{gather}
where $E:[0,T]\times\Omega \rightarrow [0,+\infty)$ is the solution of the Cauchy problem
\begin{align}
    &\frac{\partial E}{\partial t}=\beta_E f(1-\frac{E}{K})\frac{\eta m}{1+\eta m} - (\nu_E+\delta_E)E,\; E(0)=E^0,\label{eqofE}
\end{align}
with $m\in C^0([0,T];L^1(\Omega))$ and $f\in C^0([0,T];L^1(\Omega))$ being the  weak solutions of
\begin{align}
    &\frac{\partial f}{\partial t} - d_1 \Delta f +\delta_F f =\nu\nu_E e \text{ in }(0,T)\times \Omega,\;\frac{\partial f}{\partial n}=0\text{ on }(0,T)\times \partial \Omega,\; f(0)= F^0,\label{eqofF}\\
    &\frac{\partial m}{\partial t} - d_2 \Delta m +\delta_M m =(1-\nu)\nu_E e\text{ in }(0,T)\times \Omega,\;\frac{\partial m}{\partial n}=0\text{ on }(0,T)\times \partial \Omega,\; m(0)= M^0.\label{eqofM}
\end{align}
One easily checks that
\begin{equation}
    \label{EinC}
    E\in \mathcal{C}.
\end{equation}
Let us point out that $(e, f,m)^T$ is  a weak solution of the  Cauchy problem \eqref{eq:MosquitoLifemodelpde} if and only if
$\mathcal{Q}(e)=e$. We are first going to prove that $\mathcal{Q}$ is a contraction map if $T$ is small  enough, which implies
that Theorem~\ref{th-well-posed-EFM} holds at least if $T>0$ is small enough. Next, we prove the existence of the solution of the Cauchy problem \eqref{eq:MosquitoLifemodelpde} for all time.

Let $\hat{e}\in\mathcal{C}$. We define $\hat{f}\in C^0([0,T];L^1(\Omega))$ and
$\hat{m}\in C^0([0,T];L^1(\Omega))$ to be the weak solutions of
\begin{align}
    &\frac{\partial \hat f}{\partial t} - d_1 \Delta  \hat f +\delta_F  \hat f =\nu\nu_E \hat e\text{ in }(0,T)\times \Omega,\;\frac{\partial \hat f}{\partial n}=0\text{ on }(0,T)\times \partial \Omega,\; \hat f(0,\cdot)= F^0(\cdot),\label{eqofhatF}\\
    &\frac{\partial  \hat m}{\partial t} - d_2 \Delta \hat m +\delta_M \hat m =(1-\nu)\nu_E   \hat e\text{ in }(0,T)\times \Omega,\;\frac{\partial \hat m}{\partial n}=0\text{ on }(0,T)\times \partial \Omega,\; m(0,\cdot)= M^0(\cdot).\label{eqofhatM}
\end{align}
Let $a\in C^0([0,T];L^1(\Omega))$  be defined by
\begin{gather}
\label{defa}
    a:= f-\hat{f}.
\end{gather}
Then $a$ is the weak solution of
\begin{gather}\label{eq:a-equation}
\frac{\partial a}{\partial t} - d_1 \Delta a +\delta_F a =\nu\nu_E (e-\hat e) \text{ in }(0,T)\times \Omega,\;\frac{\partial a}{\partial n}=0\text{ on }(0,T)\times \partial \Omega,\; a(0,\cdot)=0.
\end{gather}

We denote by $t\in[0,+\infty)\rightarrow S_1(t)\in \mathcal{L}(L^1(\Omega);L^1(\Omega))$ the semi-group associated to $-d_1\Delta +\delta_F \text{Id}$
with the Neumann boundary condition on $\partial \Omega$. In other words, for $\phi^0\in L^1(\Omega)$, $\phi: [0,T]\times \Omega \rightarrow \RR$, $\phi(t):=S_1(t)\phi^0$ is the weak solution of

\begin{gather}
\label{def-S1}
\frac{\partial \phi}{\partial t} - d_1 \Delta \phi +\delta_F \phi =0 \text{ in }(0,T)\times \Omega,\;\frac{\partial \phi }{\partial n}=0\text{ on }(0,T)\times \partial \Omega,\; \phi(0,\cdot)=\phi^0.
\end{gather}
Let us recall that, for every $1\leq p< \infty$, there exists a constant $C_0>0$ such that, for every $t\in(0,1]$ and for every $\phi^0\in L^1(\Omega)$,
\begin{equation}
 \label{effect-reg}
\norm{S_1(t)\phi^0}_{L^p}\leq C_0t^{-\frac{(p-1)}{p}} \norm{\phi^0}_{L^1}.
\end{equation}
This property is  a direct consequence of the kernel estimate  given in \cite[Theorem 3.2.9]{1989-Davies-book}; see also the proof of \cite[Proposition 3.5.7]{1998-Cazenave-Haraux-book} which deals with the Dirichlet boundary condition
 on  $(0,T)\times \partial\Omega$.

From \eqref{eq:a-equation} and Duhamel’s formula, we have
\begin{gather}
\label{expression-a-Duhamel}
    a(t)= \nu\nu_E \int_0^t  S_1(t-s)(e(s)-\hat{e}(s)) ds \; \forall\;t\in [0,T].
\end{gather}
Let $p\in[1,+\infty)$. From \eqref{effect-reg} and \eqref{expression-a-Duhamel}, one has, for all $t\in [0,T]$,
\begin{align}
    \norm{a(t)}_{L^p}&\leq \nu\nu_E\int_0^t \norm{ S_f(t-s)(e-\hat e)(s)}_{L^p} ds\nonumber\\&
    \leq  C\int_0^t (t-s)^{-\frac{(p-1)}{p}}\norm{e(s)-\hat{e}(s)}_{L^1} ds
    \nonumber \\&\leq  C\norm{e-\hat{e}}_{\mathcal{C}^0_TL_{\Omega}^1} \int_0^t (t-s)^{-\frac{(p-1)}{p}} ds,
    \nonumber\\& \leq   C \norm{e-\hat{e}}_{\mathcal{C}^0_TL_{\Omega}^1} t^{\frac{1}{p}}\label{f-hatf-old},
\end{align}
which with \eqref{defa} leads to
\begin{align}
\label{f-hatf}
\norm{f(t)-\hat f(t)}_{L^p}\leq   C \norm{e-\hat{e}}_{\mathcal{C}^0_TL_{\Omega}^1} t^{\frac{1}{p}}.
\end{align}
In  \eqref{f-hatf-old} and \eqref{f-hatf}, and until the end of this appendix, $C$ denotes  constants which may vary form place to place but are independent of $t\in[0,T]$, $T\in (0,1]$, $E^0$, $F^0$, $M^0$, $e$, and $\hat e$.
Similarly,
\begin{gather}\label{m-hatm}
    \norm{m(t)-\hat m(t)}_{L^p}\leq   C \norm{e-\hat{e}}_{\mathcal{C}^0_TL_{\Omega}^1} t^{\frac{1}{p}}.
\end{gather}

As $e\in \mathcal{C}$, one has  $ e(t,x)\leq \bar E^0(x):=\max\{ K(x), E^0(x)\}$. Therefore,  using \eqref{eqofF} and the maximum principle for parabolic equations,
\begin{gather}
\label{fleqbarf}
    0\leq f(t,x)\leq \bar f(t,x) \; \forall (t,x)\in (0,T)\times \Omega,
\end{gather}
where $\bar f\in C^0([0,T];L^1(\Omega)) $ is the weak solution of
\begin{align}
\label{def-barf}
    &\frac{\partial \bar f}{\partial t} - d_1 \Delta \bar f +\delta_F \bar f =\nu\nu_E \bar E^0 \text{ in } (0,T)\times \Omega ,\;\frac{\partial \bar f}{\partial n}=0 \text{ on } (0,T)\times \partial \Omega, \; \bar f(0)= F^0.
\end{align}
From \eqref{def-barf} and Duhamel's formula, one has
\begin{gather}
\label{Duhamel-bar-f}
    \bar f(t) = S_1(t)F^0 + \nu\nu_E  \int_0^t S_1(t-s)\bar E^0 ds,
\end{gather}
which, together with \eqref{effect-reg}, implies that
\begin{align}
\nonumber     \norm{\bar f(t)}_{L^p} &\leq C (\norm{F^0}_{L^1}t^{-\frac{(p-1)}{p}} + \norm{\bar E^0}_{L^1}\int_0^t (t-s)^{-\frac{p-1}{p}}ds)
    \\
\label{estimateL2barf}&\leq  C(\norm{F^0}_{_{L^1}}t^{-\frac{p-1}{p}} +  \norm{\bar E^0}_{_{L^1}}t^{\frac{1}{p}}),
\end{align}
which, with \eqref{fleqbarf}, leads to
\begin{gather}
\label{estimateLpf} \norm{f(t)}_{L^p}\leq C( \norm{F^0}_{L^1}+\norm{\bar E^0}_{L^1}) t^{-\frac{p-1}{p}}.
\end{gather}
For all $t\geq 0$ and all $x\in\Omega$, we have from \eqref{eqofE} (see also \eqref{expression-explicit-E})
\begin{gather}\label{E(t,x)}
     E(t,x) = E^0(x) e^{-\int_0^t J(s,x)ds} +\beta_E  \int_0^t \frac{\eta f(s,x) m(s,x)}{1+\eta m(s,x)} e^{-\int_s^t J(\tau,x)d\tau}  ds
\end{gather}
where
\begin{gather}
\label{defJ}
J(t,x) := (\nu_E+\delta_E) + \frac{\eta\beta_E f(t,x) m(t,x)}{K(1+\eta m(t,x))}.
\end{gather}
and
\begin{gather}
  \hat E(t,x) = E^0(x) e^{-\int_0^t \hat J(s,x)ds} +\beta_E  \int_0^t \frac{\eta\hat  f(s,x) \hat m(s,x)}{1+\eta \hat m(s,x)} e^{-\int_s^t \hat J(\tau,x)d\tau}  ds
\end{gather}
\begin{gather}
\hat J(t,x) := (\nu_E+\delta_E) + \frac{\eta\beta_E \hat f(t,x) \hat m(t,x)}{K(1+\eta \hat m(t,x))}.
\end{gather}

From the expressions of $E$ and $\hat E$, we have
\begin{align}
    \norm{E(t)-\hat E(t)}_{L^1}\leq R_1(t)+R_2(t)+R_3(t),
\label{EleqR}
\end{align}
where $R_1,R_2$, and $R_3$ are defined by
\begin{align}
\label{defR1}
    &R_1(t):=   \int_\Omega E^0(x)\left\vert  e^{-\int_0^t J(s,x)ds}- e^{-\int_0^t \hat J(s,x)ds} \right\vert dx,\\
\label{defR2}    &R_2(t):=\beta_E \int_0^t\int_\Omega  \frac{\eta f(s,x) m(s,x)}{1+\eta m(s,x)} \left\vert e^{-\int_s^t J(\tau,x)d\tau}   -   e^{-\int_s^t \hat J(\tau,x)d\tau}  \right\vert dx ds, \\
\label{defR3}    &R_3(t) :=\beta_E \int_0^t\int_\Omega\left\vert  \frac{\eta f(s,x) m(s,x)}{1+\eta m(s,x)}    -  \frac{\eta \hat f(s,x) \hat m(s,x)}{1+\eta \hat m(s,x)} \right\vert e^{-\int_s^t \hat J(\tau,x)d\tau}  dx ds.
\end{align}
Let us now give upper bounds of $R_1$, $R_2$, and $R_3$.

\textbf{$\bullet$ Upper bound of $R_1$.}\\
For  all $t\in [0,T]$ and all $x\in \Omega$, one has
\begin{align}\label{eq:J-hatJ}
  \nonumber  \left\vert e^{-\int_0^t J(\tau,x)d\tau}   -   e^{-\int_0^t \hat J(\tau,x)d\tau}  \right\vert &\leq  \int_0^t\left\vert J(\tau,x)  -    \hat J(\tau,x)  \right\vert d\tau \\
   \nonumber &\leq C\int_0^t  f(\tau,x) \left\vert m(\tau,x)-\hat m(\tau,x)\right\vert  d\tau \\&\phantom{\leq }+ C \int_0^t\left \vert  f(\tau,x)-\hat{f}(\tau,x) \right\vert d\tau
\end{align}
which, together with \eqref{defR1}, implies that
\begin{multline}\label{R1ineq1}
R_1(t) \leq C \int_0^t\left(\int_\Omega   E^0(x)f(\tau,x) \left\vert m(\tau,x)-\hat m(\tau,x)\right\vert     dx \right) d\tau\\+C \int_0^t\left(\int_\Omega  E^0(x) \left\vert  f(\tau,x)-\hat{f}(\tau,x) \right\vert    dx\right) d\tau.
\end{multline}
Note that

\begin{equation}
\label{sum-inverses}
\frac{r-1}{2r}+\frac{r-1}{2r}+\frac{1}{r}= 1 .
\end{equation}

Using H\"{o}lder's inequality, \eqref{R1ineq1} and  \eqref{sum-inverses}, we have
\begin{multline}
\label{estimate-R1-inter}
    R_1(t) \leq C \int_0^t\norm{m(\tau)-\hat{m}(\tau)}_{L^{\frac{2r}{r-1}}} \norm{f(\tau)}_{_{_{L^{\frac{2r}{r-1}}}}}\norm{E^0 }_{_{L^r}}\, d \tau\\+C \norm{E^0}_{_{L^r}}\int_0^t\norm{f(\tau)-\hat{f}(\tau)}_{_{L^{\frac{r}{r-1}}}}d\tau.
\end{multline}
Using \eqref{m-hatm} for $p=2r/(r-1)$,  \eqref{estimateLpf} for $ p=2r/(r-1)$, and \eqref{estimate-R1-inter}, we get

\begin{multline}
    R_1(t) \leq C( (\norm{F^0}_{L^1}+\norm{\bar E^0}_{L^1}) \norm{E^0}_{L^r})\int_0^t s^{-\frac{1}{r}}  ds \norm{e-\hat{e}}_{\mathcal{C}^0_TL_{\Omega}^1}\\+C\norm{E^0}_{L^r}\int_0^ts^{\frac{r-1}{r}}ds
    \norm{e-\hat{e}}_{\mathcal{C}^0_TL_{\Omega}^1},
\end{multline}
which gives

\begin{align}\label{R1<}
    R_1(t) &\leq C(\norm{F^0}_{L^1}+\norm{\bar E^0}_{L^1}+1) \norm{E^0}_{L^r} T^\frac{r-1}{r} \norm{e-\hat{e}}_{\mathcal{C}^0_TL_{\Omega}^1}.
    \end{align}

\textbf{$\bullet$ Upper bound of $R_2$.}\\
For  all $0\leq s\leq t\leq T$ and all $x\in \Omega$, we have
\begin{gather}\label{eq:fm(1+etam)}
    \frac{\eta f(s,x) m(s,x)}{1+\eta m(s,x)}\leq f(s,x),
\end{gather}
Using \eqref{defR2},\eqref{eq:J-hatJ}, and \eqref{eq:fm(1+etam)}
\begin{multline}\label{R2ineq1}
R_2(t)\leq C \int_0^t\int_\Omega   f(s,x)\left(\int_s^tf(\tau,x)
\left\vert m(\tau,x)-\hat m(\tau,x)\right\vert d\tau   \right) dx ds
\\+C \int_0^t\int_\Omega  f(s,x) \left(\int_s^t\left\vert  f(\tau,x)-\hat{f}(\tau,x) \right\vert d\tau   \right) dx ds.
\end{multline}
Then, using H\"{o}lder's inequality and Fubini's theorem, we get

\begin{multline}
    R_2(t) \leq C\int_0^t \norm{f(s)}_{L^r}\int_s^t\norm{f(\tau)}_{L^{\frac{2r}{r-1}}} \norm{m(\tau)-\hat m(\tau)}_{L^{\frac{2r}{r-1}}} d\tau ds\\+ C\int_0^t \norm{f(s)}_{L^r} \int_s^t\norm{f(\tau)-\hat{f}(\tau)}_{L^{\frac{r}{r-1}}} d\tau ds.
\end{multline}
 Using \eqref{estimateLpf} with $p=r$ and $p=2r/(r-1)$, \eqref{m-hatm} with $p=2r/(r-1)$, and \eqref{f-hatf} with $p=r/(r-1)$, we get

\begin{align*}
    R_2(t) &\leq  C(\norm{F^0}_{L^1}+\norm{\bar E^0}_{L^1})^2  \int_0^t s^{-\frac{r-1}{r}}\int_s^t\tau^{-\frac{1}{r}}  d\tau ds \norm{e-\hat{e}}_{\mathcal{C}^0_TL_{\Omega}^1}\\
    &+  C( \norm{F^0}_{L^1}+\norm{\bar E^0}_{L^1}) \int_0^ts^{-\frac{r-1}{r}} \int_s^t \tau^{\frac{r-1}{r}} d\tau ds \norm{e-\hat{e}}_{\mathcal{C}^0_TL_{\Omega}^1}
    \\&\leq  C( \norm{F^0}_{L^1}+\norm{\bar E^0}_{L^1})\left(( \norm{F^0}_{L^1}+\norm{\bar E^0}_{L^1})  +t\right)t \norm{e-\hat{e}}_{\mathcal{C}^0_TL_{\Omega}^1}.
\end{align*}
Hence
\begin{gather}\label{R2<}
    R_2(t) \leq C\left( \left(\norm{F^0}_{L^1}+\norm{\bar E^0}_{L^1}\right)^2+1\right)  T \norm{e-\hat{e}}_{_{\mathcal{C}^0_TL_{\Omega}^1}}.
\end{gather}

\textbf{$\bullet$ Upper bound of  $R_3$.}\\
Note that, using \eqref{defJ}, for all $t\in [0,T]$ and for all $x\in\Omega$   $J(t,x)\geq 0$. Hence,  for all $s\leq t\in [0,T]$ and for all $x\in \Omega$,
\begin{gather}
    e^{-\int_s^t \hat J(\tau,x)d\tau}\leq 1,
\end{gather}
which, together with \eqref{defR3}, implies that
\begin{multline}\label{R3ineq1}
R_3(t)  \leq C  \int_0^t\left(\int_\Omega  f(s,x) \left\vert m(s,x)-\hat m(s,x)\right\vert     dx \right) ds\\+C \int_0^t\left(\int_\Omega  \left\vert  f(s,x)-\hat{f}(s,x) \right\vert    dx\right) ds.
    \end{multline}
So
\begin{equation}
    R_3(t) \leq  C  \int_0^t \norm{f(s)}_{L^r}\norm{m(s)-\hat{m}(s)}_{L^{\frac{r}{r-1}}} ds\\+C \int_0^t\norm{f(s)-\hat{f}(s)}_{L^\frac{r}{r-1}} ds.
\end{equation}
Using then \eqref{estimateLpf} for $p=r$, \eqref{m-hatm} for $p=r/(r-1)$, and \eqref{f-hatf} for $p=r/(r-1)$, we get

\begin{align}
  \nonumber   R_3(t) &\leq  C( \norm{F^0}_{L^1}+\norm{\bar E^0}_{L^1})   \int_0^t ds\norm{e-\hat{e}}_{\mathcal{C}^0_TL_{\Omega}^1}+ C\int_0^ts^{\frac{r-1}{r}} ds\norm{e-\hat{e}}_{\mathcal{C}^0_TL_{\Omega}^1}
 \label{R3<}   \\&\leq C( \norm{F^0}_{L^1}+\norm{\bar E^0}_{L^1}+1) T\norm{e-\hat{e}}_{\mathcal{C}^0_TL_{\Omega}^1}.
\end{align}

From \eqref{EleqR}, \eqref{R1<}, \eqref{R2<}, and \eqref{R3<}, we conclude that
\begin{gather}
\label{esti-E-hatE}
    \norm{E-\hat{E}}_{\mathcal{C}^0_TL_{\Omega}^1}\leq  C (\norm{F^0}_{L^1}^2+\norm{E^0}_{L^r}^2+1)  T^\frac{r-1}{r} \norm{e-\hat{e}}_{\mathcal{C}^0_TL_{\Omega}^1},
\end{gather}
which shows that $\mathcal{Q}$ is a contraction map if
\begin{gather}
T\leq \left(\frac{1}{C(\norm{F^0}_{L^1}^2+\norm{E^0}_{L^r}^2+1)}\right)^{\frac{r}{r-1}}.
\end{gather}
This concludes the proof of the uniqueness of the weak solution of the Cauchy problem \eqref{eq:MosquitoLifemodelpde} and the existence if $T>0$ is small enough.

To get the existence for all time it suffices to see that if the weak solution is not defined for all time, there exists $\bar T>0$ and a weak solution $(E,F,M)^T\in C^0([0,\bar T);L^1(\Omega)^3)$  of the Cauchy problem \eqref{eq:MosquitoLifemodelpde} such that
\begin{gather}
\label{blow-up}
\limsup_{t\rightarrow \bar T^-} \norm{E(t)}_{L^r}+ \norm{F(t)}_{L^1} =+\infty.
\end{gather}
Let us check that \eqref{blow-up} cannot hold. Concerning $E$, let us first point out that for every $T<\bar T $, $E$ restricted to $[0,T]\times \Omega$ has to be in $\mathcal{C}$. Hence,
\begin{gather}
\label{majoration-E}
0\leq E(t,x)\leq \max\{K(x),E^0(x)\} \;\forall \; (t,x)\in (0,\bar T )\times \Omega,
\end{gather}
which implies that
\begin{gather}
\label{blow-up-not-E}
\limsup_{t\rightarrow \bar T^-} \norm{E(t)}_{L^r}<+\infty.
\end{gather}
Next, using \eqref{eqintF-V} and \eqref{majoration-E},
we get
\begin{gather}
\norm{F(t)}_{L^1}\leq \norm{F^0}_{L^1} + \nu \nu_E t \int_\Omega \max\{K(x),E^0(x)\}\, dx \; \forall t \in (0,\bar T),
\end{gather}
which shows that
\begin{gather}
\label{blow-up-not-F}
\limsup_{t\rightarrow \bar T^-} \norm{F(t)}_{L^1}<+\infty.
\end{gather}
Properties \eqref{blow-up-not-E} and \eqref{blow-up-not-F} are in contradiction with \eqref{blow-up}. This concludes the proof of Theorem~\ref{th-well-posed-EFM}.
\hfill \qedsymbol
\begin{remark}
    We have proven that in dimension  $N = 2$,  the Cauchy problem \eqref{eq:MosquitoLifemodelpde}
    is well-posed when $E^0\in L^r(\Omega)$ for some
     $r>1$. The same proof also works  for $N=3$ but does not work for $N\geq 4$. In dimension $N=1$ the same proof allows to get a better result since it works also for $r=1$. It is an open problem if in dimension $N=2$ we also have the well-posedness for $r=1$.
\end{remark}
\section{\texorpdfstring{Proof of Theorem~\ref{th-well-posed-EFMMs} when \eqref{reg-u-Lip} and \eqref{assump-init-avec-Ms} hold}{Proof of Theorem when conditions hold}}

\label{Appendix-B}
In this section we assume that \eqref{reg-u-Lip} and \eqref{assump-init-avec-Ms} hold and we prove that the Cauchy problem \eqref{eq:Mosquito-sit-u} has a unique weak solution on $[0,+\infty)$.  The strategy is the same as in the proof of Theorem~\ref{th-well-posed-EFM}, when \eqref{assump-init-reg} holds, given in Appendix~\ref{app-well-posed}. Let us briefly explain the required modifications.
For $ (E,M_s) \in C^0([0,T];L^1(\Omega)\times L^1(\Omega))$, we define
\begin{gather}
    \norm{(E,M_s)}_{\mathcal{C}^0_TL_{\Omega}^1L_{\Omega}^1} := \norm{E}_{\mathcal{C}^0_TL_{\Omega}^1}+\norm{M_s}_{\mathcal{C}^0_TL_{\Omega}^1}.
\end{gather}
The vector space $ C^0([0,T];L^1(\Omega)\times L^1(\Omega))$
equipped  the norm $\norm{\cdot}_{\mathcal{C}^0_TL_{\Omega}^1L_{\Omega}^1}$ is a Banach  space.
Let
\begin{multline}
    \tilde{\mathcal{C}}:= \left\{ (E,M_s): [0,T]\times\Omega\rightarrow [0,+\infty)\times [0,+\infty):\; E \in C^0([0,T];L^1(\Omega)),\;M_s \in C^0([0,T];L^1(\Omega))\right.\\ \left. 0\leq E(t,x)\leq \max\{K(x),E^0(x)\}, \; \forall (t,x)\in (0,T)\times \Omega \right\}.
\end{multline}
The set $\tilde{\mathcal{C}}$ is a  non-empty closed subset of $\mathcal{C}^0([0,T]; L^1(\Omega)\times L^1(\Omega))$.
Let us  define an application
\begin{gather}
     \tilde{\mathcal{Q}} : (e,m_s)\in\mathcal{C}\mapsto (E,M_s),
\end{gather}
where $E$ is the solution of the Cauchy problem
\begin{align}
    \frac{\partial E}{\partial t}=\beta_E f(1-\frac{E}{K})\frac{\eta m}{1+\eta (m+\gamma m_s)} - (\nu_E+\delta_E)E, \; E(0,\cdot)=E^0,\label{eqofEs}
\end{align}
with $f\in C^0([0,T];L^1(\Omega))$ and $m\in C^0([0,T];L^1(\Omega))$ being again the  weak solutions of  \eqref{eqofF} and \eqref{eqofM} respectively, and $M_s$ being the weak solution of
\begin{gather}
\label{defMs}
\left\{
\begin{array}{l}
\frac{\partial{M_s}}{\partial t} - d_3\Delta M_s  =u((e,f,m,m_s)^T)-\delta_s M_s \text{ in } [0,T]\times \Omega,
\\
[0.5em]
\frac{\partial{M_s}}{\partial n}=0 \text{ on } [0,T]\times \partial \Omega,
\\
M_s(0,\cdot)=M_s^0(\cdot).
\end{array}
\right.
\end{gather}
One easily checks that
\begin{equation}
    \label{EintildeC}
    (E,M_s)\in \tilde{\mathcal{C}}.
\end{equation}
Let us point out that $(e, f,m,m_s)^T$ is  a solution of the
Cauchy problem \eqref{eq:Mosquito-sit-u} if and only if
$\tilde{\mathcal{Q}}(e,m_s)=(e,m_s)$. We are first going to prove that $\mathcal{Q}$ is a contraction map if $T$ is small  enough, which implies
that Theorem~\ref{th-well-posed-EFM} holds at least if $T>0$ is small enough. Next, we prove the existence of the solution of the Cauchy problem \eqref{eq:Mosquito-sit-u} for all time.

Let $(\hat{e},\hat m_s)\in\tilde{\mathcal{C}}$. We define $\hat{f}\in C^0([0,T];L^1(\Omega))$ and
$\hat{m}\in C^0([0,T];L^1(\Omega))$ as in \eqref{eqofhatF} and \eqref{eqofhatM}, respectively, and define $\hat M_s\in C^0([0,T];L^1(\Omega))$  as the weak solution of
\begin{gather}
\label{defhatMs}
\left\{
\begin{array}{l}
\frac{\partial{\hat M_s}}{\partial t} - d_3\Delta \hat M_s +\delta_s \hat M_s =u((\hat e,\hat f,\hat m,\hat m_s)^T) \text{ in } [0,T]\times \Omega,
\\
[0.5em]
\frac{\partial{\hat M_s}}{\partial n}=0 \text{ on } [0,T]\times \partial \Omega,
\\
[0.5em]
\hat M_s(0,\cdot)= M_s^0(\cdot).
\end{array}
\right.
\end{gather}
Let us concentrate on the main new estimates compared to Appendix~\ref{app-well-posed}. Let us first deal with the upper bound of
\begin{gather}
\label{defcalM}
\mathcal{M}(t):= \norm{M_s(t)-\hat M_s(t)}_{L^1}.
\end{gather}
Note that it is also for this estimate that we use the assumption $F^0\in L^r$.  We choose $\mathcal{E}\in (0,+\infty)$ such that
\begin{gather}
\label{property-calE}
2\max\{K(x),E^0(x)\}\leq \mathcal{E}.
\end{gather}
 From \eqref{reg-u-Lip}, \eqref{defMs}, \eqref{defhatMs}, and \eqref{defcalM}, we get
\begin{multline}
\label{upper-bound-M-1}
\mathcal{M}(t)\leq CT\left(\norm{e-\hat{e}}_{\mathcal{C}^0_TL_{\Omega}^1}+
\norm{f-\hat{f}}_{\mathcal{C}^0_TL_{\Omega}^1}+\norm{m-\hat{m}}_{\mathcal{C}^0_TL_{\Omega}^1}
+\norm{m_s-\hat{m_s}}_{\mathcal{C}^0_TL_{\Omega}^1}\right)
\\
+C\int_0^t \norm{(f(\tau)+\hat f(\tau))
\left(\left|m(\tau)-\hat m(\tau)\right|+ \left| m_s(\tau)-\hat m_s(\tau)\right|\right) }_{L^1}d\tau.
\end{multline}
In \eqref{upper-bound-M-1} and until the end of this appendix, $C$ denotes  constants which may vary form place to place but are independent of $t\in[0,T]$, $T\in (0,1]$, $F^0$, $M^0$, $e$, $\hat e$, $m_s$, and $\hat m_s$. Note that these constants  may now depend on $\mathcal{E}$ and so on $E^0$ through \eqref{property-calE}.
Let us recall that there exists a constant $C_1>0$ such that, for every $t\in(0,1]$ and for every $\phi^0\in L^r(\Omega)$,
\begin{equation}
 \label{effect-reg-infty}
\norm{S_1(t)\phi^0}_{L^\infty}\leq C_1t^{-\frac{1}{r}} \norm{\phi^0}_{L^r}.
\end{equation}
This property, as \eqref{effect-reg}, is again a direct consequence of the kernel estimate  given in \cite[Theorem 3.2.9]{1989-Davies-book}; see also the proof of \cite[Proposition 3.5.7]{1998-Cazenave-Haraux-book} which deals with the Dirichlet boundary condition
 on  $(0,T)\times \partial\Omega$.
 From \eqref{fleqbarf}, \eqref{Duhamel-bar-f}, and \eqref{effect-reg-infty}, we have
\begin{gather}
\label{esti-f-Linfty}
\norm{f(t)}_{L^\infty}\leq Ct^{-\frac{1}{r}}( \norm{F^0}_{L^r}+1).
\end{gather}
Similarly,
\begin{gather}
\label{esti-hatf-infty}
\norm{\hat f(t)}_{L^\infty}\leq Ct^{-\frac{1}{r}}( \norm{F^0}_{L^r}+1).
\end{gather}
From \eqref{f-hatf} with $p=1$, \eqref{m-hatm} with $p=1$, \eqref{upper-bound-M-1}, \eqref{esti-f-Linfty}, and \eqref{esti-hatf-infty},  we get
\begin{gather}
\label{upper-bound-M-2}
\mathcal{M}(t)\leq C\left( \norm{F^0}_{L^r}+1  \right) T^{\frac{r-1}{r}}\norm{(e-\hat e,m_s-\hat m_s)}_{\mathcal{C}^0_TL_{\Omega}^1L_{\Omega}^1}.
\end{gather}

Concerning the estimate of $\norm{E-\hat{E}}_{\mathcal{C}^0_TL_{\Omega}^1}$ , it is almost  done in the proof of \eqref{esti-E-hatE}.
The only essential new terms that we have to bound from above come from the dynamics of $m_s$ and $\hat m_s$ in the Allee effect expressions $\eta m/(1+\eta (m+m_s))$, $\eta \hat m/(1+\eta (\hat m+\hat m_s))$. These terms are
\begin{gather}
\label{defH1}
H_1(t): =  \int_0^t\left(\int_\Omega   E^0(x)f(\tau,x) \left\vert m_s(\tau,x)-\hat m_s(\tau,x)\right\vert     dx \right) d\tau,
\\
\label{defH2}
H_2(t):=\int_0^t\int_\Omega   f(s,x)\left(\int_s^tf(\tau,x)
\left\vert m_s(\tau,x)-\hat m_s(\tau,x)\right\vert d\tau   \right) dx ds.
\end{gather}
We have already bounded from above $H_1$ in the proof of \eqref{upper-bound-M-2}. (Note that $0\leq E^0(x)\leq \mathcal{E}/2$). Let us bound from above $H_2(t)$. Using \eqref{esti-f-Linfty} for $r=1$ (this inequality also holds for $r=1$), we get
\begin{gather}
\label{estimate-H2}
H_2(t)\leq C\int_0^t\norm{f(s)}_{L^1}\ln\left(\frac{t}{s}\right)ds  \left(\norm{F^0}_{L^1}+1\right)\norm{m_s-\hat{m_s}}_{\mathcal{C}^0_TL_{\Omega}^1},
\end{gather}
which, with \eqref{fleqbarf} and \eqref{estimateL2barf} for $p=1$, leads to
\begin{gather}
\label{estimate-H2_2}
H_2(t)\leq CT  \left(\norm{F^0}_{L^1}+1\right)^2\norm{m_s-\hat{m_s}}_{\mathcal{C}^0_TL_{\Omega}^1}.
\end{gather}
At the end, we obtain
\begin{gather}
\label{esti-E-hatE-with-s}
    \norm{E-\hat{E}}_{\mathcal{C}^0_TL_{\Omega}^1}\leq  C \left(\norm{F^0}_{L^r}^2+1\right)  T^\frac{r-1}{r} \left(\norm{e-\hat{e}}_{\mathcal{C}^0_TL_{\Omega}^1}+\norm{m_s-\hat{m_s}}_{\mathcal{C}^0_TL_{\Omega}^1}\right).
\end{gather}
 From \eqref{defcalM}, \eqref{upper-bound-M-2}, and \eqref{esti-E-hatE-with-s}, we get
\begin{gather}
\label{esti-E-hatE-Ms-hatMs}
 \norm{(E-\hat E,M_s-\hat M_s)}_{\mathcal{C}^0_TL_{\Omega}^1L_{\Omega}^1} \leq C\left( \norm{F^0}_{L^r}^2+1  \right) T^{\frac{r-1}{r}}\norm{(e-\hat e,m_s-\hat m_s)}_{\mathcal{C}^0_TL_{\Omega}^1L_{\Omega}^1},
\end{gather}
which shows that $\tilde{\mathcal{Q}}$ is a contraction map if $T>0$ is small enough. 
This concludes the proof of the uniqueness of the weak solution of the Cauchy problem \eqref{eq:Mosquito-sit-u} and the existence of a weak solution if $T>0$ is small enough.

To get the existence of the weak solution for all time, it suffices to see that if the weak solution is not defined for all time, there exists $\bar T>0$ and a weak solution $(E,F,M,M_s)^T\in C^0([0,\bar T);L^1(\Omega)^4)$  of the Cauchy problem \eqref{eq:Mosquito-sit-u} such that
\begin{gather}
\label{blow-up-s}
\limsup_{t\rightarrow \bar T^-} \norm{F(t)}_{L^r} =+\infty.
\end{gather}
However, using the second, fifth, and sixth lines of (4.5) and Duhamel's formula, we get
\begin{gather}
\bar f(t) = S_1(t)F^0 + \nu\nu_E  \int_0^t S_1(t-s)\bar E^0\, ds \; \forall t \in[0,\bar T),
\end{gather}
which gives the existence $C(\bar T)\in (0,+\infty)$ such
\begin{gather}
\norm{F(t)}_{L^r}\leq C(\bar T)(\norm{F^0}_{L^r} + 1)\; \forall t \in [0,\bar T),
\end{gather}
in contradiction with \eqref{blow-up-s}. This concludes the proof of Theorem~\ref{th-well-posed-EFM}.
\hfill \qedsymbol

\section{Proof of the existence of a weak solution}
\label{existenceinL1}
In this section, we prove the existence of a weak solution for initial data in $L^1$. Here, we focus solely on the $4$-dimensional closed-loop Cauchy problem \eqref{eq:Mosquito-sit-u}, which is the most challenging case. Let $(u_n)_{n\in\mathbb{N}}$ be a sequence of elements of $C^0([0,+\infty)^4;[0,+\infty))$ such that, for every $n\in \mathbb{N}$
\begin{gather}
\label{Ugeq0-4var-n}
0 \leq u_n((E,F,M,M_s)^T) \leq C_u(1+E+F+M+M_s) \; \forall (E,F,M,M_s)^T \in [0,+\infty)^4,
\\
u_n(y)=0,\;\forall y=(E,F,M,M_s)^T\in [0,+\infty)^4 \text{ such that } E+F+M+M_s\geq n,
\label{uncompactsupport}
\end{gather}
and such that
\begin{gather}
\label{untendversu}
\lim_{n\rightarrow +\infty}\|u_n-u\|_{C^0(\mathcal{K})}=0 \text{ for every compact set } \mathcal{K}\subset [0,+\infty)^4.
\end{gather}
Clearly such a sequence $(u_n)_{n\in\mathbb{N}}$ exists. Let $(E^0,F^0,M^0,M_s^0)^T:\Omega \rightarrow [0,+\infty)^4$ be such that \eqref{assump-init-L1-Ms} holds. Then, let $((E^0_n, F^0_n, M^0_n, M^0_{sn})^T )_{n\in\mathbb{N}}$ be a sequence of maps from $\Omega$ into $[0,+\infty)^4$ such that, for every $n\in\mathbb{N}$,
\begin{gather}
E^0_n \in L^\infty(\Omega), \; F^0_n \in L^{r_n}(\Omega) \; \text{for some} \; r_n > 1, \; M^0_n \in L^1(\Omega), \; \text{and} \; M_{sn}^0 \in L^1(\Omega),
\end{gather}
and such that, as $n\rightarrow +\infty$,
\begin{gather}
E_n^0 \to E^0 \quad \text{in} \; L^1(\Omega), \label{E0nC} \\
    F_n^0 \to F^0 \quad \text{in} \; L^1(\Omega), \label{Fn0toF0} \\
    M_n^0 \to M^0 \quad \text{in} \; L^1(\Omega), \label{Mn0toM0} \\
    M_{sn}^0 \to M_s^0 \quad \text{in} \; L^1(\Omega). \label{Msn0toMs0}
\end{gather}
Again, such a sequence $((E^0_n, F^0_n, M^0_n, M^0_{sn})^T) _{n\in\mathbb{N}}$ exists.
Let $T>0$. For $n\in \mathbb{N}$, let $y_n = (E_n, F_n, M_n, M_{sn})^T$ be the weak solution on $[0,T]$ of the following closed-loop system:
\begin{align}
    & \frac{\partial E_n}{\partial t} = \beta_E F_n\left(1 - \frac{E_n}{K(x)}\right) \frac{\eta M_n}{1+\eta (M_n+\gamma M_{sn})} - (\nu_E + \delta_E)E_n - (\nu_E + \delta_E)E_n, \quad t \geq 0, \; x \in \Omega, \label{En} \\
    & \frac{\partial F_n}{\partial t} - d_1 \Delta F_n + \delta_F F_n = \nu \nu_E E_n, \quad t \geq 0, \; x \in \Omega, \label{Fn} \\
    & \frac{\partial M_n}{\partial t} - d_2 \Delta M_n + \delta_M M_n = (1 - \nu)\nu_E E_n, \quad t \geq 0, \; x \in \Omega, \label{Mn} \\
    & \frac{\partial M_{sn}}{\partial t} - d_3 \Delta M_{sn} + \delta_s M_{sn} = u(E_n, F_n, M_n, M_{sn}), \quad t \geq 0, \; x \in \Omega, \label{Msn} \\
    & \frac{\partial F_n}{\partial n} = \frac{\partial M_n}{\partial n} = \frac{\partial M_{sn}}{\partial n} = 0, \quad t \geq 0, \; x \in \partial \Omega, \label{condbordFMMsn}
\end{align}
satisfying the initial condition
\begin{gather}
\label{yn0=}
y_n(0)=(E^0_n, F^0_n, M^0_n, M^0_{sn})^T.
\end{gather}
The existence (and uniqueness) of $y_n$ is proved in Appendix~\ref{Appendix-B}.

Note that, almost everywhere in $Q_T$,
\begin{gather}\label{EleqKE0}
    E_n(t, x) \leq \max(K(x), E_n^0(x)) \leq K(x) + E_n^0(x).
\end{gather}
Since $K \in L^1(\Omega)$, using \eqref{E0nC} and \eqref{EleqKE0}, one has
\begin{gather}\label{EnLinftyL1}
    \|E_n(t)\|_{L^1(\Omega)} \leq C \; \forall t \in [0,T].
\end{gather}
In \eqref{EnLinftyL1} and throughout all Appendix~\ref{existenceinL1},  $C$ denotes various positive constants which may vary form place to place but are independent $n\in \mathbb{N}$ and $t\in[0,T]$. However, even if it is not the case for \eqref{EnLinftyL1}, they may depend on $T$,  which is fixed in this proof. From \eqref{eqintF-V} with $F=F^n$ and $E=E^n$, \eqref{Fn0toF0}, \eqref{yn0=}, and  \eqref{EnLinftyL1}
\begin{gather}
\label{FnLinftyL1}
\|F_n(t)\|_{L^1(\Omega)}\leq C\; \forall t \in [0,T].
\end{gather}
Similarly, one has
\begin{gather}
\label{MnLinftyL1}
\|M_n(t)\|_{L^1(\Omega)}\leq C\; \forall t \in [0,T].
\end{gather}
Using \eqref{Ugeq0-4var-n}, \eqref{EnLinftyL1}, \eqref{FnLinftyL1}, and  \eqref{MnLinftyL1},
\begin{gather}
\label{majorationunMsn}
\int_\Omega u_n((E_n,F_n,M_n,M_{sn}^T)(t)\leq C \left(1+\int_\Omega M_{sn}(t)\right).
\end{gather}
Using \eqref{eqintMs} for $\varphi=1$, $M_s=M_{sn}$ , $u((E,F,M,M_s)^T)=u_n((E_n,F_n,M_n,M_{sn}^T)$, \eqref{majorationunMsn}, and $M_s^0=M_{sn}^0$, together with \eqref{Msn0toMs0}, \eqref{majorationunMsn} and the Gronwall lemma in the integral form,
\begin{gather}
\label{MsnLinftyL1}
\|M_{sn}(t)\|_{L^1(\Omega)} \leq C\; \forall t\in [0,T].
\end{gather}

Using \eqref{Fn0toF0}, \eqref{Fn}, \eqref{condbordFMMsn}, \eqref{EnLinftyL1} and a classical compactness property (see, for example, \cite[Lemma 5.6]{2010-Pierre-MJM}), we deduce the existence of $F\in L^1((0, T); W^{1,1}(\Omega))$ such that, up to the extraction of a subsequence,
\begin{gather}
    F_n \rightarrow F \text{ in } L^1((0, T); W^{1,1}(\Omega)) \text{ as } n \rightarrow +\infty. \label{FnF}
\end{gather}
Similarly, one has the existence of $M\in L^1((0, T); W^{1,1}(\Omega))$ and $M_s\in L^1((0, T); W^{1,1}(\Omega))$ such that, up to the extraction of subsequences,
\begin{gather}
    M_n \rightarrow M \text{ in } L^1((0, T); W^{1,1}(\Omega)) \text{ as } n \rightarrow +\infty, \label{MnM} \\
    M_{sn} \rightarrow M_s \text{ in } L^1((0, T); W^{1,1}(\Omega)) \text{ as } n \rightarrow +\infty. \label{MsnMs}
\end{gather}
Note that \eqref{FnF}, \eqref{MnM}, and \eqref{MsnMs} imply, up to the extraction of a subsequence,
\begin{align}
&  F_n(t,x) \rightarrow  F(t,x) \text{ as } n \rightarrow +\infty, \text{ for almost every } (t,x)\in(0,T)\times\Omega, \label{FnFpointwise} \\
&    M_n(t,x) \rightarrow  M(t,x) \text{ as } n \rightarrow +\infty, \text{ for almost every } (t,x)\in(0,T)\times\Omega, \label{MnMpointwise}
    \\
&     M_{sn}(t,x) \rightarrow  M_s(t,x) \text{ as } n \rightarrow +\infty, \text{ for almost every } (t,x)\in(0,T)\times\Omega, \label{MsnMpointwise}
\end{align}
and the existence of $\Phi\in L^1((0,T)\times \Omega)$ such that, for every $n\in \mathbb{N}$,
\begin{align}
&    F_n(t,x) \leq \Phi (t,x) \text{ for almost every } (t,x)\in(0,T)\times\Omega, \label{Fndominated} \\
&   M_n(t,x) \leq \Phi (t,x)  \text{ for almost every } (t,x)\in(0,T)\times\Omega, \label{Mndominated} \\
&   M_{sn}(t,x) \leq \Phi (t,x)  \text{ for almost every } (t,x)\in(0,T)\times\Omega. \label{Msndominated}
\end{align}
In particular, there exists a set $\mathcal{N}$ of Lebesgue measure $0$  such that, for every $t\in [0,T]\setminus \mathcal{N}$,
\begin{align}
&  F_n(t,x) \rightarrow  F(t,x) \text{ as } n \rightarrow +\infty, \text{ for almost every } x\in\Omega, \label{FnFpointwise-minusN} \\
&    M_n(t,x) \rightarrow  M(t,x) \text{ as } n \rightarrow +\infty, \text{ for almost every } x\in\Omega, \label{MnMpointwise-minusN}
    \\
&     M_{sn}(t,x) \rightarrow  M_s(t,x), \label{MsnMpointwise-minusN}
\\
&    F_n(t,x) \leq \Phi (t,x) \text{ for almost every } x \in \Omega, \label{Fndominated-minusN} \\
&   M_n(t,x) \leq \Phi (t,x)  \text{ for almost every } x \in \Omega, \label{Mndominated-minusN} \\
&   M_{sn}(t,x) \leq \Phi (t,x)  \text{ for almost every } x \in \Omega. \label{Msndominated-minusN}
\end{align}
Let
\begin{align}
&A_n(t,x):=\beta_EF_n(t,x)\frac{\eta M_n(t,x)}{1+\eta (M_n(t,x)+\gamma M_{sn}(t,x))},\label{defan-appD}\\
&A(t,x):= \beta_EF(t,x)\frac{\eta M(t,x)}{1+\eta (M(t,x)+\gamma M_{s}(t,x))},\label{defa-appD}\\
&B_n(t,x):=\frac{\beta_E}{K(x)} F_n(t,x)\frac{\eta M_n(t,x)}{1+\eta (M_n(t,x)+\gamma M_{sn}(t,x))} + (\nu_E+\delta_E), \label{defbn-AppD}\\
&B(t,x):=\frac{\beta_E}{K(x)} F(t,x)\frac{\eta M(t,x)}{1+\eta (M(t,x)+\gamma M_{s}(t,x))} + (\nu_E+\delta_E).  \label{defb-AppD}
\end{align}
From our definition of $y_n$, one has
\begin{gather}
\label{expression-explicit-En}
E_n(t,x)=e^{-\int_0^tB_n(s,x))\;ds}E^0_n(x)+\int_0^te^{-\int_s^tB_n(\tau,x)\;d\tau}A_n(s,x)\;ds.
\end{gather}
Let
\begin{gather}
\label{expression-explicit-E-new}
E(t,x)=e^{-\int_0^tB(s,x))\;ds}E^0(x)+\int_0^te^{-\int_s^tB(\tau,x)\;d\tau}A(s,x)\;ds.
\end{gather}
Note that
\begin{gather}
\label{EC0L1}
E\in C^0([0,T];L^1(\Omega)). \end{gather}
From \eqref{E0nC}, extracting a subsequence if necessary,
\begin{gather}
\label{En0pointwise}
E_n^0(x)\rightarrow E^0(x)\text{ for almost every } x\in \Omega
\end{gather}
and there exists $\Psi: \Omega \rightarrow [0,+\infty)$  in $L^1(\Omega)$ such that
\begin{gather}
\label{En0dominated}
E_n^0(x)\leq \Psi(x) \text{ for almost every } x\in \Omega.
\end{gather}
 From \eqref{FnFpointwise} to \eqref{Msndominated}, \eqref{defan-appD} to \eqref{expression-explicit-E-new}, \eqref{En0pointwise}, and \eqref{En0dominated},
\begin{gather}
\label{EncvpointwiseE}
E_n(t,x) \rightarrow  E(t,x) \text{ for almost every } (t,x)\in(0,T)\times\Omega, \text{ as } n \rightarrow +\infty
\end{gather}
and there exists $\Psi_1: \Omega \rightarrow [0,+\infty)$  in $L^1(\Omega)$ such that, for every $n\in \mathbb{N}$,
\begin{gather}
\label{Endominated}
E_n(t,x)\leq \Psi_1(x) \text{ for almost every } x\in \Omega.
\end{gather}

Let $\varphi : [0,T]\times \bar \Omega\rightarrow \RR$ be of class $\mathcal{C}^1$. Let $t\in [0,T]$. From our definition of $y_n$, one has
\begin{multline}
\label{eqintFn}
\int_\Omega F_n(t,x)\varphi(t,x)\;dx-\int_\Omega F^0_n(x)\varphi(0,x)\;dx
+d_1\int_{Q_t}\nabla F_n \cdot \nabla \varphi -
\int_{Q_t} F_n\frac{\partial \varphi}{\partial t}=
\\
\int_{Q_t}\left(\nu\nu_E E_n - \delta_FF_n\right)\varphi.
\end{multline}
Letting $n\rightarrow +\infty$ in \eqref{eqintFn} and using \eqref{Fn0toF0}, \eqref{FnF}, \eqref{FnFpointwise-minusN}, \eqref{Fndominated-minusN}, \eqref{EncvpointwiseE}, and \eqref{En0dominated}, one gets
\begin{gather}
\text{\eqref{eqintF} holds provided that $t\in [0,T]\setminus \mathcal{N}$.}
\end{gather}
Assume for the moment that the following lemma holds.
\begin{lemma}
\label{lem-very-weak}
Let $\mathcal{N}\subset [0,T]$ be of Lebesgue measure $0$. Let $f\in   L^1((0, T); W^{1,1}(\Omega))$ be such that, for every $\varphi : [0,T]\times \bar \Omega\rightarrow \RR$ of class $\mathcal{C}^1$,
\begin{gather}
\label{intf=0}
\int_\Omega f(t,x)\varphi(t,x)\;dx
+d_1\int_{Q_t}\nabla f \cdot \nabla \varphi -
\int_{Q_t} f \frac{\partial \varphi}{\partial t}+\delta_F\int_{Q_t}F\varphi= 0\; \forall t\in [0,T]\setminus \mathcal{N}.
\end{gather}
Then $f=0$.
\end{lemma}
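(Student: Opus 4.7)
The strategy is a duality argument: the hypothesis \eqref{intf=0} says that $f$ is a very weak solution of the homogeneous parabolic problem $\partial_t f - d_1\Delta f + \delta_F f = 0$ in $Q_T$ with Neumann boundary condition and zero initial data (the absence of an $\int_\Omega f^0\,\varphi(0,\cdot)$ term encodes $f(0,\cdot) = 0$), so to extract pointwise information I would test against smooth solutions of the corresponding backward adjoint equation. Fix a $\tau \in (0,T) \setminus \mathcal{N}$ such that $f(\tau,\cdot)$ is defined as an element of $L^1(\Omega)$ (a set of full measure by Fubini), and pick an arbitrary $\varphi_T \in C_c^\infty(\Omega)$. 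The goal is to deduce $\int_\Omega f(\tau,\cdot)\varphi_T = 0$; since $C_c^\infty(\Omega)$ separates $L^1(\Omega)$, this forces $f(\tau,\cdot) = 0$ a.e., and letting $\tau$ range over the full-measure set concludes $f \equiv 0$ a.e. in $Q_T$.

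The main construction is an admissible test function $\varphi \in C^1([0,T]\times\bar\Omega)$ adapted to $\tau$ and $\varphi_T$. Let $\psi:[0,\tau]\times\bar\Omega\to\RR$ be the classical solution of the backward problem $-\partial_t\psi - d_1\Delta\psi + \delta_F\psi = 0$ with $\partial_n\psi=0$ and $\psi(\tau,\cdot) = \varphi_T$. Reversing time gives a forward heat equation with smooth data $\varphi_T$ vanishing near $\partial\Omega$, so standard parabolic regularity supplies $\psi \in C^\infty([0,\tau]\times\bar\Omega)$ with $\partial_n\psi \equiv 0$. Extend by a first-order Taylor development $\varphi(t,x) := \psi(\tau,x) + (t-\tau)\partial_t\psi(\tau,x)$ for $t \in [\tau,T]$: this is smooth on $[\tau,T]\times\bar\Omega$ and matches $\psi$ in value and time derivative at $t=\tau$, so $\varphi \in C^1([0,T]\times\bar\Omega)$.

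Plugging this $\varphi$ into \eqref{intf=0} at time $t=\tau$ and using that $\psi$ is smooth with $\partial_n\psi=0$ while $f(s,\cdot)\in W^{1,1}(\Omega)$ for a.e.\ $s$, Green's identity yields $\int_{Q_\tau}\nabla f \cdot \nabla\psi = -\int_{Q_\tau} f\,\Delta\psi$ (the $\partial\Omega$-trace integral vanishes). Collecting terms reduces the $Q_\tau$-contribution to $\int_{Q_\tau} f\,(-\partial_t\psi - d_1\Delta\psi + \delta_F\psi) = 0$ by the equation for $\psi$, leaving only $\int_\Omega f(\tau,x)\varphi_T(x)\,dx = 0$, which is exactly the needed identity.

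The main obstacle I anticipate is the space integration by parts with merely $f \in L^1((0,T);W^{1,1}(\Omega))$: this relies on the continuous trace $W^{1,1}(\Omega)\to L^1(\partial\Omega)$ on the class of domains considered (regular open subsets of $\RR^2$), which makes $\int_0^\tau\!\int_{\partial\Omega}\mathrm{tr}(f)\,\partial_n\psi$ well-defined and equal to zero pointwise in $t$. The remaining steps—building the smooth backward adjoint solution and its $C^1$ extension past $\tau$—are standard parabolic theory.
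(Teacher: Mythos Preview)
Your duality argument is correct: you build a smooth solution $\psi$ of the backward adjoint problem with final datum $\varphi_T\in C_c^\infty(\Omega)$, extend it past $t=\tau$ so as to obtain an admissible $C^1$ test function on all of $[0,T]\times\bar\Omega$, integrate by parts in space (legitimate since $\partial_n\psi=0$ and the trace $W^{1,1}(\Omega)\to L^1(\partial\Omega)$ is continuous on the regular domains considered), and conclude $\int_\Omega f(\tau,\cdot)\varphi_T=0$ for every $\tau\notin\mathcal{N}$ for which $f(\tau,\cdot)\in L^1(\Omega)$.

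The paper takes a genuinely different, and shorter, route. Instead of testing against adjoint solutions, it sets $g(t,x):=\int_0^t f(s,x)\,ds$, observes that $g\in C^0([0,T];L^1(\Omega))$, and checks (by integrating the identity \eqref{intf=0} in $t$) that $g$ satisfies the \emph{same} weak formulation, but now for \emph{every} $t\in[0,T]$ rather than only for $t\notin\mathcal{N}$. Thus $g$ is a weak solution, in the sense of Definition~\ref{def-sol-EFM}, of the homogeneous linear heat equation with Neumann boundary condition and zero initial datum; uniqueness for that linear problem (via the $L^1$ semigroup $S_1$) then forces $g\equiv 0$, hence $f\equiv 0$. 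In short, the paper upgrades the ``very weak'' identity (holding only off $\mathcal{N}$) to a genuine weak one by time–integration, and then invokes linear uniqueness as a black box. Your approach is more self-contained and exposes the mechanism directly, at the cost of constructing and extending the adjoint solution; the paper's approach is slicker but presupposes the $L^1$ uniqueness already used elsewhere in the well-posedness analysis.
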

Let
\begin{gather}
\label{expressionF}
\bar F(t)=S_1(t)F_0+\int_0^tS_1(t-s)E(s)\, ds,
\end{gather}
where $S_1$ is defined in Appendix~\ref{app-well-posed}; see, in particular, \eqref{def-S1}. Note $f:=F-\bar F$ satisfies \eqref{intf=0}. As a consequence of Lemma~\ref{lem-very-weak}, $F=\bar F \in C^0[0,T];L^1(\Omega)$ and, by density and the continuity with respect to $t$ of all the terms of  \eqref{eqintF}, one has  \eqref{eqintF} for every $t\in [0,T]$.
Similar proofs show that $M$ and $M_s$ are in $C^0([0,T];L^1(\Omega))$ and that
\eqref{eqintM} and \eqref{eqintMs} holds for every $t\in [0,T]$. Hence $(E,F,M,M_s)^T$ is a weak solution of the closed-loop Cauchy problem \eqref{eq:Mosquito-sit-u} for the initial condition $(E(0),F(0),M(0),M_s(0))^T=(E^0,F^0,M^0,M_s^0)^T$.

It remains to prove Lemma~\ref{lem-very-weak}. Let $g\in L^1((0, T); W^{1,1}(\Omega))$ be defined by
\begin{gather}
g(t,x):=\int_0^t f(s,x)\; ds.
\end{gather}
One easily sees that $g\in C^0([0,T];L^1(\Omega))$ and is such that,
for every $\varphi : [0,T]\times \bar \Omega\rightarrow \RR$ of class $\mathcal{C}^1$,
\begin{gather}
\label{intg=0}
\int_\Omega g(t,x)\varphi(t,x)\;dx
+d_1\int_{Q_t}\nabla g \cdot \nabla \varphi -
\int_{Q_t} g \frac{\partial \varphi}{\partial t}+\delta_F\int_{Q_t}g\varphi= 0\; \forall t\in [0,T],
\end{gather}
which implies that $g=0$ and therefore $f=0$. This ends the proof of Lemma~\ref{lem-very-weak}.

Hence, for every initial data $y^0:\Omega \rightarrow [0,+\infty)^4$  in $L^1(\Omega)^4$  and for every $T > 0$, there exists a weak solution on $[0,T]$ of the closed-loop Cauchy problem \eqref{eq:Mosquito-sit-u}. This existence can be extended to $[0, +\infty)$. Indeed, let $y^0:\Omega \rightarrow [0,+\infty)^4$ be in $L^1(\Omega)^4$ and let $T_1=1$. There exists a weak solution $y_1 \in C^0([0,T_1]; L^1(\Omega))^4$ of the closed-loop Cauchy problem \eqref{eq:Mosquito-sit-u} on $[0,T_1]$ for the initial condition $y^0$. Let $T_2=T_1+1$. We now start at $T_1$ with the initial condition $y_1(T_1)$. There exists a weak solution $y_2$ on $[T_1,T_2]$ of the closed-loop Cauchy problem \eqref{eq:Mosquito-sit-u} for this initial condition. Let $y\in C^0([0,T_2];L^1(\Omega))$ be defined by
\begin{gather}
y(t)=y_1(t) \text{ for } t\in [0,T_1) \text { and } y(t)=y_2(t) \text{ for } t\in [T_1,T_2].
\end{gather}
Then, as one easily checks, $y$ is a weak solution  of the closed-loop Cauchy problem \eqref{eq:Mosquito-sit-u} on $[0,T_2]$ for the initial condition $y^0$. We keep going: consider $T_n=n$ and finally we get the existence of a weak solution  of the closed-loop Cauchy problem \eqref{eq:Mosquito-sit-u} on $[0,+\infty)$ for the initial condition $y^0$.

\bibliographystyle{plain}
\bibliography{biblio}

\end{document}